\newcommand{\B}[1]{\mathbb{#1}}
\newcommand{\C}[1]{\mathcal{#1}}
\newcommand{\F}[1]{\mathfrak{#1}}
\newtheorem{theorem}[subsection]{Theorem}
\newtheorem{corollary}[subsection]{Corollary}
\newtheorem{lemma}[subsection]{Lemma}
\newtheorem{proposition}[subsection]{Proposition}
\theoremstyle{definition}
\newtheorem{definition}[subsection]{Definition}
\newtheorem{example}[subsection]{Example}
\theoremstyle{remark}
\newtheorem{remark}[subsection]{Remark}
\newtheorem*{remarknonum}{Remark}
\numberwithin{figure}{section}
\numberwithin{table}{section}
\numberwithin{equation}{section}
\def\conj{\operatorname{conj}}
\def\defeq{\overset{\text{def}}{=}}
\def\diam{\operatorname{diam}}
\def\PSL{\operatorname{PSL}}
\def\SL{\operatorname{SL}}
\def\ZZ{\mathbb{Z}}
\def\defeq{\overset{\text{def}}{=}}
\def\lAngle{\langle\langle}
\def\rAngle{\rangle\rangle}
\newcommand{\OP}{\operatorname}
\def\Ad{\OP{Ad}}
\def\GL{\OP{GL}}
\def\Nil{\OP{Nil}}
\def\Knapp{MR1920389}
\def\HigNeeb{MR3025417}
\newcommand{\xto}[1]{\xrightarrow{#1}}
\begin{document}

\title{Strong and uniform boundedness of groups}
\author{Jarek K\k{e}dra}
\address{Institute of Mathematics,
University of Aberdeen,
King's College,
Fraser Noble Building,
Aberdeen AB24 3UE,
United Kingdom, and University of Szczecin}
\email{kedra@abdn.ac.uk}
\author{Assaf Libman}
\address{Institute of Mathematics,
University of Aberdeen,
King's College,
Fraser Noble Building,
Aberdeen AB24 3UE,
United Kingdom}
\email{a.libman@abdn.ac.uk}
\author{Ben Martin}
\address{Institute of Mathematics,
University of Aberdeen,
King's College,
Fraser Noble Building,
Aberdeen AB24 3UE,
United Kingdom}
\email{b.martin@abdn.ac.uk}

\subjclass[2010]{Primary 20B07; Secondary 58D19}

\begin{abstract}
A group $G$ is called {\em bounded} if every conjugation-invariant norm on $G$
has finite diameter.  We introduce various strengthenings of this property and
investigate them in several classes of groups including semisimple Lie groups,
arithmetic groups and linear algebraic groups. We provide applications to
Hamiltonian dynamics. 
\end{abstract}

\maketitle


\section{Introduction and statements of results} \label{Sec:Introduction}
Conjugation-invariant norms on groups appear in various branches of
mathematics including Hamiltonian dynamics (the Hofer norm), finite groups
(covering numbers), geometric group theory (verbal norms) and others.
Bu\-ra\-go, Ivanov and Polterovich introduced the concept of a {\it bounded
group} \cite{MR2509711}: that is, a group for which every conjugation-invariant
norm has finite diameter. 

A subset $S$ of a group $G$ {\em normally generates} it if $G$ is the normal subgroup generated by $S$.
We say that $G$ is {\em finitely normally generated} if it admits a finite normally generating set $S$.
Such $S$ gives rise to a word norm $\| \cdot \|_S$ on $G$, where $\|g\|_S$ is the length of the shortest word in the conjugates of the elements of $S$ and their inverses needed to express $g$.
By construction, $\| \cdot \|_S$ is conjugation-invariant.
We will write $\|G\|_S$ for the diameter of $G$ with respect to the norm $\| \cdot \|_S$; see Section \ref{S:preliminaries} for more details.

If $G$ is finitely normally generated then being bounded is equivalent to all the word norms on $G$ having finite diameter (Corollary \ref{C:tfae bounded}). 
In light of this, the purpose of this paper is to refine the notion of boundedness of word norms and study consequences of such refinements.

\subsection*{Strong and uniform boundedness}

Let $G$ be finitely normally generated.
For any $k \geq 1$ define
\[
\Delta_k(G) = \sup \{ \| G\|_S \ : \ \text{$S$ normally generates $G$ and $|S| \leq k$}\}
\]
with the convention that $\sup \emptyset=-\infty$.
It is clear that $\Delta_1(G) \leq \Delta_2(G) \leq \dots$ and the limit of this sequence is
\[
\Delta (G) = \{ \| G\|_S \ : \ \text{$S$ is a finite normally generating set of $G$} \}.
\]

\begin{definition}
A finitely normally generated group $G$ is called {\em strongly bounded} if $\Delta_k(G)<\infty$ for all $k$.
It is called {\em uniformly bounded} if $\Delta(G)<\infty$.
\end{definition}

We remark that our definition of strong boundedness is unrelated to those of Cornulier \cite{MR2240370} and Le Roux-Mann \cite{leroux-mann}.  
Given Corollary \ref{C:tfae bounded} below, within the class of finitely normally generated groups there are inclusions
\[
\{\text{uniformly bounded}\} \subseteq \{\text{strongly bounded}\} \subseteq
\{\text{bounded}\}.
\]
One goal of the paper is to give examples and study the properties of groups in these classes.
The next theorem shows that simple Lie groups with finite center provide examples of groups at the two extremes.

\begin{theorem}[Theorem \ref{T:Lie groups boundedness properties}]
\label{T:Intro Lie groups}
Let $G$ be a semisimple Lie group.  
Then  $G$ is finitely normally generated, and
\begin{enumerate}[label=(\alph*)]
\item $G$ is bounded if and only if  $Z(G)$ is finite. 
\end{enumerate}
If $Z(G)$ is finite then the following hold. 
\begin{enumerate}[label=(\alph*)]
\setcounter{enumi}{1}
\item 
If $G/Z(G)$ has a non-trivial compact factor then $G$ is bounded but not strongly bounded. 

\item 
If $G/Z(G)$ has no non-trivial compact factors then $G$ is uniformly bounded. 
\end{enumerate}
\end{theorem}



In the non-compact case it is possible to find an explicit upper bound for
$\Delta(G)$ which only depends on $\OP{rank} G$. We will do this in a forthcoming
paper.  The next result provides another family of uniformly bounded groups.

\begin{theorem}[Proposition \ref{P:alggp} and Theorem \ref{T:alggp_unif_bd}]
Every linear algebraic group with finite abelianization over an algebraically
closed field is uniformly boun\-ded.  \end{theorem}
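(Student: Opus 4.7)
The plan is to reduce the problem via the standard structure theory of linear algebraic groups to a bound on semisimple groups over $\overline{k}$, where one can appeal to conjugacy-class covering theorems.

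\textbf{Structural reduction.} First I would reduce to the connected case: since $G^\circ$ has finite index in $G$, any finite normally generating set $S$ of $G$ maps onto a generating set of the finite quotient $G/G^\circ$, so each element of $G$ is a product of at most $|G/G^\circ|$ conjugates of elements of $S$ times an element of $G^\circ$; it then suffices to bound $\Delta(G^\circ)$. The identity component inherits finite abelianization, and the short exact sequence $1 \to R_u(G^\circ) \to G^\circ \to G^\circ/R_u(G^\circ) \to 1$ shows that the reductive quotient has finite abelianization too. Because the abelianization of a connected reductive group is a torus up to a finite group, this forces the reductive quotient to be semisimple.

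\textbf{Semisimple core.} The heart of the argument is to establish $\Delta(L) < \infty$ for a semisimple linear algebraic group $L$ over $\overline{k}$. The key input is the covering-number theorem for simple algebraic groups: for every simple $H$ over $\overline{k}$ there is a constant $c(H)$ such that every non-central conjugacy class $C \subseteq H$ satisfies $C^{c(H)} \supseteq H \setminus Z(H)$ (Ellers--Gordeev, Liebeck--Shalev and successors). Given any finite normally generating set $S$ for $L$, for each simple factor of the adjoint quotient $L/Z(L)$ some element of $S$ must project non-trivially to that factor, and then boundedly many conjugates of that element suffice to cover the factor. Coordinating this across the finitely many simple factors of $L$, and sweeping up the finite center by a bounded number of extra commutators, gives a bound $\Delta(L) \leq D(L)$ independent of $S$.

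\textbf{Handling the unipotent radical.} To pass from $L$ back to $G^\circ$, one must show that $U := R_u(G^\circ)$ contributes only a bounded number of steps. The group $U$ admits a characteristic filtration $U = U_1 \supseteq U_2 \supseteq \cdots \supseteq U_r = 1$ whose successive quotients are rational $L$-modules (via conjugation). For any $s \in S$ non-central in the semisimple quotient, commutators $[s,u]$ with $u$ ranging over coset representatives produce non-zero vectors in each $L$-module quotient, and by semisimplicity of rational representations in sufficiently nice settings (or by a direct root-space analysis) boundedly many $L$-conjugates of such commutators span each $U_i/U_{i+1}$; the number required depends only on the representation-theoretic data, not on $|S|$.

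\textbf{Main obstacle.} The hardest step will be the semisimple case, specifically coordinating a single uniform bound across multiple simple factors and the finite center at once. Covering bounds within a single simple factor are well established, but for an arbitrary $S$ different elements may be responsible for different factors, and the elements of $S$ are entangled across factors. One must therefore extract from $S$ a bounded-size, well-structured subset whose conjugates reach each factor independently, which requires a decoupling argument combining the factor-wise covering theorems with a method for producing ``single-factor'' conjugates from a generic element of $S$.
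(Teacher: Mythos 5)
Your outline is a genuinely different route from the paper's, but as it stands it has real gaps. The most serious one is the one you name yourself: in the semisimple case you never actually explain how to extract, from an arbitrary normally generating set $S$, elements whose conjugates cover each simple factor with a bound independent of $|S|$ and of how the elements of $S$ are entangled across factors and the finite centre. That ``decoupling'' is the whole content of the statement for semisimple groups, so flagging it as the main obstacle leaves the proof incomplete. Two further steps would fail or need repair. First, the initial reduction ``it suffices to bound $\Delta(G^\circ)$'' is not valid as stated: a normally generating set $S$ of $G$ need not meet $G^\circ$ at all, and knowing $\Delta(G^\circ)<\infty$ says nothing about word length with respect to $S$ unless you first manufacture, inside some $B_S(c)$ with $c$ independent of $S$, a normally generating set of $G^\circ$ — which is again the crux. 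Second, your treatment of the unipotent radical invokes semisimplicity of rational representations, which fails in positive characteristic; the claim that boundedly many conjugates of commutators span each layer $U_i/U_{i+1}$ is exactly the kind of uniform spanning statement that needs proof, not assertion.

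For comparison, the paper avoids all of this structure theory. Its key tool is a quantitative version of the classical generation lemma (Lemma \ref{L:commutator_gen2}): if a closed subgroup is generated by images of morphisms from irreducible varieties through $1$, it equals a product of at most $2\dim G$ of those images and their inverses, by the dimension-increase argument for the closures $\overline{D_m}$. Applying this to the commutator morphisms $y\mapsto [x,y]$ on $G^0$ for $x\in\conj_G(T)$ shows at once that the normal subgroup $N$ they generate is closed, contains $[G,G^0]$ (hence $G^0$, by Proposition \ref{P:alggp}), and lies in $B_T(4\dim G)$; the finite quotient $G/G^0$ then contributes $\Delta(G/G^0)$ extra steps. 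This handles simple factors, the centre, and the unipotent radical simultaneously, with no covering-number theorems and no characteristic restrictions, and yields the explicit bound $\Delta(G)\le 4\dim(G)+\Delta(G/G^0)$. If you want to salvage your approach, the single uniform mechanism you are missing — producing, from any element of $\conj_G(S)$ acting nontrivially on $G^0$, a bounded set of conjugates generating a large closed connected normal subgroup — is precisely what that lemma supplies.
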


Finding strongly bounded groups that are not uniformly bounded is a more difficult challenge.
Application of Corollary \ref{C:Carter Keller implications} to $\C O = \B Z$ gives:

\begin{theorem}\label{T:Intro slnZ}
For any $n \geq 3$ the group $\OP{SL}(n,\B Z)$ is strongly bounded but not uniformly bounded.
\end{theorem}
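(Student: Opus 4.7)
The statement is presented as an application of Corollary \ref{C:Carter Keller implications} with $\C O = \B Z$, so the plan is to establish the two conclusions of that corollary in the case at hand. The two halves---strong boundedness and failure of uniform boundedness---rest on genuinely different inputs, and I treat them separately.

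For strong boundedness ($\Delta_k(G) < \infty$ for each $k$, where $G = \SL(n,\B Z)$), the central tool is the Carter-Keller theorem: for $n \geq 3$ every matrix in $\SL(n,\B Z)$ is a product of at most $c_n$ elementary matrices $E_{ij}(t)$. The argument then has two layers. First, the Steinberg commutator identities $[E_{ik}(1), E_{kj}(t)] = E_{ij}(t)$ (valid for pairwise distinct $i,j,k$, which is precisely where $n \geq 3$ is used) bound $\|E_{ij}(t)\|_{\{E_{12}(1)\}}$ uniformly in $t$ and in $(i,j)$; combined with Carter-Keller this already gives $\|G\|_{\{E_{12}(1)\}} < \infty$. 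The second and main step is to show that for any normally generating set $S$ with $|S| \leq k$, the matrix $E_{12}(1)$ can be written as a conjugation-invariant word in $S$ of length bounded by a function of $k$ alone. This is a ``bounded normal generation'' statement: because every proper normal subgroup of $\SL(n,\B Z)$ is (up to the center) a congruence subgroup, congruence arithmetic together with Carter-Keller can be used to assemble $E_{12}(1)$ from conjugates of a $k$-element normally generating set in a bounded-in-$k$ number of steps. This second step is the main obstacle.

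For the failure of uniform boundedness I would exhibit a sequence of finite normally generating sets $S_m$ with $\|G\|_{S_m} \to \infty$. Since $\SL(n,\B Z)$ is bounded, divergence cannot be extracted from any single conjugation-invariant norm; the sets $S_m$ themselves must grow, $|S_m| \to \infty$, and be designed so that their individually finite word norms form an unbounded sequence. The natural construction draws most generators from deep congruence subgroups, attached to a small fixed core that guarantees normal generation; the hard step is a lower bound on the $\|\cdot\|_{S_m}$-length of some designated element, for which a natural approach is to reduce modulo a growing sequence of moduli and to use lower estimates on conjugation-invariant word norms in the finite quotients $\SL(n,\B Z/m\B Z)$.
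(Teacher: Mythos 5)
Your outline correctly identifies the two inputs (Carter--Keller for the upper bounds, congruence subgroups for the lower bound) and correctly locates the crux, but for the strong boundedness half you explicitly leave that crux unproved: you write that assembling $E_{1,2}(1)$ from conjugates of an arbitrary $k$-element normally generating set in a number of steps bounded in $k$ ``is the main obstacle,'' and then offer only the remark that proper normal subgroups are essentially congruence subgroups. That remark (the congruence subgroup property) does not by itself yield any quantitative bound: knowing that $\lAngle S\rAngle$ fails to lie in any congruence subgroup tells you nothing about how many conjugates of elements of $S$ are needed to reach $E_{1,2}(1)$. The paper fills this gap with an explicit computation, the ``double commutator lemma'' (Lemma \ref{L:magic double commutators}): for a suitable (Hessenberg) representative $A$ of each generator, the double commutators $[[A,E_{i,j}(1)],E_{k,\ell}(x)]$ are conjugate to $E_{1,n}(xt)$, where the resulting values $t$ generate ideals controlled by the entries of $A$ (Lemma \ref{L:ideals in E from Hessenberg}, Proposition \ref{P:prime ideals of A in E}). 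One then shows (Proposition \ref{P:S conjugation generates:revised}) that $S$ normally generates if and only if the associated ideals sum to all of $\B Z$, in which case $E_{1,n}(1)\in B_S(4k(n+1))$; combining with Carter--Keller via Lemma \ref{L:BXn properties}\ref{L:BXn:assoc} gives $\Delta_k \leq (4n+4)(4n+51)k$. Without some mechanism of this kind, producing elementary matrices in a \emph{uniformly} bounded number of conjugates of arbitrary generators, your first half is a plan rather than a proof.

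The second half is closer to complete. The paper's witness is $S=\{E_{1,n}(r_1),\dots,E_{1,n}(r_k)\}$ with $r_i=\prod_{j\neq i}p_j$ for distinct primes $p_1,\dots,p_k$; normal generation follows from the Chinese remainder theorem and Carter--Keller, and the lower bound $\|G\|_S\geq k$ is a one-line pigeonhole argument: every conjugate of $E_{1,n}(r_i)^{\pm1}$ is trivial modulo $p_j$ for all $j\neq i$, so a product of fewer than $k$ conjugates is trivial modulo some $p_i$ and cannot equal $E_{1,n}(1)$. Your proposed route via ``lower estimates on conjugation-invariant word norms in the finite quotients'' is unnecessarily heavy and not actually what is needed; the point is merely membership in a congruence subgroup, not a norm estimate in the quotient. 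You should also note that a lower bound $\|G\|_{S_m}\to\infty$ with $|S_m|\to\infty$ shows $\Delta(G)=\infty$ but is compatible with strong boundedness, which is exactly why the two halves coexist.
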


\begin{remarknonum}
In contrast, $\OP{SL}(n,\C R)$ is uniformly bounded, where $n \geq 3$ and $\C R$ is a principal ideal domain with only finitely many maximal ideals (Theorem \ref{T:slnr main theorem:2}).
\end{remarknonum}

\begin{remarknonum}

Theorem \ref{T:Intro slnZ} is related to results of \cite{MR2357719}; indeed, one can show using Corollary 3.8 and Proposition 6.7 of \cite{MR2357719} that $\OP{SL}(n,\C R)$ is strongly bounded for a large class of rings $\C R$, including $\C R= \B Z$ (we thank Dave Morris for this observation).  This argument uses the Compactness Theorem from first-order logic, and it does not yield any explicit bound for $\Delta_k(\OP{SL}(n,\C R))$.  Remark (6.2) of \cite{MR2357719} is incorrect since it implies that
$\Delta(\OP{SL}(n,\B Z))$ is bounded by a function of $n \geq 3$, which
contradicts Theorem \ref{T:Intro slnZ}.

\end{remarknonum}

Before passing to applications, we mention that 
uniform boundedness imposes group-theoretic restrictions.

\begin{theorem}[Theorem \ref{T:simple_quotient}]
A uniformly bounded group has only finitely many maximal normal subgroups.
\end{theorem}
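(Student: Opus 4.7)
The plan is to prove the contrapositive: if $G$ is uniformly bounded with $\Delta(G)=D<\infty$, then $G$ has only finitely many maximal normal subgroups. Suppose for contradiction that $N_1, N_2, \ldots$ are distinct maximal normal subgroups, with simple quotients $Q_i = G/N_i$. The first step is a Goursat/CRT argument showing that for every $n$, the natural map $\phi_n \colon G \to Q_1 \times \cdots \times Q_n$ is surjective. Induct on $n$: the case $n=2$ follows from $N_1 N_2 = G$ (by maximality) and a direct coset calculation, and in the inductive step the image of $\bigcap_{i<n} N_i$ in the simple group $Q_n$ is either $Q_n$ (good) or trivial. In the latter case the image of $N_n$ in $G/\bigcap_{i<n} N_i \cong Q_1 \times \cdots \times Q_{n-1}$ is a maximal normal subgroup of a product of simple groups; using the classification of normal subgroups of such a product, it must be of the form $\prod_{j \neq k} Q_j \times \{1\}$, forcing $N_n = N_k$ and contradicting distinctness.

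The second step produces a lower bound in the product. Pick a nontrivial $x_i \in Q_i$ and let $e_i = (1, \ldots, x_i, \ldots, 1) \in \prod Q_j$. By simplicity of each $Q_i$, the normal closure of $e_i$ is the $i$-th factor, so $T = \{e_1, \ldots, e_n\}$ normally generates $\prod Q_j$. A coordinate-wise computation, using that each $e_i$ acts only on coordinate $i$, gives $\|\prod Q_j\|_T = \sum_i \|Q_i\|_{\{x_i\}} \geq n$. Now lift each $e_i$ to $a_i \in \bigcap_{j\ne i} N_j \setminus N_i$ and fix a finite normally generating set $S_0 = \{s_1, \ldots, s_m\}$ of $G$. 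The set $S_n = \{a_1, \ldots, a_n\} \cup S_0$ normally generates $G$, satisfies $|S_n| \leq n+m$, and by uniform boundedness $\|G\|_{S_n} \leq D$.

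The final step is to exhibit a test element $g \in G$ with $\|g\|_{S_n} \geq n$, which contradicts $\|G\|_{S_n} \leq D$ for large $n$. Any factorisation $g = \prod_l w_l r_l w_l^{-1}$ partitions its $\ell$ factors into axis-type (using $a_i^{\pm 1}$) and $S_0$-type (using $s_q^{\pm 1}$); projecting via $\phi_n$, an axis-$i$ factor contributes nontrivially only in coordinate $i$, while each $S_0$-type factor contributes a conjugate of $\phi_j(s_q)^{\pm 1}$ simultaneously in every coordinate. Letting $A_j$ count axis-$j$ factors and $B$ the $S_0$-type factors, one obtains the coordinate-wise constraint
\[
  \|\phi_j(g)\|_{Q_j,\,\{x_j\}\cup\phi_j(S_0)} \;\leq\; A_j + B \qquad (j = 1, \ldots, n).
\]

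The main obstacle is that the $S_0$-type factors act simultaneously across all $n$ coordinates and can in principle subsidise every coordinate at once, so the naive sum-bound $\ell \geq \sum_j\|\phi_j(g)\|$ collapses to a max-over-coordinates bound uniformly controlled by $D$. To recover a sum-type estimate I would split the problem according to whether the simple quotients $Q_i$ are abelian or not. If infinitely many $Q_i$ are abelian, the composition $G \to G^{\mathrm{ab}} \to \prod Q_i$ exhibits an infinite abelian quotient of $G$, forcing $G^{\mathrm{ab}}$ to be infinite; but a uniformly bounded group has finite abelianization, since an infinite finitely generated abelian group carries unbounded word norms. For the non-abelian case, I would choose the lift $g$ of a tuple $(h_1, \ldots, h_n)$ not simultaneously realisable in every coordinate by any single ``global recipe'' -- a common sequence of $S_0$-letters of length at most $D$ with independent coordinate-wise conjugators. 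The number of such recipes is bounded by $(2m)^D$ independently of $n$, and for each recipe the set of realisable tuples is a product of $n$ conjugacy-class products in the respective $Q_j$; comparing cardinalities, for $n$ sufficiently large a tuple evading every recipe exists, thereby forcing $A_j \geq 1$ in every coordinate and hence $\ell \geq n$, contradicting $\ell \leq D$.
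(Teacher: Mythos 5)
Your overall architecture --- map $G$ onto a product $Q_1\times\cdots\times Q_n$ of simple quotients, and use the fact that a word norm with respect to generators each supported in a single coordinate is additive, hence has diameter at least $n$ --- is exactly the paper's (Lemma \ref{L:splitting new} together with Lemma \ref{L:product of simple uniformly bounded}\ref{I:product delta geq sum delta}). But the step that transfers the lower bound from the quotient back to $G$ is broken, and the breakage is structural, not technical. You take $S_n=\{a_1,\dots,a_n\}\cup S_0$ where $S_0$ is a fixed normally generating set of $G$. Since $S_0\subseteq S_n$, Lemma \ref{L:BXn properties}\ref{L:BXn:incl} gives $\|g\|_{S_n}\le\|g\|_{S_0}$ for every $g$, and under the hypothesis you are contradicting, $\|G\|_{S_0}\le\Delta_m(G)\le D$. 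So $\|G\|_{S_n}\le D$ holds \emph{automatically}, and the test element with $\|g\|_{S_n}\ge n$ that your final step promises simply does not exist. Your recipe-counting cannot manufacture one: for a fixed recipe $R=s_{l_1}^{\epsilon_1}\cdots s_{l_B}^{\epsilon_B}$ the ``realisable set'' in coordinate $j$ is a product $P_{R,j}$ of $B\le D$ conjugacy classes of elements of $\phi_j(S_0)$ in $Q_j$, and nothing forces $P_{R,j}\subsetneq Q_j$; if some recipe has $P_{R,j}=Q_j$ for all $j$ (which happens precisely when the $S_0$-letters alone suffice, i.e.\ in the regime you are assuming), then every tuple is realisable with no axis factors at all and the cardinality comparison collapses. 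The correct repair is the paper's Lemma \ref{L:finite gen quotient not strongly bounded}: augment the lifted coordinate generators not by $S_0$ but by the elements $y_iw_i^{-1}$, where $w_i$ is a $\sigma(X)$-word with the same image as $y_i$; these auxiliary generators lie in $\ker\phi_n$, so they project to the identity, whence $\|\phi_n(g)\|_X=\|\phi_n(g)\|_{\phi_n(Z)}\le\|g\|_Z$ and the additive lower bound in the product pulls back to $\Delta_{n+m}(G)\ge n$.

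A secondary gap: your Step 1 claims that $G\to Q_1\times\cdots\times Q_n$ is surjective for \emph{every} family of distinct maximal normal subgroups. This is false when abelian quotients occur ($\B Z/p\times\B Z/p$ has $p+1$ maximal normal subgroups but does not surject onto $(\B Z/p)^{3}$), because normal subgroups of a product of \emph{abelian} simple groups need not be sub-products. Your later abelian/non-abelian dichotomy (finitely many abelian $Q_i$, else $G^{\mathrm{ab}}$ is infinite and $G$ is unbounded) can repair this if you perform the split \emph{before} Step 1, but note that the paper avoids the issue entirely: it does not claim every family splits, but iteratively builds splitting collections of arbitrary size, using only that a finite product of simple groups has finitely many normal subgroups (Lemma \ref{L:finitely many maximal normal subgroups}) to guarantee that some maximal normal subgroup fails to contain the current kernel and can be adjoined.
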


\noindent For an application to linear groups, see Theorem~\ref{T:strong-approximation}, which
states that finitely generated Zariski dense subgroups of certain algebraic groups
are not uniformly bounded.

\subsection*{An application to cocompact lattices}

It is an open problem whe\-ther finitely generated cocompact lattices in
semisimple Lie groups are boun\-ded. Many such lattices can be embedded as dense
subgroups in compact simple Lie groups.  For example, 
$\OP{SO}(n,\B Z[1/5])\subset \OP{SO}(n)$ for $n\geq 5$ is such a group
\cite[Example 3.2.2 (B), Example 3.2.4 (B) and Proposition 3.2.2]{MR1308046}.
Our next result, which immediately follows from Proposition~\ref{P:dense subgroups}\ref{I:dense subgroups:dense not strongly bounded},
implies that such lattices are not strongly bounded.

\begin{theorem}\label{thm:dense_Lie}
Let $G$ be a compact simple Lie group and let $H$ be a finitely normally generated group.
If $H\to G$ is a homomorphism with dense image then $H$ is not strongly bounded.
\end{theorem}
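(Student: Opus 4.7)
My plan is, for a fixed finite normally generating set $T_0 = \{t_1, \ldots, t_k\}$ of $H$, to construct a sequence of normally generating sets $T_m$ of $H$ with $|T_m| \leq 2k$ and $\|H\|_{T_m} \to \infty$; this exhibits $\Delta_{2k}(H) = \infty$, so $H$ is not strongly bounded. The lower bound on $\|H\|_{T_m}$ will come from a conjugation-invariant length function on $H$ obtained by pulling back a bi-invariant Riemannian metric $\rho$ on the compact Lie group $G$: set $\mu(h) := \rho(\phi(h),e)$. Since $\mu$ is subadditive and invariant under conjugation and inversion (by bi-invariance of $\rho$), $\mu(h) \leq \|h\|_T \cdot \max_{t \in T}\mu(t)$ for any normally generating $T \subseteq H$; taking the supremum over $h$ and using that $\phi(H)$ is dense in $G$, the left-hand side equals $\mathrm{diam}(G,\rho)=:D>0$, so $\|H\|_T \geq D / \max_{t \in T} \mu(t)$. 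It therefore suffices to find normally generating $T_m \subseteq H$ of bounded cardinality with $\max_{t \in T_m} \mu(t) \to 0$.

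To construct $T_m$, I exploit that on the compact connected Lie group $G$ every element has $2^m$-th roots. For each $i$, pick via the exponential map $g_{i,m} \in G$ with $g_{i,m}^{2^m} = \phi(t_i)$ and $\rho(g_{i,m},e) \leq \rho(\phi(t_i),e)/2^m$. By density of $\phi(H)$, choose $h_{i,m} \in H$ with $\rho(\phi(h_{i,m}), g_{i,m}) < 2^{-m}/m$, and set
\[
T_m := \{h_{i,m},\ h_{i,m}^{-2^m} t_i : 1 \leq i \leq k\}.
\]
The normal closure of $T_m$ in $H$ contains $h_{i,m}$ and hence $h_{i,m}^{2^m}$, and it also contains $h_{i,m}^{-2^m} t_i$; their product is $t_i$, so $T_m$ normally generates $H$. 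The bound $\mu(h_{i,m}) \leq \rho(g_{i,m},e) + 2^{-m}/m \to 0$ is immediate.

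The main technical point is showing $\mu(h_{i,m}^{-2^m} t_i) \to 0$, equivalently that $\phi(h_{i,m})^{2^m}$ stays close to $\phi(t_i) = g_{i,m}^{2^m}$ in spite of the $2^m$-fold amplification of the error $e_{i,m} := g_{i,m}^{-1}\phi(h_{i,m})$. The key tool is the bi-invariant estimate
\[
\rho\bigl((g e)^n, g^n\bigr) \leq n \, \rho(e, e_G) \quad \text{for all } g,e \in G \text{ and } n \geq 1,
\]
which I would prove by induction on $n$ by writing $(ge)^{n+1} g^{-(n+1)} = \bigl[(ge)^n g^{-n}\bigr] \cdot \bigl[g^{n+1} e g^{-(n+1)}\bigr]$ and applying the triangle inequality together with the bi-invariance of $\rho$. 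Applied with $g = g_{i,m}$, $e = e_{i,m}$, and $n = 2^m$, this bounds $\mu(h_{i,m}^{-2^m} t_i)$ by $2^m \cdot 2^{-m}/m = 1/m \to 0$. Hence $\max_{t \in T_m}\mu(t) \to 0$, and the first-paragraph inequality then yields $\|H\|_{T_m} \to \infty$, as required.
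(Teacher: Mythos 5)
Your proof is correct, but it takes a genuinely different route from the paper's. The paper first replaces $H$ by its image $\phi(H)\le G$ and then applies Proposition~\ref{P:dense subgroups}\ref{I:dense subgroups:dense not strongly bounded}: for each $\epsilon>0$ it picks a non-central $k\in H$ with $\nu(k)<\epsilon$, uses \emph{simplicity} of $G$ to see that the normal closure $K$ of $k$ in $H$ is dense in $G$ (every proper normal subgroup of $G$ is central), and then perturbs each original generator $h_i$ inside the dense coset $h_iK$ to an element of norm $<\epsilon$; the nonsqueezing Lemma~\ref{L:non-squeezing} finishes. You instead keep $H$ abstract and build small normal generating sets by taking approximate $2^m$-th roots: surjectivity of $\exp$ on the compact connected group supplies $g_{i,m}$ with $g_{i,m}^{2^m}=\phi(t_i)$, density supplies $h_{i,m}$ within $2^{-m}/m$ of $g_{i,m}$, and your estimate $\rho((ge)^n,g^n)\le n\,\rho(e,e_G)$ --- correctly established via the identity $(ge)^{n+1}g^{-(n+1)}=\bigl[(ge)^ng^{-n}\bigr]\bigl[g^{n+1}eg^{-(n+1)}\bigr]$ and bi-invariance --- keeps the amplified error under control, so both $h_{i,m}$ and $h_{i,m}^{-2^m}t_i$ have small pulled-back norm while $T_m$ still normally generates; the same Lipschitz inequality (Proposition~\ref{P:lipschitz-max}) then gives $\Delta_{2k}(H)=\infty$. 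The trade-off is instructive: the paper's argument uses only that $G$ carries a conjugation-invariant norm and a closed, non-open subgroup outside of which every element normally generates, so it works far beyond Lie groups; your argument never uses simplicity of $G$ at all --- only that $G$ is compact, connected and nontrivial --- so it actually proves the stronger statement that a strongly bounded finitely normally generated group admits no dense-image homomorphism to any nontrivial compact connected Lie group, and it is quantitative, giving $\|H\|_{T_m}\ge Dm(1+o(1))$. One cosmetic remark: your bound $\rho(g_{i,m},e)\le\rho(\phi(t_i),e)/2^m$ requires choosing $X$ with $\exp X=\phi(t_i)$ along a minimizing geodesic, but any logarithm of $\phi(t_i)$ would do, since all you need is $\rho(g_{i,m},e)\to 0$.
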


\subsection*{Applications to finite groups of Lie type}

Clearly any finite group is uniformly boun\-ded.
The value of $\Delta(G)$ is related to the size of the conjugacy classes.
We prove the following results in Section \ref{S:applications to finite groups}.

\begin{proposition}[Example \ref{E:PSL(n,q) ccs}; compare with {\cite[Corollary 4.3]{MR3198721}}]
Let $n \geq 3$ and $q$ a prime power.
Then $\Delta(\PSL(n,q)) \leq 12(n-1)$ and consequently, if $S$ is a non-trivial conjugacy class then 
\[
\log |S| > \frac{\log |G|}{\Delta(G)}- 2 \geq (n+1) \cdot \frac{\log q}{12}- \frac{\log q+ 2}{12(n-1)}- 2.
\]
\end{proposition}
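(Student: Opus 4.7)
The plan is to prove the two assertions separately: first the bound $\Delta(\PSL(n,q)) \leq 12(n-1)$, and then the consequent lower bound on $\log|S|$.

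\textbf{Bounding $\Delta$.} I would apply Theorem~\ref{T:slnr main theorem:2} to $\C R = \B F_q$, which is trivially a principal ideal domain with a single maximal ideal (the zero ideal); tracking the explicit constants through the proof of that theorem should yield $\Delta(\SL(n,q)) \leq 12(n-1)$. To transfer this bound to $\PSL(n,q) = \SL(n,q)/Z$, I would use the fact that $\SL(n,q)$ is quasisimple for $n \geq 3$, so its only proper normal subgroups lie in $Z$. Any non-trivial $\bar s \in \PSL(n,q)$ therefore lifts to some $\tilde s \in \SL(n,q)\setminus Z$ that normally generates $\SL(n,q)$; thus $\|\SL(n,q)\|_{\{\tilde s\}} \leq 12(n-1)$, and projecting the corresponding word under $\pi\colon\SL(n,q)\to\PSL(n,q)$ gives $\|\PSL(n,q)\|_{\{\bar s\}}\leq 12(n-1)$. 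Since $\PSL(n,q)$ is simple, every finite normally generating set $T$ contains a non-trivial $\bar s$ and $\|\PSL(n,q)\|_T \leq \|\PSL(n,q)\|_{\{\bar s\}}$, so $\Delta(\PSL(n,q)) \leq 12(n-1)$.

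\textbf{The counting inequality.} For a non-trivial conjugacy class $S$ in $G = \PSL(n,q)$, the set $S\cup S^{-1}$ is invariant under conjugation, so any word of length at most $\Delta(G)$ in conjugates of elements of $S^{\pm 1}$ is already a product of elements of $S\cup S^{-1}$. Padding with the identity, every $g\in G$ equals a product of exactly $\Delta(G)$ terms from $S\cup S^{-1}\cup\{e\}$, whence $|G|\leq (2|S|+1)^{\Delta(G)}$. Since $2|S|+1 < e^2|S|$ for all $|S|\geq 1$, taking natural logarithms yields $\log|S|>\log|G|/\Delta(G)-2$.

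\textbf{The explicit numerical bound.} Starting from
\[
|\PSL(n,q)| = \frac{q^{n(n-1)/2}}{\gcd(n,q-1)}\prod_{i=2}^n(q^i-1),
\]
the elementary estimate $\log(1-x)\geq -2x$ for $x\in[0,1/2]$ gives $\log(q^i-1)\geq i\log q - 2q^{-i}$. Summing for $i=2,\ldots,n$ and using $\sum_{i\geq 2}2q^{-i}\leq 1$ together with $\log\gcd(n,q-1)\leq \log q$ yields $\log|\PSL(n,q)|\geq (n^2-2)\log q - 1$. Dividing by $\Delta(G)\leq 12(n-1)$ and using the identity $(n^2-2)/(n-1)=(n+1)-1/(n-1)$,
\[
\frac{\log|G|}{\Delta(G)} \geq (n+1)\cdot\frac{\log q}{12} - \frac{\log q + 1}{12(n-1)} \geq (n+1)\cdot\frac{\log q}{12} - \frac{\log q + 2}{12(n-1)}.
\]
Combining this with the counting inequality gives the stated chain.

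\textbf{Main obstacle.} The core difficulty is establishing $\Delta(\SL(n,q))\leq 12(n-1)$ with the explicit constant $12$: this requires extracting the precise numerical bounds from (the specialisation to fields of) Theorem~\ref{T:slnr main theorem:2}, which in turn rests on expressing elements of $\SL(n,q)$ as products of a bounded number of conjugates of any non-central element via explicit transvection-based arguments. The remaining counting step and the order estimate for $|\PSL(n,q)|$ are routine.
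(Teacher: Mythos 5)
Your proposal is correct and follows essentially the same route as the paper: specialise Theorem~\ref{T:slnr main theorem:2} to the field $\B F_q$ (one maximal ideal, so $d=1$), pass the bound to the quotient $\PSL(n,q)$, run the counting argument of Proposition~\ref{P:minimal ccs}, and estimate $\log|\PSL(n,q)|$. The only (immaterial) differences are that the paper transfers the bound from $\SL$ to $\PSL$ via the general quotient lemma (Lemma~\ref{L:finite gen quotient not strongly bounded}\ref{I:finite gen quotient not strongly bounded:2}) rather than your quasisimplicity lifting, and that since the paper's $\log$ is $\log_2$ your counting step should invoke $2|S|+1<4|S|$ rather than $2|S|+1<e^2|S|$ — which your argument already yields.
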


\begin{proposition}\label{P:slnzl ccs}
Let $\ell$ be an integer and $p_1,\dots,p_k$ its distinct prime factors.
Let $n \geq 3$.
Then
\[
\Delta(\OP{SL}(n,\B Z/\ell)) \leq 12k(n-1)
\]
and if $S$ is the conjugacy class of a matrix $A \in \OP{SL}(n,\B Z/\ell)$ whose reduction modulo $p_i$ is not scalar in $\OP{SL}(n,\B Z/p_i)$ for all $i$ then 
\[
\log |S| \geq \frac{\log|\SL(n,\ZZ/\ell)|}{12k(n-1)}- 2.
\]
\end{proposition}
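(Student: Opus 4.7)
The strategy combines the Chinese Remainder Theorem, the preceding bound $\Delta(\PSL(n,q)) \le 12(n-1)$, a general direct-product inequality for $\Delta$, and a standard counting argument. Writing $\ell = \prod_{i=1}^{k} p_i^{a_i}$, the CRT isomorphism gives
\[
\OP{SL}(n,\B Z/\ell) \;\cong\; \prod_{i=1}^{k} \OP{SL}(n,\B Z/p_i^{a_i}).
\]
I would first upgrade the $\PSL$-bound to $\Delta(\OP{SL}(n,\B Z/p^a)) \le 12(n-1)$ by observing that the composition $\OP{SL}(n,\B Z/p^a) \to \OP{SL}(n,p) \to \PSL(n,p)$ has solvable kernel (a $p$-group congruence piece followed by the finite cyclic centre), while $\OP{SL}(n,\B Z/p^a)$ is perfect for $n \ge 3$. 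Consequently a normally generating set projects to a normally generating set of $\PSL(n,p)$ with the same word-norm behaviour, and the solvable kernel is absorbed by perfectness without inflating the constant.

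Next I would apply the direct-product inequality $\Delta(G_1 \times \cdots \times G_k) \le \sum_i \Delta(G_i)$ to the CRT factorisation. Given a normally generating set $S$ of the product, each coordinate projection $\pi_i(S)$ normally generates $G_i$; a target $(g_1,\dots,g_k)$ is written factor by factor, using conjugators supported on one factor at a time so that only the intended coordinate is affected by the conjugation itself, while the off-diagonal residues arising from the fixed $\pi_j(s)$ content of each chosen $s \in S$ are cleaned up in later rounds. This yields $\Delta(\OP{SL}(n,\B Z/\ell)) \le 12k(n-1)$.

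For the second assertion I would first verify that the conjugacy class $S$ of $A$ normally generates $G := \OP{SL}(n,\B Z/\ell)$. Under the CRT decomposition, each projection $A_i$ has non-scalar reduction mod $p_i$, so its normal closure in $\OP{SL}(n,p_i)$ equals $\OP{SL}(n,p_i)$ by simplicity of $\PSL(n,p_i)$ (for $n \ge 3$) together with perfectness of $\OP{SL}(n,p_i)$; the same perfectness argument lifts normal generation through the $p_i$-group congruence kernel up to $\OP{SL}(n,\B Z/p_i^{a_i})$. Setting $d := 12k(n-1)$, every element of $G$ is therefore a product of at most $d$ letters from $S \cup S^{-1}$ (the conjugator contributes nothing new, as conjugates of members of a single conjugacy class remain in that class). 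Counting such words gives $|G| \le (2|S|+1)^d$, whence
\[
\frac{\log |G|}{d} \;\le\; \log(2|S|+1) \;\le\; \log|S| + 2
\]
whenever $|S| \ge 1$, which rearranges to the stated inequality.

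The main obstacle is the combinatorial bookkeeping in the direct-product inequality for $\Delta$: the off-coordinate residues must be cancelled without inflating the word length, and the scheme has to work for arbitrary normally generating sets $S$ of the product, not merely for sets of the form $S_1 \sqcup \cdots \sqcup S_k$. A secondary technical point is the clean formulation of ``lift through a solvable kernel inside a perfect group'' so that the constant $12(n-1)$ in the $\PSL(n,p)$ bound transfers unchanged to $\OP{SL}(n,\B Z/p^a)$.
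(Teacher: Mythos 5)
Your overall architecture (CRT decomposition, bound each factor, sum over factors, then count) does not go through as sketched, and the two steps you flag as obstacles are genuine gaps rather than bookkeeping. First, the direct-product inequality $\Delta(G_1\times\cdots\times G_k)\le\sum_i\Delta(G_i)$ is not available: the paper proves only the reverse inequality in general (Lemma \ref{L:product of simple uniformly bounded}\ref{I:product delta geq sum delta}), and an upper bound only for products of \emph{simple} factors, where the commutator trick needed to isolate coordinates costs a factor of $2$, giving $\Delta(G)\le 2\sum_i\Delta(G_i)$ --- already too weak for the constant $12k(n-1)$. The factors $\OP{SL}(n,\B Z/p_i^{a_i})$ are not simple, and your ``clean up the off-diagonal residues in later rounds'' scheme does not work as described: conjugation cannot change which conjugacy class a letter lies in, so every letter spent on coordinate $i$ forces an unwanted contribution in every other coordinate, and cancelling these costs extra letters that your count does not account for. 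Second, transferring the bound $12(n-1)$ from $\PSL(n,p)$ \emph{up} to $\OP{SL}(n,\B Z/p^a)$ ``without inflating the constant'' is unjustified: bounds on $\Delta$ pass easily down to quotients (Lemma \ref{L:finite gen quotient not strongly bounded}\ref{I:finite gen quotient not strongly bounded:2}), but going up through a finite kernel $N$ costs a factor of roughly $2|N|$ (Lemma \ref{L:finite_extn new}\ref{I:finite_extn new:2}), and here $|N|$ is enormous. Perfectness gives you normal generation of the extension, not a word-length bound with the same constant.

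The paper sidesteps both issues by never decomposing the finite group: it realises $\OP{SL}(n,\B Z/\ell)$ as a quotient of $\OP{SL}(n,\F R)$ for the localisation $\F R=\B Z_{(p_1,\dots,p_k)}$, a p.i.d.\ with exactly $k$ maximal ideals, so Theorem \ref{T:slnr main theorem:2} gives $\Delta(\OP{SL}(n,\F R))\le 12k(n-1)$ in one stroke and Lemma \ref{L:finite gen quotient not strongly bounded} passes this down to the quotient (the surjectivity of reduction mod $\ell$ is checked by adjusting a lift by a unit of $\F R$). Normal generation by $A$ is then obtained from Propositions \ref{P:slnr bounded r local} and \ref{P:S conjugation generates:revised} rather than from simplicity of the $\PSL(n,p_i)$ plus lifting. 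Your final counting step, $|G|\le(2|S|+1)^{d}$ and hence $\log|S|\ge\frac{\log|G|}{d}-2$, is correct and is exactly Proposition \ref{P:minimal ccs}; but it rests on the unproven bound $\Delta(G)\le 12k(n-1)$.
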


\subsection*{Applications to Hamiltonian dynamics}

Let $(M,\omega)$ be a closed (i.e., compact without boundary)
symplectic manifold and let~$\OP{Ham}(M,\omega)$ denote the group of Hamiltonian
diffeomorphisms of $(M,\omega)$.  
This group is simple \cite[Theorem 4.3.1.(ii)]{MR1445290}.
For background on symplectic manifolds and
Hamiltonian actions see, for example,  Arnold-Khesin \cite{MR1612569} or
McDuff-Salamon \cite{MR2000g:53098}.  


The following theorem gives information about the subgroup structure of
$\OP{Ham}(M,\omega)$.  
Part \eqref{I:non embedding Diff} is an immediate consequence of 
\cite[Theorem 1.11(i)]{MR2509711}.
Part \eqref{I:non embedding in ham Lie groups} is
related to a result of Delzant \cite{delzant} which says that a non-compact
simple Lie group $G$ cannot act {\em smoothly} on $M$. 
Another proof is due to Polterovich and Rosen \cite[Proposition 1.3.18]{MR3241729}, again for smooth actions.  
Our argument works for all actions, not just smooth ones.

\begin{theorem}
No subgroup of $\OP{Ham}(M,\omega)$ is abstractly isomorphic to any one of the following groups.
\begin{enumerate}
\item A semisimple Lie group $G$ with finite center and no non-trivial compact factors. \label{I:non embedding in ham Lie groups}
\item A semisimple algebraic group $G$ over an uncountable algebraically closed field. \label{I:non embedding in ham algebraic groups}
\item The automorphism group $G$ of a regular tree with vertices of valence at least $3$. \label{I:non embedding in ham trees}
\item The identity component $\OP{Diff}_0(N)$ of the group $\OP{Diff}(N)$ of compactly supported
diffeomophisms of a connected smooth manifold $N$. \label{I:non embedding Diff}
\end{enumerate}
\end{theorem}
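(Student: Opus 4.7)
The approach is uniform across the four parts: each group $G$ in the list satisfies a boundedness condition established earlier in the paper or in cited work, and this is shown to be incompatible with embedding into $\OP{Ham}(M,\omega)$, which admits an unbounded conjugation-invariant norm (the Hofer norm $\|\cdot\|_H$).

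For part \eqref{I:non embedding Diff}, the conclusion is immediate from \cite[Theorem 1.11(i)]{MR2509711}: Burago--Ivanov--Polterovich establish that $\OP{Diff}_0(N)$ is \emph{strongly distorted}, a condition which forbids embedding into any group carrying an unbounded conjugation-invariant norm. Since $\OP{Ham}(M,\omega)$ carries such a norm, the result follows.

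For parts \eqref{I:non embedding in ham Lie groups}, \eqref{I:non embedding in ham algebraic groups} and \eqref{I:non embedding in ham trees} I would first invoke uniform boundedness of $G$: part \eqref{I:non embedding in ham Lie groups} from Theorem \ref{T:Lie groups boundedness properties} (the semisimple extension of Theorem \ref{T:Intro Lie groups}), noting that having no non-trivial compact factor forces every simple factor to be non-compact and thus uniformly bounded; part \eqref{I:non embedding in ham algebraic groups} from Theorem \ref{T:alggp_unif_bd}, since a semisimple algebraic group over an algebraically closed field is perfect and hence has trivial abelianization; and part \eqref{I:non embedding in ham trees} from the analogous uniform boundedness result for tree automorphism groups established elsewhere in the paper. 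In each case $\Delta(G) < \infty$.

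Now suppose $\iota : G \hookrightarrow \OP{Ham}(M,\omega)$ is an abstract embedding. Fix any finite normally generating set $S$ of $G$; each $g \in G$ is a product of at most $\Delta(G)$ conjugates of elements of $S \cup S^{-1}$, so by conjugation-invariance $\|\iota(g)\|_H \leq \Delta(G) \cdot \max_{s \in S}\|\iota(s)\|_H$. Hence $\iota(G)$ lies in a Hofer ball of finite radius. The main obstacle, and technical heart of the argument, is to convert this Hofer-boundedness into an actual contradiction. Following the Burago--Ivanov--Polterovich philosophy that drives \eqref{I:non embedding Diff}, one would identify within each $G$ a sequence of elements whose Hofer norms must grow under any abstract embedding: natural candidates are iterates of a hyperbolic/semisimple one-parameter subgroup for \eqref{I:non embedding in ham Lie groups} and \eqref{I:non embedding in ham algebraic groups}, controlled via displacement-energy estimates, and iterates of a hyperbolic tree automorphism for \eqref{I:non embedding in ham trees}. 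It is the \emph{uniform} (rather than plain) boundedness of $G$ that is critical, since it provides a single quantitative bound on $\|\iota(g)\|_H$ that must ultimately be violated by the constructed sequence.
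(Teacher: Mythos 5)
There is a genuine gap in parts \eqref{I:non embedding in ham Lie groups}--\eqref{I:non embedding in ham trees}, and you have located it yourself: after showing that $\iota(G)$ lies in a Hofer ball of finite radius, you still need a contradiction, and the route you sketch (sequences of hyperbolic elements whose Hofer norms ``must grow'', controlled by displacement-energy estimates) is not carried out and cannot work as stated. Displacement-energy lower bounds are geometric and are not preserved by an \emph{abstract} (non-continuous) isomorphism, which is exactly the generality the theorem claims; moreover, such a growing sequence would directly contradict the bound $\|\iota(g)\|_H\leq \Delta(G)\cdot\max_s\|\iota(s)\|_H$ you just proved, so no such sequence can exist inside $\iota(G)$ --- the contradiction has to come from elsewhere. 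The missing ingredient is the nonsqueezing/discreteness mechanism: for a uniformly bounded \emph{simple} group, Lemma \ref{L:non-squeezing} gives $\inf_{1\neq g}\nu(g)\geq \OP{diam}(\nu)/\Delta(G)>0$ for every conjugation-invariant norm $\nu$ (Corollary \ref{C:uniformly-simple}), and this propagates along a composition series with uniformly bounded simple factors (Corollary \ref{C:uniformly simple quotients}). Applied to the restriction of the Hofer norm, this makes $G$ a \emph{discrete} subset of $\OP{Ham}(M,\omega)$; since $G$ is uncountable and the Hofer metric is separable (it is coarser than the separable $C^1$-topology), this is impossible. Note that you need uniform boundedness of the simple \emph{composition factors}, not just of $G$: plain uniform boundedness of a non-simple group does not force every conjugation-invariant norm to be discrete. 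This is why the paper decomposes $G/Z(G)$ into simple centre-free factors in \eqref{I:non embedding in ham Lie groups} and uses $H/Z(H)$ plus finite simple groups in \eqref{I:non embedding in ham algebraic groups}; also, the tree case is a citation to the literature, not a result proved in the paper.

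Your treatment of part \eqref{I:non embedding Diff} also misstates the mechanism. What is used from \cite[Theorem 1.11(i)]{MR2509711} is that every conjugation-invariant norm on $\OP{Diff}_0(N)$ is discrete; strong distortion does not by itself ``forbid embedding into any group carrying an unbounded conjugation-invariant norm'' (a bounded group can perfectly well sit inside a group with an unbounded norm --- it just sits in a ball). Again the contradiction requires the triple of discreteness of the restricted norm, uncountability of $G$, and separability of the Hofer metric; the separability of $\OP{Ham}(M,\omega)$ never appears in your argument, and without it there is no contradiction.
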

\begin{proof}
The Hofer norm \cite[Section 12.3]{MR2000g:53098} is a nondiscrete conjugation-invariant norm on $H=\OP{Ham}(M,\omega)$.
The identity map from the $C^1$-topology to the Hofer topology is continuous \cite[Proposition 5.10]{MR96h:58063}, and since the $C^1$-topology is separable \cite[Section 2]{MR1336822}, the Hofer topology is separable too.

(\ref{I:non embedding in ham Lie groups}) 
Suppose that $G$ is a semisimple Lie group with no compact factors and finite center.
Then $G/Z(G)$ is a product of simple non-compact centre-free Lie groups (see Section \ref{S:Lie groups}).
Since centre-free simple Lie groups are simple abstract groups  (see \cite[Proposition 6.30]{\Knapp} and Lemma \ref{L:bg2n contains nbhd}), it follows from Theorem \ref{T:Intro Lie groups} that~ $G$ has a finite composition series with all factors uniformly bounded.
Then~$G$ is not isomorphic (abstractly) to a subgroup of $H$ by Corollary~\ref{C:uniformly simple quotients}\ref{I:uniformly simple quotients:non-embedding}.

(\ref{I:non embedding in ham algebraic groups})
A semisimple algebraic group $G$ over an algebraically closed field $k$ admits a normal series such that each factor group is a simple algebraic group.  If $H$ is a simple algebraic group over $k$ then $|Z(H)|<\infty$  and $H/Z(H)$ is simple as an abstract group \cite[Section 27.5 and Corollary 29.5]{MR0396773}.  Hence $G$ has a composition series such that each composition factor is either of the form $H/Z(H)$ or a finite simple group.
The latter are clearly uniformly bounded, and it follows from Proposition \ref{P:alggp} and  Theorem \ref{T:alggp_unif_bd} that each $H/Z(H)$ is uniformly bounded.
Clearly $G$ is uncountable and it follows from Corollary \ref{C:uniformly simple quotients}\ref{I:uniformly simple quotients:non-embedding} that $G$ is not isomorphic to a subgroup of $H$.

(\ref{I:non embedding in ham trees}) Let $T$ be such a tree.  Then
$G=\OP{Aut}(T)$  is uncountable because it acts transitively on the boundary of
$T$ which is a Cantor set.  It follows from \cite[Theorem 3.4]{MR3085032} that
$G$ is simple and uniformly bounded and we can apply Corollary \ref{C:uniformly
simple quotients}\ref{I:uniformly simple quotients:non-embedding} to show that
$G$ is not isomorphic to any subgroup of $H$.

(\ref{I:non embedding Diff})
It is shown in \cite[Theorem 1.11(i)]{MR2509711} that every conjugation-invariant norm on $G=\OP{Diff}_0(N)$ is discrete and $G$ is clearly uncountable.
Hence, this group cannot be a subgroup of $\OP{Ham}(M,\omega)$.
\end{proof}

\subsection*{Acknowledgements} We thank Philip Dowerk, \'Swiatos\l aw Gal,
\'Etienne Ghys, Vincent Humili\`ere, Morimichi Kawasaki, Nicolas Monod, Dave
Morris, Leonid Polterovich and Yehuda Shalom for helpful comments and for
answering our questions.  This work was funded by Leverhulme Trust Research
Project Grant RPG-2017-159.

%
%
%
\section{Norms and boundedness}\label{S:preliminaries}

In this section we introduce the central concepts of this paper: {\em strong boundedness} and {\em uniform boundedness}.

\subsection*{Conjugation-invariant norms}
Let $G$ be a group.
A {\em norm} on $G$ is a non-negative valued function $\nu \colon G \to \B R$ such that
\begin{itemize}
\item[(a)] $\nu(g)=0 \iff g=1$.
\item[(b)] $\nu(g^{-1})=\nu(g)$ for all $g \in G$. 
\item[(c)] $\nu(gh) \leq \nu(g)+\nu(h)$ for all $g,h \in G$.
\end{itemize}
We call $\nu$ \emph{conjugation-invariant} or \emph{bi-invariant} if in addition
\begin{itemize}
\item[(d)] $\nu(ghg^{-1})=\nu(h)$ for all $g,h \in G$.
\end{itemize}
A conjugation-invariant norm $\nu$ gives rise to a metric $d(x,y)=\nu(xy^{-1})$ invariant under left and right translation; the converse is also true.  It is easily checked that $d(x^{-1}, y^{-1})= d(x,y)$ and $d(x_1y_1, x_2y_2)\leq d(x_1,x_2)+ d(y_1,y_2)$ for all $x,y,x_1,x_2, y_1,y_2\in G$, so $G$ together with the topology induced by $d$ is a topological group.

We say that $\nu$ is {\em discrete} if it induces the discrete metric on $G$, i.e., if $\inf \{ \nu(g) : 1 \neq g \in G \} >0$.
The following result is elementary and is left to the reader.

\begin{lemma}\label{L:quotient of conj inv norms}
Let $\pi \colon G \to H$ be a group epimorphism and $\nu$ a conjugation-invariant norm on $G$.
Assume that the restriction of $\nu$ to $\ker \pi$ is discrete. 
Then the function $\nu' \colon H \to \B R$ defined by
\[
\nu'(h)=\inf\,\{ \nu(g) : g \in \pi^{-1}(h)\}
\]
is a conjugation-invariant norm on $H$.
\end{lemma}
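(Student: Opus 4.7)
The plan is to verify each of the four axioms (a)--(d) of a conjugation-invariant norm for $\nu'$ in turn. Axioms (b), (c), (d) are essentially formal consequences of the corresponding properties of $\nu$ and do not need the discreteness hypothesis; the whole content of the lemma sits in axiom (a), the positivity requirement.

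I would dispose of the easy axioms as follows. For (b), the map $g \mapsto g^{-1}$ is a $\nu$-preserving bijection $\pi^{-1}(h) \to \pi^{-1}(h^{-1})$, so the two infima defining $\nu'(h)$ and $\nu'(h^{-1})$ coincide. For (c), fix $\varepsilon > 0$ and choose $g_i \in \pi^{-1}(h_i)$ with $\nu(g_i) < \nu'(h_i) + \varepsilon/2$ for $i=1,2$; since $g_1 g_2 \in \pi^{-1}(h_1 h_2)$ one obtains
\[
\nu'(h_1 h_2) \le \nu(g_1 g_2) \le \nu(g_1) + \nu(g_2) < \nu'(h_1) + \nu'(h_2) + \varepsilon,
\]
and letting $\varepsilon \to 0$ yields the triangle inequality. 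For (d), since $\pi$ is surjective, pick any $y \in \pi^{-1}(x)$; then $\pi^{-1}(xhx^{-1}) = y\pi^{-1}(h)y^{-1}$, and the conjugation invariance of $\nu$ on $G$ shows that the infimum is unaffected. One direction of (a), namely $\nu'(1) = 0$, is immediate from $1 \in \pi^{-1}(1)$ and $\nu(1) = 0$.

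The main obstacle is the remaining implication $\nu'(h)=0 \Rightarrow h=1$, where the discreteness hypothesis must be used. The plan is to argue by contradiction: let $\varepsilon > 0$ be such that $\nu(k) \ge \varepsilon$ for every nontrivial $k \in \ker\pi$, and suppose $h \in H$ satisfies $\nu'(h) = 0$. Choose $g_n \in \pi^{-1}(h)$ with $\nu(g_n) \to 0$, and pick $N$ so large that $\nu(g_n) + \nu(g_m) < \varepsilon$ for all $n,m \ge N$. Since $g_n g_m^{-1} \in \ker\pi$ and, by the triangle inequality, $\nu(g_n g_m^{-1}) < \varepsilon$, discreteness forces $g_n g_m^{-1} = 1$, i.e., the sequence $(g_n)_{n\ge N}$ is constant, equal to some $g^*$. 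Then $\nu(g^*) = \lim_n \nu(g_n) = 0$, so $g^* = 1$ by axiom (a) for $\nu$, and therefore $h = \pi(g^*) = 1$, as required.
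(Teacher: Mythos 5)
Your proof is correct, and it uses the discreteness hypothesis in exactly the right place: the only non-formal point is showing $\nu'(h)=0\Rightarrow h=1$, and your argument — that two preimages of $h$ with sufficiently small norm differ by an element of $\ker\pi$ of norm below the discreteness gap, hence coincide — handles it cleanly. The paper explicitly leaves this lemma to the reader, so there is no proof to compare against; yours is the standard argument and fills the gap correctly.
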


\begin{definition}
A group $G$ is called {\em bounded} if the diameter of every conjugation-invariant norm on $G$ is finite.
\end{definition}

This concept has been studied by Burago, Ivanov and Polterovich in
\cite{MR2509711} and is, in some sense, the starting point of our
investigation. Dowerk and Thom studied boundedness properties of
the projective unitary group $\OP{PU}(M)$, where $M$ is a von Neumann
factor \cite{MR3920346,MR3907832}.

\subsection*{Word norms}
Let $X$ be a subset of a group $G$.
Let $\conj_H(X^{\pm 1})$ be the set of all $g \in G$ that are conjugate by an element of $H \leq G$ to some element of $X$ or its inverse.
Define for any $g \in G$
\[
\| g \|_X \defeq \inf \, \{ n : \text{$g=y_1\cdots y_n$ for some $y_1,\dots,y_n \in \conj_G(X^{\pm 1})$} \}
\]
Notice that $\|g\|_X=\infty$ if $g \notin \lAngle X \rAngle$, the normal subgroup generated by $X$.
For any $n \geq 0$ define
\[
B_X(n) \defeq \{ g \in G \ : \ \| g \|_X \leq n \}. 
\]
If we want to make it clear what the ambient group is we will sometimes write $B_X^G(n)$.
It is clear that $\{1\} = B_X(0) \subseteq B_X(1) \subseteq B_X(2) \subseteq \dots$ and that $\bigcup_{n \geq 0} B_X(n) = \lAngle X \rAngle$.
The following result is elementary.

\begin{lemma}\label{L:BXn properties}
Let $G$ be a group, let $X,Y \subseteq G$ and let $n,m \in \B N$.
Then
\begin{enumerate}[label=(\roman*)]
\item
\label{L:BXn:inv}
$B_X(n)^{-1}=B_X(n)$ and $B_X(n)$ is invariant under conjugation in $G$.

\item
\label{L:BXn:incl}
$X \subseteq Y \implies B_X(n) \subseteq B_Y(n)$.

\item
\label{L:BXn:mult}
$B_X(n) B_X(m) = B_X(n+m)$.

\item
\label{L:BXn:assoc}
$Y \subseteq B_X(n) \implies B_Y(m) \subseteq B_X(mn)$.

\item
\label{L:BXn:quotient}
If $\pi \colon G \to H$ is an epimorphism then $B^{H}_{\pi(X)}(n)=\pi(B^G_X(n))$ for any $X \subseteq G$.

\item
\label{L:BXn:quotient 2}
If $\pi \colon G \to H$ is an epimorphism then $B^{G}_{\pi^{-1}(Y)}(n)=\pi^{-1}(B^H_Y(n))$ for any $Y \subseteq H$.

\end{enumerate}
\end{lemma}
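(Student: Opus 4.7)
The plan is to verify each item by direct manipulation of factorizations $g = y_1 \cdots y_n$ with $y_i \in \conj_G(X^{\pm 1})$; the lemma is essentially bookkeeping from the definition of $\|\cdot\|_X$. Items \ref{L:BXn:inv}--\ref{L:BXn:mult} are immediate: inversion reverses the factorization and $\conj_G(X^{\pm 1})$ is already closed under inversion and conjugation, giving \ref{L:BXn:inv}; $X\subseteq Y$ gives $\conj_G(X^{\pm 1})\subseteq \conj_G(Y^{\pm 1})$, giving \ref{L:BXn:incl}; and \ref{L:BXn:mult} follows by concatenating two factorizations (the $\subseteq$ direction) and by splitting a factorization of length $n+m$ after the $n$-th factor (the $\supseteq$ direction).

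For \ref{L:BXn:assoc}, I would take $g = z_1\cdots z_m$ with $z_i = h_i y_i^{\pm 1} h_i^{-1}$, $y_i\in Y$. By hypothesis $y_i\in B_X(n)$, and \ref{L:BXn:inv} gives $z_i\in B_X(n)$; applying \ref{L:BXn:mult} inductively then yields $g\in B_X(mn)$.

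For \ref{L:BXn:quotient} and \ref{L:BXn:quotient 2}, the key preliminary observation is
\begin{align*}
\pi(\conj_G(X^{\pm 1})) &= \conj_H(\pi(X)^{\pm 1}), \\
\conj_G(\pi^{-1}(Y)^{\pm 1}) &= \pi^{-1}(\conj_H(Y^{\pm 1})),
\end{align*}
both using surjectivity of $\pi$ together with the fact that $\ker \pi$ is normal, so that $\pi^{-1}(Y)$ is stable under left and right multiplication by $\ker \pi$. Granted these, \ref{L:BXn:quotient} reduces to applying $\pi$ to an $X$-factorization (one direction) or lifting a $\pi(X)$-factorization term by term (the other). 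For \ref{L:BXn:quotient 2}, the inclusion $B^G_{\pi^{-1}(Y)}(n)\subseteq \pi^{-1}(B^H_Y(n))$ is immediate from the first identity. For the reverse, starting from $\pi(g) = w_1\cdots w_n$ with $w_i\in \conj_H(Y^{\pm 1})$, I would pick arbitrary preimages $z_1,\dots,z_{n-1}\in \conj_G(\pi^{-1}(Y)^{\pm 1})$ of $w_1,\dots,w_{n-1}$ and set $z_n := (z_1\cdots z_{n-1})^{-1}g$; then $\pi(z_n) = w_n$, so $z_n$ automatically lies in $\conj_G(\pi^{-1}(Y)^{\pm 1})$ by the second identity, yielding $g\in B^G_{\pi^{-1}(Y)}(n)$.

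The only mildly delicate point is this last step of \ref{L:BXn:quotient 2}: one must simultaneously arrange $g = z_1\cdots z_n$ on the nose and each $z_i\in \conj_G(\pi^{-1}(Y)^{\pm 1})$, and this is precisely where the coset structure of $\pi^{-1}(Y)$ over $\ker\pi$ enters. Everything else is a routine unwrapping of definitions.
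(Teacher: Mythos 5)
The paper gives no proof of this lemma (it is declared elementary), and your direct verification of items (i)--(v) is exactly the intended argument and is correct. The one genuine wrinkle is in the reverse inclusion of (vi): you start from a factorization $\pi(g)=w_1\cdots w_n$ of length exactly $n$, but if $g\in\ker\pi$ then $\pi(g)=1$ and the only factorization available may be the empty one, so your lifting recipe has nothing to lift. To cover such $g$ one must pad, e.g.\ write $1=w\,w^{-1}$ for some $w\in\conj_H(Y^{\pm 1})$ and run your argument with two factors; this requires $n\ge 2$ and $Y\ne\emptyset$. Indeed for $n=1$ the stated equality can genuinely fail: with $G=\B Z$, $H=\B Z/2$, $\pi$ the reduction map and $Y$ the nontrivial class, one has $\pi^{-1}(B^H_Y(1))=G$ while $B^G_{\pi^{-1}(Y)}(1)$ consists of $0$ and the odd integers only. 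This is really a defect of the statement for small $n$ rather than of your approach, and it is harmless for the paper's applications (which invoke (vi) only through the union over all $n$, i.e.\ for normal generation of $\pi^{-1}(Y)$), but a complete write-up should treat the case where $\pi(g)$ admits only a factorization of length $k<n$, and in particular $k=0$, explicitly.
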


If $X \subset G$ {\em normally generates} $G$, i.e., $\lAngle X \rAngle =G$, then $g \mapsto \|g\|_X$ is a conjugation-invariant norm on $G$.
We will write $\| G\|_X$ for the diameter of the norm $\| \cdot \|_X$.
If $X=\{s\}$ is a singleton, we will write $\| g \|_s$ instead of $\| g\|_{\{s\}}$ and likewise $\| G \|_s$ instead of $\| G \|_{\{s\}}$.

\begin{proposition}\label{P:lipschitz-max}
Let $G$ be a group normally generated by a finite set $S$.
Let $\psi \colon G \to H$ be a homomorphism and $\nu$ a conjugation-invariant norm on $H$.
Then $\psi$ is Lipschitz with constant $C=\max \, \{ \nu(\psi(s)) : s \in S\}$: that is,
\[
\nu(\psi(g)) \leq C \| g\|_S \qquad \text{ for any $g \in G$.}
\]
\end{proposition}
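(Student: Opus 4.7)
The plan is to unwind the definition of $\|g\|_S$ directly and then use the three defining properties of a conjugation-invariant norm (symmetry under inversion, subadditivity, and conjugation-invariance) in sequence. Since $S$ normally generates $G$, every $g \in G$ has finite word length $n = \|g\|_S$, so by definition we can write
\[
g = y_1 y_2 \cdots y_n
\]
with each $y_i \in \operatorname{conj}_G(S^{\pm 1})$: that is, $y_i = g_i s_i^{\varepsilon_i} g_i^{-1}$ for some $g_i \in G$, $s_i \in S$ and $\varepsilon_i \in \{\pm 1\}$.

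Applying the homomorphism $\psi$ yields $\psi(g) = \psi(y_1)\cdots \psi(y_n)$, where $\psi(y_i) = \psi(g_i)\,\psi(s_i)^{\varepsilon_i}\,\psi(g_i)^{-1}$. Now I would invoke the norm axioms on $H$: first subadditivity gives $\nu(\psi(g)) \le \sum_{i=1}^n \nu(\psi(y_i))$; then conjugation-invariance (axiom (d)) gives $\nu(\psi(y_i)) = \nu(\psi(s_i)^{\varepsilon_i})$; and finally axiom (b) gives $\nu(\psi(s_i)^{\varepsilon_i}) = \nu(\psi(s_i))$. Each of these is therefore bounded above by $C = \max_{s \in S} \nu(\psi(s))$, so $\nu(\psi(g)) \le nC = C\|g\|_S$, as required.

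There is no real obstacle: this is a direct unpacking of the definitions, and the finiteness of $S$ is used only to ensure that $C$ is actually attained (so $C < \infty$). The one point worth being careful about is the infimum in the definition of $\|g\|_S$; the argument gives the bound $\nu(\psi(g)) \le nC$ for every admissible expression of $g$, and taking the infimum over such expressions yields the claimed inequality. Should $g \notin \lAngle S\rAngle = G$, the statement is vacuous because $\|g\|_S = \infty$, but since $S$ normally generates $G$ this case does not occur.
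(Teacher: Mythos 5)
Your proof is correct and follows essentially the same route as the paper's: decompose $g$ as a product of $n=\|g\|_S$ conjugates of elements of $S^{\pm 1}$, push through $\psi$, and apply subadditivity, conjugation-invariance and symmetry of $\nu$ to bound each factor by $C$. The extra care you take with the infimum in the definition of $\|g\|_S$ is harmless but not needed beyond noting that an infimum of nonnegative integers is attained.
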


\begin{proof}
Any $g \in G$ has the form $g=x_1\cdots x_n$ where $n=\|g\|_S$ and each $x_i$ is conjugate to some $s_i \in S$ or its inverse.
Since $\nu$ is a conjugation-invariant norm, $\nu(\psi(g)) \leq \sum_{i=1}^n \nu(\psi(x_i)) = \sum_{i=1}^n \nu(\psi(s_i)) \leq Cn=C\|g\|_S$.
\end{proof}

Call $G$ {\em finitely normally generated} if it is normally generated by a finite $S \subseteq G$.
Set
\begin{eqnarray*}
\Gamma_n(G) &=& \{ S \subseteq G \ :\ \text{$|S|\leq n$ and $S$ normally generates $G$}\}, \\
\Gamma(G) &=& \{ S \subseteq G \ :\ \text{$|S|<\infty$ and $S$ normally generates $G$}\}.
\end{eqnarray*}

In finitely normally generated groups, boundedness is determined by the behaviour of word norms $\| \cdot \|_X$.

\begin{corollary}\label{C:tfae bounded}
Let $G$ be a finitely normally generated group.
The following conditions are equivalent.
\begin{enumerate}
\item $G$ is bounded.
\item $\| G \|_S <\infty$ for some $S\in \Gamma(G)$.
\item $\|G\|_S<\infty$ for every $S\in \Gamma(G)$.
\qed
\end{enumerate}
\end{corollary}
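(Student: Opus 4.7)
The plan is to prove the chain $(1) \Rightarrow (3) \Rightarrow (2) \Rightarrow (1)$, leveraging Proposition~\ref{P:lipschitz-max} as the only nontrivial ingredient.

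For $(1) \Rightarrow (3)$, I would recall that for any $S \in \Gamma(G)$ the word-length function $\|\cdot\|_S$ is a conjugation-invariant norm on $G$ (this was noted just before Proposition~\ref{P:lipschitz-max}). If $G$ is bounded in the sense of Burago--Ivanov--Polterovich, then by definition every such norm has finite diameter, so $\|G\|_S < \infty$ for every $S \in \Gamma(G)$.

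The implication $(3) \Rightarrow (2)$ requires only that $\Gamma(G)$ be non-empty, which is precisely the hypothesis that $G$ is finitely normally generated.

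For $(2) \Rightarrow (1)$, which is the substantive direction, I would fix a finite normally generating set $S$ with $\|G\|_S < \infty$ and let $\nu$ be an arbitrary conjugation-invariant norm on $G$. Applying Proposition~\ref{P:lipschitz-max} to the identity homomorphism $\mathrm{id}\colon G \to G$ and the norm $\nu$, one obtains the Lipschitz estimate
\[
\nu(g) \leq C\,\|g\|_S \qquad \text{for all } g \in G,
\]
where $C = \max\{\nu(s) : s \in S\}$ is finite because $S$ is finite. Taking the supremum over $g \in G$ yields $\mathrm{diam}_\nu(G) \leq C \cdot \|G\|_S < \infty$. Since $\nu$ was arbitrary, $G$ is bounded. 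There is no real obstacle here; the whole argument is essentially an unpacking of definitions once Proposition~\ref{P:lipschitz-max} is available, which is why the paper flags this corollary as ``elementary.''
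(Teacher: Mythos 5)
Your proof is correct and follows exactly the same route as the paper: $(1)\Rightarrow(3)$ because each $\|\cdot\|_S$ is a conjugation-invariant norm, $(3)\Rightarrow(2)$ because $\Gamma(G)\neq\emptyset$, and $(2)\Rightarrow(1)$ by applying Proposition~\ref{P:lipschitz-max} to the identity map $\OP{id}\colon G\to G$. Nothing is missing; you have simply spelled out the Lipschitz estimate that the paper leaves implicit.
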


\begin{proof}
Clearly (1) $\implies$ (3) since $\| \cdot \|_S$ is a conjugation-invariant norm, and (3) $\implies$ (2) is trivial since $\Gamma(G) \neq \emptyset$.
To prove (2) $\implies$ (1) apply Proposition~\ref{P:lipschitz-max} to $\OP{id} \colon G \to G$.
\end{proof}

\subsection*{Strong and uniform boundedness}
In light of Corollary \ref{C:tfae bounded} we refine the notion of boundedness.

\begin{definition}\label{D:strongly and uniformly bounded}
Let $G$ be a finitely normally generated group.
Set
\begin{eqnarray*}
&& \Delta(G) = \OP{sup} \{ \OP{diam}(\nu_S) \ : \ S \in \Gamma(G)\} \\
&& \Delta_n(G) = \OP{sup} \{ \OP{diam}(\nu_S) \ : \ S \in \Gamma_n(G)\}
\end{eqnarray*}
where $\Delta_n(G)=-\infty$ if $\Gamma_n(G)=\emptyset$.
We say that $G$ is \emph{strongly bounded} if $\Delta_n(G)<\infty$ for all $n$.
We say that $G$ is \emph{uniformly bounded} if $\Delta(G) < \infty$.
\end{definition}

Clearly, $\Delta_1(G) \leq \Delta_2(G) \leq \dots$ and 
\[
\Delta(G)= \sup_{n \geq 1} \Delta_n(G) = \lim_{n \to \infty} \Delta_n(G).
\]

\begin{example}\label{E:deltas of simple groups}
Let $G$ be a (non-trivial) simple group.
Then $G$ is normally generated by any non-identity element.
Any $S \in \Gamma(G)$ must contain some $1 \neq x \in S$ and $\|g\|_S \leq \| g \|_x \leq \|G\|_x \leq  \Delta_1(G)$ for any $g \in G$, hence $\|G\|_S \leq \Delta_1(G)$. 
Since $S$ was arbitrary,
\[
\Delta(G)=\Delta_1(G).
\]
\end{example}

\subsection*{Subgroups, quotient, extensions}
Strong and uniform boundedness don't behave well with respect to subgroups.

\begin{example}
Uniformly bounded groups may contain unbounded normal subgroups of finite index.
An example is the inclusion of $\B Z$, which is clearly unbounded, in the infinite dihedral group $G=\B Z/2 \ltimes \B Z$.
To see that $G$ is uniformly bounded, let $N=2 \B Z$ and $K=\B Z/2 \ltimes 2 \B Z$.
The conjugacy class of any $y \notin \B Z$ is the coset $yN$ and therefore $N \subseteq B_y(2)$.
Then $G$ is finitely normally generated since $[G:N]=4$.
If $S \in \Gamma(G)$ then its image $T$ in $G/N \cong \B Z/2 \times \B Z/2$ normally generates it and clearly $\|G/N\|_T\leq 2$.
By Lemma \ref{L:BXn properties}\ref{L:BXn:quotient} the image of $B_S(2)$ in $G/N$ is $B_T(2)$, hence $G=B_S(2) \cdot N \subseteq B_S(2)B_S(2)=B_S(4)$.
Since $S$ was arbitrary, $\Delta(G) \leq 4$.  (In fact, it is not hard to show that $\Delta(G)= 3$.)
\end{example}

Quotients of strongly and uniformly bounded groups are better behaved.

\begin{lemma}\label{L:finite gen quotient not strongly bounded}
Suppose that $G$ is normally generated by $n$ elements.
Let $\pi \colon G \to H$ be an epimorphism.
\begin{enumerate}[label=(\alph*)]
\item 
\label{I:finite gen quotient not strongly bounded:1}
If $G$ is bounded, then $H$ is bounded.
\item
\label{I:finite gen quotient not strongly bounded:2}
$\Delta_k(H) \leq \Delta_{n+k}(G)$ for all $k \geq 1$.
In particular, if $G$ is strongly (resp. uniformly) bounded then so is $H$.
\end{enumerate}
\end{lemma}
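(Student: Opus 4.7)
For part (a), I would work directly with a normally generating set $T \in \Gamma_n(G)$. Its image $\pi(T)$ lies in $\Gamma_n(H)$, so $H$ is finitely normally generated. By Lemma \ref{L:BXn properties}\ref{L:BXn:quotient} we have $B^H_{\pi(T)}(m) = \pi(B^G_T(m))$, hence $\|H\|_{\pi(T)} \leq \|G\|_T$. If $G$ is bounded then $\|G\|_T$ is finite by Corollary \ref{C:tfae bounded}, and the same corollary, applied to $H$, shows that $H$ is bounded.

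For part (b), fix $T = \{t_1, \ldots, t_n\} \in \Gamma_n(G)$ and an arbitrary $S \in \Gamma_k(H)$. The plan is to exhibit a normally generating set $U$ of $G$ of size at most $n+k$ whose image in $H$ is essentially $S$, so that Lemma \ref{L:BXn properties}\ref{L:BXn:quotient} yields $\|H\|_S \leq \|G\|_U \leq \Delta_{n+k}(G)$ at once. The naive choice $U = \tilde S \cup T$, where $\tilde S$ is any lift of $S$, fails because $\pi(U) = S \cup \pi(T)$ is typically strictly larger than $S$. I would fix this by modifying each $t_i$ inside its $\ker \pi$-coset: since $S$ normally generates $H$, for each $i$ write $\pi(t_i) = \prod_j \alpha_{ij} s_{ij}^{\epsilon_{ij}} \alpha_{ij}^{-1}$ as a product of conjugates of $S^{\pm 1}$, then lift this expression to $\tilde w_i = \prod_j \tilde\alpha_{ij} \tilde s_{ij}^{\epsilon_{ij}} \tilde\alpha_{ij}^{-1} \in G$ using any preimages of the $\alpha_{ij}$ and the chosen lifts $\tilde s_{ij} \in \tilde S$. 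Then $t'_i \defeq t_i \tilde w_i^{-1}$ lies in $\ker \pi$.

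Setting $U = \tilde S \cup \{t'_1, \ldots, t'_n\}$, we have $|U| \leq n+k$. Since $\tilde w_i \in \lAngle \tilde S \rAngle \subseteq \lAngle U \rAngle$, we get $t_i = t'_i \tilde w_i \in \lAngle U \rAngle$, forcing $T \subseteq \lAngle U \rAngle$ and hence $U \in \Gamma_{n+k}(G)$ with $\|G\|_U \leq \Delta_{n+k}(G)$. By construction $\pi(U) \subseteq S \cup \{1\}$, and adjoining the identity to a normally generating set does not change the associated word norm, so Lemma \ref{L:BXn properties}\ref{L:BXn:quotient} gives $B^H_S(m) = \pi(B^G_U(m))$ for every $m$. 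Lifting any $h \in H$ to some $g \in \pi^{-1}(h)$ then yields $\|h\|_S \leq \|g\|_U \leq \|G\|_U \leq \Delta_{n+k}(G)$, establishing $\Delta_k(H) \leq \Delta_{n+k}(G)$; the strong- and uniform-boundedness statements follow since $\Delta_k(H) \leq \Delta_{n+k}(G) \leq \Delta(G)$ for every $k$. The main obstacle is the construction of $U$, which requires using that $S$ normally generates $H$ at the word level, and lifting those expressions to $G$ in order to cancel the contribution of $T$ to $\pi(U)$ while keeping $U$ a normal generating set of $G$.
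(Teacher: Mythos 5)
Your proof is correct and follows essentially the same route as the paper: for (a) you push a word norm through $\pi$ via Lemma \ref{L:BXn properties}\ref{L:BXn:quotient}, and for (b) you build the normally generating set $U=\tilde S\cup\{t_i\tilde w_i^{-1}\}$ of size $\le n+k$ by correcting each $t_i$ by a lift of a word in conjugates of $S^{\pm1}$ representing $\pi(t_i)$, exactly as the paper's set $Z=\sigma(X)\cup\{y_iw_i^{-1}\}$. The only cosmetic difference is that the paper obtains the elements $w_i$ abstractly from the surjection $\pi(B^G_{\sigma(X)}(r))=B^H_X(r)$ rather than by lifting an explicit word.
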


\begin{proof}
Let $Y=\{y_1,\dots,y_n\} \in \Gamma_n(G)$. 

{\em Claim:}
Let $X \in \Gamma_k(H)$.
Then  $\|H\|_X \leq \| G\|_Z$ for some $Z \in \Gamma_{n+k}(G)$.

{\em Proof:} Choose a set-theoretic section $\sigma \colon H \to G$.
Since $Y$ is finite, $\pi(Y) \subseteq B_X^H(r)$ for some $r \geq 1$.
By Lemma \ref{L:BXn properties}\ref{L:BXn:quotient}, $\pi(B_{\sigma(X)}^G(r)) =B_X^H(r)$.
Therefore there are $w_1,\dots,w_n \in B_{\sigma(X)}^G(r)$ such that $\pi(w_i)=\pi(y_i)$. 
Set
\[
Z=\sigma(X) \cup \{ y_iw_i^{-1} \ : \ 1 \leq i \leq n\}.
\]
By Lemma \ref{L:BXn properties}\ref{L:BXn:mult}, $y_i = y_iw_i^{-1} \cdot w_i \in B_Z(1+r)$, and therefore $Z$ normally generates $G$ because $Y$ does.
Thus, $Z \in \Gamma_{n+k}(G)$ and for any $g \in G$,
\[
\| \pi(g) \|_X = \| \pi(g) \|_{X \cup \{1\}}  =  \| \pi(g) \|_{\pi(Z)} \leq \| g \|_Z \leq \|G \|_Z.
\]
Since $\pi$ is surjective, $\| H \|_X \leq \| G\|_Z$, which proves the claim.

Clearly, $\pi(Y)$ is a finite normal generating set for $H$; \ref{I:finite gen quotient not strongly bounded:1} follows from the claim and  Corollary \ref{C:tfae bounded}.
To prove \ref{I:finite gen quotient not strongly bounded:2}, let $X \in \Gamma_k(H)$.
The claim implies that $\|H\|_X \leq \| G \|_Z \leq \Delta_{k+n}(G)$.
The inequality follows by taking the supremum over all $X$.
\end{proof}

Extensions of strongly and uniformly bounded groups behave well under some finiteness assumptions.

\begin{lemma}
\label{L:finite_extn new}
Let $H$ be a finitely normally generated group.
Let $G\xto{\pi} H$ be a group epimorphism with finite kernel $N$ of order $n$.
Then $G$ is finitely normally generated, and
\begin{enumerate}[label=(\alph*)]
\item
\label{I:finite_extn new:1}
If $H$ is bounded then $G$ is bounded.
\item
\label{I:finite_extn new:2}
For any $k \geq 1$,
\[
\Delta_k(G) \leq (2n-1)\Delta_k(H)+n-1.
\]
\end{enumerate}
\noindent Hence, if $H$ is strongly (resp., uniformly) bounded then so is $G$.
\end{lemma}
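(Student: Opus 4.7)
The plan has two phases: first, show $G$ is finitely normally generated; second, prove the quantitative bound in~\ref{I:finite_extn new:2}, from which~\ref{I:finite_extn new:1} will follow by essentially the same argument. For finite normal generation, I would lift a finite normally generating set $T$ of $H$ to a set $\widetilde T \subseteq G$ and take $\widetilde T \cup N$; this is finite, its normal closure in $G$ contains $N$ (already normal) and projects onto $\lAngle T \rAngle = H$, so it equals $G$.

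For the bound in~\ref{I:finite_extn new:2}, I would fix $S \in \Gamma_k(G)$, set $\overline S := \pi(S) \in \Gamma_k(H)$, and put $D := \|H\|_{\overline S} \leq \Delta_k(H)$. By Lemma~\ref{L:BXn properties}\ref{L:BXn:quotient} we have $\pi(B_S^G(D)) = B_{\overline S}^H(D) = H$, so $G = B_S^G(D) \cdot N$ and every $g \in G$ satisfies $\|g\|_S \leq D + \|N\|_S$. The problem thus reduces to proving $\|N\|_S \leq (n-1)(2D+1)$.

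The key step is to show that the symmetric subset $T := B_S^G(2D+1) \cap N$ generates $N$ as a subgroup. Given $u \in N$, I write $u = y_1 \cdots y_r$ as a product of elements of $\conj_G(S^{\pm 1})$; I set $p_i := y_1 \cdots y_i$, so that $\pi(p_0) = \pi(p_r) = 1$, and using Lemma~\ref{L:BXn properties}\ref{L:BXn:quotient} again I choose lifts $\tilde h_i \in B_S^G(D)$ of $\pi(p_i)$ with $\tilde h_0 = \tilde h_r = 1$. Setting $u_i := p_i \tilde h_i^{-1} \in N$, the identity $u_i u_{i-1}^{-1} = p_i(\tilde h_i^{-1} \tilde h_{i-1} y_i) p_i^{-1}$ combined with conjugation-invariance of the norm yields $\|u_i u_{i-1}^{-1}\|_S \leq 2D+1$, so $u_i u_{i-1}^{-1} \in T$. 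The telescoping identity $u = (u_r u_{r-1}^{-1})\cdots(u_1 u_0^{-1})$ then expresses $u$ as a product of elements of $T$, proving that $\langle T \rangle = N$.

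Because $T$ is symmetric and generates the finite group $N$ of order $n$, the Cayley graph of $N$ with respect to $T$ is connected of diameter at most $n-1$, so every $u \in N$ is a product of at most $n-1$ elements of $T$, giving $\|u\|_S \leq (n-1)(2D+1)$. Substituting into $\|G\|_S \leq D + \|N\|_S$ yields $\|G\|_S \leq (2n-1)D + (n-1)$; taking the supremum over $S \in \Gamma_k(G)$ proves~\ref{I:finite_extn new:2}, and running the same argument with arbitrary $S \in \Gamma(G)$ and $D = \|H\|_{\overline S} < \infty$ (finite because $H$ is bounded) proves~\ref{I:finite_extn new:1}. The main obstacle is the telescoping construction: the partial products $p_i$ do not lie in $N$, so they must be ``corrected'' by lifts $\tilde h_i$ of $\pi(p_i)\in H$, and the delicate check is that each consecutive difference $u_i u_{i-1}^{-1}$ accumulates only two $D$-sized lifts plus the single $S$-edge $y_i$, and no more.
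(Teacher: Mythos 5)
Your proof is correct, and it reaches the paper's exact bound $(2n-1)\Delta_k(H)+n-1$ by a genuinely different mechanism. The paper argues by pigeonhole on the chain of subsets $N\cap B_S^G(m)$: it shows that as long as $B_S^G(m+2d+1)\neq G$, the intersection $N\cap B_S^G(m+2d+1)$ strictly exceeds $N\cap B_S^G(m)$ (the new element of $N$ being $g\tilde g^{-1}$ for a suitable $g$ on the ``boundary'' of a ball and a lift $\tilde g\in B_S^G(d)$), so after at most $n-1$ increments of size $2d+1$ the ball swallows $N$ and hence $G$. You instead prove a quantitative Schreier-type lemma: the telescoping differences $u_iu_{i-1}^{-1}=p_i(\tilde h_i^{-1}\tilde h_{i-1}y_i)p_i^{-1}$ are precisely Schreier generators of the finite-index (indeed finite) subgroup $N$, each of $S$-norm at most $2D+1$ by conjugation-invariance, and then the diameter bound $n-1$ for the Cayley graph of the order-$n$ group $N$ converts generation into the norm bound $\|N\|_S\leq(n-1)(2D+1)$. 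Both arguments ultimately manufacture elements of $N$ of the form (conjugate of) $\mathrm{lift}^{-1}\cdot\mathrm{lift}\cdot\mathrm{generator}$, but yours isolates an explicit bounded-norm generating set of $N$, which is a reusable statement in its own right, while the paper's ball-growth induction is slightly more economical. Two cosmetic points you should still record: the case $\Gamma_k(G)=\emptyset$ (where the inequality is vacuous under the convention $\Delta_k(G)=-\infty$), and the justification that $\pi(S)\in\Gamma_k(H)$, which is Lemma~\ref{L:BXn properties}\ref{L:BXn:quotient}. Neither affects correctness.
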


\begin{proof}
Choose $S \in \Gamma_k(H)$ for some $k\geq 1$.
It follows from Lemma \ref{L:BXn properties}\ref{L:BXn:quotient 2} that $\pi^{-1}(S) \in \Gamma_{kn}(G)$ and in particular $G$ is finitely normally generated.
Lemma \ref{L:BXn properties}\ref{L:BXn:quotient} also shows that $\|g\|_{\pi^{-1}(S)} \leq \|\pi(g)\|_S \leq \| H\|_S$ for any $g \in G$, so $\|G\|_{\pi^{-1}(S)} \leq \|H\|_S$.
Item \ref{I:finite_extn new:1} now follows from Corollary \ref{C:tfae bounded}.


\ref{I:finite_extn new:2}
If $\Gamma_k(G)$ is empty then the inequality is trivial so we assume otherwise.
Choose some $S \in \Gamma_k(G)$.
Then $\pi(S) \in \Gamma_k(H)$ and we set $d=\| H\|_{\pi(S)}$.
We claim that for any $m \geq 0$,
\begin{equation}\label{E:finite_extn new:induction}
B_S^G(m+2d+1)=G \text{ \ or \ } N \cap B^G_S(m+2d+1) \supsetneq N \cap B^G_S(m).
\end{equation}
Assume that $B_S^G(m+2d+1)\subsetneq G$.
Then $B_S^G(m+d) \subsetneq G$, and since $S$ normally generates $G$, it follows that $B_S^G(m+d) \subsetneq B_S^G(m+d+1)$.
Let $g\in B_S^G(m+d+1)$ such that $g\not\in B_S^G(m+d)$.
Since $\pi(B^G_S(d))=H$ by Lemma \ref{L:BXn properties}\ref{L:BXn:quotient}, there exist $\tilde{g} \in B^G_S(d)$ such that $g \tilde{g}^{-1} \in N$.
We claim that $g \tilde{g}^{-1} \notin B^G_S(m)$: for otherwise $g=g\tilde{g}^{-1} \cdot \tilde{g} \in B^G_S(m+d)$ by Lemma \ref{L:BXn properties}\ref{L:BXn:mult}, which contradicts the choice of $g$.
In addition, $g\tilde{g}^{-1} \in B^G_S((m+d+1)+d)$ by Lemma \ref{L:BXn properties}\ref{L:BXn:mult}, so $g$ belongs to $N \cap B^G_S(m+2d+1)$ but not to $N \cap B^G_S(m)$.  This proves \eqref{E:finite_extn new:induction}.

Since $B_S^G(0)=\{1\} \subseteq N$ and since $|N|=n$, repeated application of \eqref{E:finite_extn new:induction} shows that $B_S^G((n-1)(2d+1)) \supseteq N$.
Since $\pi(B_S^G(d))=H$ we deduce from Lemma \ref{L:BXn properties}\ref{L:BXn:mult} that $G=N \cdot B_S^G(d) \subseteq B_S^G((n-1)(2d+1)+d)$.
Since $d = \| H \|_{\pi(S)} \leq \Delta_k(H)$,
\[
\|G \|_S \leq d+(n-1)(2d+1) \leq \Delta_k(H) (2n-1)+n-1.
\]
The inequality in \ref{I:finite_extn new:2} follows.
\end{proof}

\begin{lemma}\label{L:product of simple uniformly bounded}
Let $G_1,\dots,G_n$ be finitely normally generated groups.
Then $G=G_1 \times \dots \times G_n$ is finitely normally generated and
\begin{enumerate}[label=(\alph*)]
\item
If the groups $G_i$ are  bounded then $G$  bounded.
\label{I:fcg bounded for product}
\item
$\Delta_{k_1+\dots+k_n}(G) \geq \sum_i \Delta_{k_i}(G_i)$.
\label{I:product delta geq sum delta}
\item
If the groups $G_i$ are simple and uniformly bounded then $G$ is uniformly bounded.
\label{I:u bounded for product of simple}
\end{enumerate}
\end{lemma}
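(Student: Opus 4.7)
My plan is to handle finite normal generation, (a), and (b) via one concrete generating set, and to attack (c) by a commutator construction in each non-abelian factor combined with a quotient argument for the abelian factors. Let $\iota_i \colon G_i \hookrightarrow G$ be the canonical inclusion and $\pi_i \colon G \to G_i$ the projection, and for any choice of finite normal generating sets $S_i \subseteq G_i$ set $S := \bigsqcup_i \iota_i(S_i)$. Then $S$ is finite and normally generates $G$. For (a), I would decompose $g = (g_1,\ldots,g_n)$ as $\iota_1(g_1) \cdots \iota_n(g_n)$ and use that $\iota_i$ is a homomorphism sending $S_i$ into $S$ to bound $\|g\|_S \leq \sum_i \|g_i\|_{S_i}^{G_i} \leq \sum_i \|G_i\|_{S_i} < \infty$; Corollary~\ref{C:tfae bounded} then finishes (a). For (b) with $S_i \in \Gamma_{k_i}(G_i)$, the same $S$ lies in $\Gamma_{k_1+\cdots+k_n}(G)$; the key observation is that each $\iota_i(G_i)$ is normal in $G$ and distinct factors commute, so in any expression $g = y_1 \cdots y_m$ with $m = \|g\|_S$ and $y_j \in \conj_G(S^{\pm 1})$ each $y_j$ belongs to a unique $\iota_i(G_i)$, and regrouping by factor yields $\sum_i \|g_i\|_{S_i} \leq m = \|g\|_S$. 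Taking the supremum over $g$ and then independently over each $S_i$ gives (b).

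For (c), the main difficulty is that the natural commutator trick produces only the identity in an abelian factor. I would address this by splitting $\{1,\ldots,n\}$ into $N = \{i : G_i \text{ non-abelian}\}$ and $A = \{i : G_i \text{ abelian}\}$; since any abelian simple group is cyclic of prime order, $Q := \prod_{i \in A} G_i$ is finite. Fix arbitrary $S \in \Gamma(G)$. For each $i \in N$, pick $s_i \in S$ with $\pi_i(s_i) \neq e$ (which exists since $\pi_i(S)$ normally generates the nontrivial $G_i$) and $h_i \in G_i$ with $[\pi_i(s_i), h_i] \neq e$ (which exists since $\pi_i(s_i)$ is non-central in the non-abelian simple $G_i$). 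Then $T_i := [s_i, \iota_i(h_i)]$ lies in $\iota_i(G_i) \setminus \{e\}$ because $\iota_i(h_i)$ is trivial outside coordinate $i$, and $\|T_i\|_S \leq 2$ via $T_i = s_i \cdot (\iota_i(h_i) s_i^{-1} \iota_i(h_i)^{-1})$. Simplicity of $G_i$ together with Example~\ref{E:deltas of simple groups} gives $\iota_i(G_i) \subseteq B_{T_i}^G(\Delta(G_i)) \subseteq B_S^G(2\Delta(G_i))$ via Lemma~\ref{L:BXn properties}\ref{L:BXn:assoc}.

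The abelian factors I would handle simultaneously via the projection $\pi_Q \colon G \twoheadrightarrow Q$: since $Q$ is finite and $\pi_Q(S)$ normally generates it, every $a \in Q$ admits a lift in $B_S^G(|Q|-1)$ by Lemma~\ref{L:BXn properties}\ref{L:BXn:quotient}. For arbitrary $g \in G$, lift $\pi_Q(g)$ to $\tilde a \in B_S^G(|Q|-1)$; then $g \tilde a^{-1} \in \ker \pi_Q = \prod_{i \in N} \iota_i(G_i)$, a product of commuting subgroups each sitting inside $B_S^G(2\Delta(G_i))$, so $\|g\|_S \leq (|Q|-1) + 2 \sum_{i \in N} \Delta(G_i)$, a bound that is independent of $S$. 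Hence $\Delta(G) < \infty$ and $G$ is uniformly bounded. The step I expect to be hardest is the unification in (c): the commutator produces nothing in an abelian factor, so the finite quotient $Q$ must carry that content, and one must verify that the residual element $g \tilde a^{-1}$ really sits in the non-abelian part where the commutator construction applies.
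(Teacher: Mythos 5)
Your proof is correct and, for parts (a), (b) and the heart of (c) --- the commutator trick producing, for each non-abelian factor, a normal generator $T_i=[s_i,\iota_i(h_i)]$ with $\|T_i\|_S\le 2$, combined with Example~\ref{E:deltas of simple groups} and Lemma~\ref{L:BXn properties}\ref{L:BXn:assoc} --- it is essentially the paper's argument. The only divergence is in disposing of the finite (abelian) simple factors: the paper invokes Lemma~\ref{L:finite_extn new}\ref{I:finite_extn new:2} to reduce at once to the case where every factor is infinite, whereas you handle the finite quotient $Q=\prod_{i\in A}G_i$ directly by lifting $\pi_Q(g)$ into $B_S^G(|Q|-1)$; your variant is equally valid and yields the slightly cleaner explicit bound $\Delta(G)\le |Q|-1+2\sum_{i\in N}\Delta(G_i)$.
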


\begin{proof}
Identify the groups $G_i$ with subgroups of $G$ as the standard factors.
If $S_i \in \Gamma_{k_i}(G_i)$ it is clear that $S:=\bigcup_{i=1}^n S_i \in \Gamma_{k_1+\dots+k_n}(G)$ 
and that $\| G\|_S = \sum_{i=1}^n \|G_i\|_{S_i}$ which implies \ref{I:product delta geq sum delta}.
Corollary \ref{C:tfae bounded} implies \ref{I:fcg bounded for product}.

Assume that the groups $G_i$ are simple and $\Delta(G_i)<\infty$.
From Lemma \ref{L:finite_extn new}\ref{I:finite_extn new:2} we may assume that every $G_i$ is infinite, hence simple non-abelian.
Let $\pi_i \colon G \to G_i$ denote the projections. 
Let $S \in \Gamma(G)$.
For any $1 \leq i \leq n$ there must exist $s_i \in S$ such that $\pi_i(s_i) \neq 1$.
Since $G_i$ is not abelian, there exists $y_i \in G_i$ such that the commutator $x_i=[\pi_i(s_i),y_i]$ is not trivial, hence normally generates $G_i$. 
As an element of $G$, $x_i$ is a commutator $[s_i,y_i]$ so $\|x_i\|_S \leq 2$, and Lemma \ref{L:BXn properties}\ref{L:BXn:assoc} implies that
\[
\|G\|_S \leq 2 \|G\|_{\{x_1,\dots,x_n\}} \leq 2 \sum_i \|G_i\|_{x_i} \leq 2 \sum_i \Delta(G_i).
\]
Part \ref{I:u bounded for product of simple} follows.
\end{proof}

\subsection*{Geometric consequences}

\begin{lemma}[Nonsqueezing]\label{L:non-squeezing} 
Let $G$ be a finitely normally generated group such that $\Delta_n(G)<\infty$ for some $n \geq 1$.
Then for any conjugation-invariant norm $\nu$ on $G$,
\[
\Delta_n(G) \cdot \inf_{S\in \Gamma_n(G)} \left(\max_{s \in S}\, \nu(s)\right)
\geq 
\OP{diam}(\nu).
\] 
\end{lemma}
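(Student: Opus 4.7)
The plan is to reduce the statement directly to Proposition~\ref{P:lipschitz-max} applied to the identity map. Assume $\Gamma_n(G) \neq \emptyset$ (otherwise the right-hand side of the infimum is $+\infty$ by convention and there is nothing to check). Fix an arbitrary $S \in \Gamma_n(G)$, and set $C_S = \max_{s \in S} \nu(s)$, which is a finite real number since $S$ is finite and $\nu$ takes values in $\B R$.

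Next, I would apply Proposition~\ref{P:lipschitz-max} to the identity homomorphism $\OP{id} \colon G \to G$ equipped with the target norm $\nu$. This yields, for every $g \in G$, the Lipschitz bound
\[
\nu(g) \leq C_S \cdot \| g \|_S.
\]
Taking the supremum over $g \in G$ on both sides and using that $\| G \|_S \leq \Delta_n(G)$ by definition of $\Delta_n(G)$ (since $S \in \Gamma_n(G)$), I obtain
\[
\OP{diam}(\nu) \leq C_S \cdot \| G \|_S \leq C_S \cdot \Delta_n(G).
\]

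Finally, since $S \in \Gamma_n(G)$ was arbitrary, I take the infimum over $S \in \Gamma_n(G)$ of the right-hand side. Note that $\Delta_n(G)$ does not depend on $S$ and is non-negative (being a supremum of norm diameters), so it passes through the infimum cleanly, yielding
\[
\OP{diam}(\nu) \leq \Delta_n(G) \cdot \inf_{S \in \Gamma_n(G)} \Bigl( \max_{s \in S} \nu(s) \Bigr),
\]
which is the desired inequality. There is no real obstacle here: the whole argument is a one-line consequence of the Lipschitz-max proposition combined with the definition of $\Delta_n(G)$. The only mild subtlety is the degenerate case $\Gamma_n(G) = \emptyset$, which one disposes of by the standard $\sup \emptyset = -\infty$, $\inf \emptyset = +\infty$ conventions.
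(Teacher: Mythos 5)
Your proof is correct and is essentially identical to the paper's: both apply Proposition~\ref{P:lipschitz-max} to the identity map on $G$, bound $\|G\|_S$ by $\Delta_n(G)$, and take the infimum over $S \in \Gamma_n(G)$. Your explicit handling of the degenerate case $\Gamma_n(G) = \emptyset$ is a minor addition the paper leaves implicit.
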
 

\begin{proof} 
Choose some $S\in \Gamma_n(S)$.  
Proposition \ref{P:lipschitz-max} applied to the identity function on $G$ shows that 
\[
\diam \, \nu \leq \max\,\{ \nu(s) :s \in S\} \cdot \|G\|_S \leq \max\,\{ \nu(s) :s \in S\} \cdot \Delta_n(G).
\]
The result follow by taking infimum over all $S \in \Gamma_n(G)$.
\end{proof}

If $\nu$ is a norm on $G$, let $B_\nu^G(\epsilon)$ (or simply $B_\nu(\epsilon)$) denote the open $\nu$-ball of radius $\epsilon$ centred at $1 \in G$.

\begin{corollary}\label{C:uniformly-simple}
Let $G$ be a (non-trivial) uniformly bounded simple group. 
Then every conjugation-invariant norm $\nu$ on $G$ induces the discrete topology.
\end{corollary}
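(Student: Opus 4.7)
The plan is to deduce the statement directly from the Nonsqueezing Lemma (Lemma \ref{L:non-squeezing}) applied with $n=1$, using simplicity to identify $\Gamma_1(G)$.

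First, since $G$ is simple and non-trivial, every element $1 \neq g \in G$ normally generates $G$, so $\Gamma_1(G) = \{\{g\} : 1 \neq g \in G\}$. By Example~\ref{E:deltas of simple groups}, together with the assumption that $G$ is uniformly bounded, we have $\Delta_1(G) = \Delta(G) < \infty$. Write $D = \Delta_1(G)$.

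Now let $\nu$ be any conjugation-invariant norm on $G$. Lemma \ref{L:non-squeezing} applied with $n=1$ yields
\[
D \cdot \inf_{1 \neq g \in G} \nu(g) \;=\; \Delta_1(G) \cdot \inf_{S \in \Gamma_1(G)} \Bigl(\max_{s \in S} \nu(s)\Bigr) \;\geq\; \OP{diam}(\nu).
\]
Suppose for contradiction that $\nu$ does not induce the discrete topology, i.e., $\inf_{1 \neq g \in G} \nu(g) = 0$. Then the displayed inequality forces $\OP{diam}(\nu) = 0$, hence $\nu(g) = 0$ for every $g \in G$. By norm axiom (a) this means $G = \{1\}$, contradicting the hypothesis that $G$ is non-trivial. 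Therefore $\inf_{1 \neq g \in G} \nu(g) > 0$, so $\nu$ is discrete and induces the discrete topology.

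There is no real obstacle here: the content has been packaged into the Nonsqueezing Lemma together with the simplicity observation from Example~\ref{E:deltas of simple groups}, and the corollary is a two-line consequence. The only conceptual point to keep straight is that in a simple group, finite normal generation in one element is automatic for every non-identity element, which is precisely what converts the infimum in the Nonsqueezing Lemma into an infimum over all $1 \neq g \in G$.
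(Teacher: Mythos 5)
Your proof is correct and follows essentially the same route as the paper: both combine Example~\ref{E:deltas of simple groups} (so that $\Delta_1(G)=\Delta(G)<\infty$ and $\Gamma_1(G)$ consists of the nonidentity singletons) with the Nonsqueezing Lemma~\ref{L:non-squeezing} for $n=1$. The paper states the conclusion directly as $\inf_{1\neq g\in G}\nu(g)\geq \OP{diam}(\nu)/\Delta(G)>0$, whereas you phrase it as a contradiction, but the content is identical.
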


\begin{proof}
It follows from Example \ref{E:deltas of simple groups} and Lemma \ref{L:non-squeezing} that
\[
\inf_{1 \neq g \in G} \, \nu(g) 
=
\inf_{S \in \Gamma_1(G)} \, \left(\max_{g \in S} \nu(g)\right)
\geq 
\frac{\OP{diam}\, \nu}{\Delta_1(G)} 
=
\frac{\OP{diam}\, \nu}{\Delta(G)} 
>0.
\]
Therefore $\nu$ is discrete.
\end{proof}

\begin{corollary}\label{C:uniformly simple quotients}
Let $G$ be a group with a composition series $1=N_0 \lhd N_1 \lhd \cdots \lhd N_k=G$ such that the groups $N_i/N_{i-1}$ are uniformly bounded and simple.
\begin{enumerate}[label=(\roman*)]
\item
\label{I:uniformly simple quotients:discrete}
Any conjugation-invariant norm on $G$ induces the discrete topology.
\item
\label{I:uniformly simple quotients:non-embedding} 
If $G$ is in addition uncountable then it cannot be isomorphic to any subgroup of a group $H$ that can be equipped with a conjugation-invariant norm $\nu$  making it a separable metric space.
\end{enumerate}
\end{corollary}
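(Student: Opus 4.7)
My plan is to prove \ref{I:uniformly simple quotients:discrete} by induction on the length $k$ of the composition series, and then derive \ref{I:uniformly simple quotients:non-embedding} as an immediate consequence by a separability argument.

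For the base case $k=1$, the group $G = N_1/N_0$ is itself a uniformly bounded simple group, so Corollary \ref{C:uniformly-simple} directly gives the result. For the inductive step, let $\nu$ be a conjugation-invariant norm on $G$ and consider the restriction $\nu|_{N_{k-1}}$. Since $N_{k-1}$ carries the composition series $1=N_0 \lhd \cdots \lhd N_{k-1}$ of length $k-1$ with the same uniformly bounded simple factors, and since the restriction of a $G$-conjugation-invariant norm is in particular $N_{k-1}$-conjugation-invariant, the induction hypothesis implies that $\nu|_{N_{k-1}}$ is discrete. Hence there is $\varepsilon_1>0$ with $\nu(n)\geq \varepsilon_1$ for every $1\neq n\in N_{k-1}$.

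Now I apply Lemma \ref{L:quotient of conj inv norms} to the quotient $\pi\colon G\to G/N_{k-1}$ to obtain a conjugation-invariant norm $\nu'$ on $G/N_{k-1}$ given by $\nu'(\overline g)=\inf\{\nu(gn):n\in N_{k-1}\}$. Since $G/N_{k-1}$ is uniformly bounded and simple, Corollary \ref{C:uniformly-simple} gives $\varepsilon_2>0$ such that $\nu'(\overline g)\geq \varepsilon_2$ for every $\overline g\neq 1$. For any $g\in G$ with $g\notin N_{k-1}$ we then have $\nu(g)\geq \nu'(\pi(g))\geq \varepsilon_2$. Combined with the bound on $N_{k-1}$, this yields $\nu(g)\geq \min(\varepsilon_1,\varepsilon_2)>0$ for every $1\neq g\in G$, so $\nu$ is discrete, completing the induction.

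For \ref{I:uniformly simple quotients:non-embedding}, suppose for contradiction that $G$ is (abstractly) a subgroup of a group $H$ carrying a conjugation-invariant norm $\nu$ under which $H$ is a separable metric space. The restriction $\nu|_G$ is a conjugation-invariant norm on $G$, which by \ref{I:uniformly simple quotients:discrete} induces the discrete topology on $G$. But any subspace of a separable metric space is itself separable, so $G$ is a separable discrete space and hence countable, contradicting the uncountability assumption. The only delicate point in the whole argument is keeping track of the induction hypothesis for the subgroup $N_{k-1}$ and verifying that Lemma \ref{L:quotient of conj inv norms} applies; everything else is a short combination of the lemmas already proved.
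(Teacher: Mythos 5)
Your proof is correct and follows essentially the same route as the paper: induction along the composition series, using the induction hypothesis to see that $\nu|_{N_{k-1}}$ is discrete, then Lemma \ref{L:quotient of conj inv norms} to push $\nu$ down to $G/N_{k-1}$ and Corollary \ref{C:uniformly-simple} to finish, with part \ref{I:uniformly simple quotients:non-embedding} handled by the same separability argument. The only (cosmetic) difference is that you argue directly with explicit lower bounds $\varepsilon_1,\varepsilon_2$ where the paper phrases the induction step as a proof by contradiction.
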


\begin{proof}
\ref{I:uniformly simple quotients:discrete}
We use induction on $k$.
The base case $k=0$ is trivial and we prove the induction to $k+1$.
Assume false: so suppose $\nu$ is a non-discrete conjugation-invariant norm on $G$.
By the induction hypothesis $\nu|_{N_k}$ is discrete so  $N_k \cap B_\nu^G(\epsilon)=\{1\}$ for some $\epsilon>0$.
Lemma \ref{L:quotient of conj inv norms} shows that $G/N_k$ is equipped with a conjugation-invariant norm $\nu'$; the definition of $\nu'$ and the triangle inequality imply that $B^G_\nu(\epsilon/2)$ maps isometrically onto $B_{\nu'}^{G/N_k}(\epsilon/2)$.
In particular $\nu'$ is not discrete, contradicting Corollary \ref{C:uniformly-simple}.

\ref{I:uniformly simple quotients:non-embedding}
Suppose $G$ is a subgroup of $H$.
It follows from \ref{I:uniformly simple quotients:discrete} that  $\nu|_G$ is discrete.
This is impossible since an uncountable subset of a separable metric space cannot be discrete.
\end{proof}

\begin{proposition}\label{P:dense subgroups}
Let $N$ be a {\em proper} normal subgroup of $G$ and suppose that any $g \notin N$ normally generates $G$.
Then 
\begin{enumerate}[label=(\roman*)]
\item
\label{I:dense subgroups:Delta_1=Delta}
$\Delta_1(G)=\Delta(G).$
\end{enumerate}
Suppose that, in addition, $G$ is equipped with a
conjugation-invariant norm $\nu$ such that $N$ is closed but not open.
Then
\begin{enumerate}[label=(\roman*)]
\setcounter{enumi}{1}
\item
\label{I:dense subgroups:Delta_1=infty}
$\Delta_1(G)=\infty$.
\item
\label{I:dense subgroups:dense not strongly bounded}
Let $H$ be a finitely normally generated group and $H \to G$ a homomorphism with dense image.
Then $H$ is not strongly bounded.
\end{enumerate}
\end{proposition}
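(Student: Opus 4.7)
The plan is to argue by contradiction. If $H$ is strongly bounded, then by Lemma \ref{L:finite gen quotient not strongly bounded}\ref{I:finite gen quotient not strongly bounded:2} so is the quotient $\phi(H)=H/\ker\phi$. My strategy is to show $\Delta_l(\phi(H))=\infty$, for $l$ a fixed (minimum) normal-generation number of $\phi(H)$, by constructing, for every $\epsilon>0$, a set $S_\epsilon\in\Gamma_l(\phi(H))$ with $\max_{s\in S_\epsilon}\nu(s)<\epsilon$.  Applying Proposition \ref{P:lipschitz-max} to the inclusion $\phi(H)\hookrightarrow G$ with the norm $\nu$, and using density of $\phi(H)$ together with continuity of $\nu$, then yields
\[
\diam(\nu)=\sup_{x\in\phi(H)}\nu(x)\leq\|\phi(H)\|_{S_\epsilon}\cdot\epsilon\leq\Delta_l(\phi(H))\cdot\epsilon,
\]
forcing $\Delta_l(\phi(H))=\infty$ since $\diam(\nu)>0$ (as $G$ is nontrivial), contradicting the strong boundedness of $\phi(H)$.

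To construct $S_\epsilon$, I start with any $\{g_1,\ldots,g_l\}\in\Gamma_l(\phi(H))$.  Because $\phi(H)$ is dense in $G$ and $N\subsetneq G$ is closed, at least one $g_i$ lies outside $N$; substituting a generator by its product with $g_i$ (which keeps the set normal-generating) lets me assume $g_1,g_2\notin N$.  I then modify the generators one at a time: at step $j\in\{1,\ldots,l\}$, I replace the current $g_j$ by $g_j':=g_ju_j$, where $u_j$ is chosen inside the normal closure
\[
M_j:=\llangle g_1',\ldots,g_{j-1}',g_{j+1},\ldots,g_l\rrangle_{\phi(H)}
\]
so that $\nu(g_ju_j)<\epsilon$ and $g_ju_j\notin N$.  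The new set still normally generates $\phi(H)$, since $g_j=g_j'u_j^{-1}$ with $u_j\in M_j$ sits in its normal closure.

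The existence of $u_j$ rests on showing $M_j$ is dense in $G$.  The hypothesis that every $g\in G\setminus N$ normally generates $G$ means the only proper normal subgroups of $G$ lie in $N$; on the other hand $\overline{M_j}$ is a closed normal subgroup of $G$, because $M_j$ is $\phi(H)$-invariant, and density of $\phi(H)$ plus continuity of conjugation extend that invariance to all of $G$.  The key invariant, maintained through the iteration, is that $g_2$ (at step $1$) or $g_1'$ (at steps $j\geq 2$) lies in $M_j\setminus N$, forcing $\overline{M_j}=G$.  Density of the coset $g_jM_j$ then lets me pick $v_j=g_ju_j$ inside the nonempty open set $B_\nu(\epsilon)\cap(G\setminus N)$ (nonempty because $N$ is closed but not open), yielding the desired $u_j\in M_j$.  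The main obstacle is exactly this bookkeeping: making sure the invariant $g_1'\notin N$ is preserved at every step, so that the density argument for $M_j$ continues to apply throughout the iteration.
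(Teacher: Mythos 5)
Your argument rests on the same mechanism as the paper's: the normal closure in $\phi(H)$ of any element of $\phi(H)\setminus N$ has closure that is normalised by the dense subgroup $\phi(H)$, hence normal in $G$, hence (not being contained in $N$) equal to $G$; density of the resulting cosets, together with the fact that $B_\nu(\epsilon)\setminus N$ is a nonempty open set, then yields normally generating sets of arbitrarily small norm, and nonsqueezing gives the contradiction. The only real difference is bookkeeping. The paper first picks a single auxiliary element $k\in \phi(H)\setminus N$ with $\nu(k)<\epsilon$, observes that its normal closure $K$ is dense, and shrinks all $m$ original generators at once via the cosets $h_iK$, outputting $\{k,h_1',\dots,h_m'\}$ of size $m+1$; you keep the set size fixed at $l$ and shrink the generators one at a time using the normal closure of the remaining ones, which obliges you to carry the invariant that some other generator stays outside $N$ at every step. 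Your version is correct where it applies and gives the marginally sharper conclusion $\Delta_l(\phi(H))=\infty$, but neither the minimality of $l$ nor the sharper conclusion is needed.

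The one genuine defect is the case $l=1$: if $\phi(H)$ is normally generated by a single element there is no $g_2$, the subgroup $M_1$ is the normal closure of the empty set, i.e.\ trivial, and the iteration cannot begin. This is not vacuous (in the intended application $\phi(H)$ may well be normally generated by one element), but it is trivially repaired: strong boundedness asserts $\Delta_n(\phi(H))<\infty$ for \emph{every} $n$, so you may pad the generating set to size $2$ (any finite superset of a normally generating set still lies in $\Gamma_2$) and run your argument there, or simply adopt the paper's device of adjoining the auxiliary small element $k$, which is exactly what lets the paper avoid this case distinction. With that repair the proof is complete.
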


\begin{proof}
\ref{I:dense subgroups:Delta_1=Delta}
Any $S \in \Gamma(G)$ must contain some $g \notin N$ so $\| G\|_S \leq \| G\|_g \leq \Delta_1(G)$.
Since $S$ was arbitrary, $\Delta(G) \leq \Delta_1(G)$ and equality must hold.

\ref{I:dense subgroups:dense not strongly bounded}
First, by Lemma \ref{L:finite gen quotient not strongly bounded}\ref{I:finite gen quotient not strongly bounded:2} we may replace $H$ with its image in $G$ and hence assume that $H \leq G$.
Assume false, i.e., $H$ is strongly bounded.
Fix $S=\{h_1,\dots,h_m\} \in \Gamma(H)$.
Let $\epsilon >0$.
Since $H$ is dense and $N$ is closed and not open, $H \cap B_\nu^G(\epsilon)$ cannot be contained in $N$, whence we choose $k \in H \setminus N$ with $\nu(k)<\epsilon$.
Let $K \leq H$ be the normal subgroup $k$ generates in $H$.
Since $K \triangleleft H$, the closure $\overline{K}$ is normalised by $H$.
Since $H$ is dense and $\overline{K}$ is closed, $\overline{K} \trianglelefteq G$ and therefore $\overline{K}=G$ (since $k \notin N$).
In other words, $K$ is dense in $G$, and therefore so are the cosets $h_1K,\dots,h_mK$, hence we can choose $h_i' \in h_iK \subseteq H$ such that $\nu(h_i')<\epsilon$.
It is clear that
\[
X_\epsilon =\{ k,h_1',\dots,h_m'\}
\]
normally generates $H$ and that $X_\epsilon \subseteq B_\nu^G(\epsilon)$.
Since $\epsilon>0$ was arbitrary and $\Delta_{m+1}(H)<\infty$, the left-hand side of the inequality in the nonsqueezing Lemma \ref{L:non-squeezing}  vanishes, which is absurd since $\diam(\nu)>0$.

\ref{I:dense subgroups:Delta_1=infty}
First, $G$ is finitely normally generated.
Apply part \ref{I:dense subgroups:dense not strongly bounded} to $H=G$ to deduce that $G$ is not strongly bounded, hence not uniformly bounded, i.e., $\Delta(G)=\infty$.
The result follows from \ref{I:dense subgroups:Delta_1=Delta}.
\end{proof}

%
%
%
%
%

\section{Lie groups}
\label{S:Lie groups}

Throughout this section, unless otherwise stated,  all Lie algebras are defined over the real numbers.  
A connected Lie group $G$ is called {\em simple} if its Lie algebra $\F g$ is \emph{simple}, i.e., it is not abelian and has no non-trivial ideals.
It is called {\em semisimple} if $\F g$ is \emph{semisimple}, i.e., is a direct sum of simple Lie algebras. The purpose of this section is to prove the following result.

\begin{theorem}\label{T:Lie groups boundedness properties}
Let $G$ be a semisimple Lie group.  
Then  $G$ is finitely normally generated, and
\begin{enumerate}[label=(\alph*)]
\item $G$ is bounded if and only if  $Z(G)$ is finite. 
\label{i:ZG infinite => unbounded}
\end{enumerate}
If $Z(G)$ is finite then the following hold. 
\begin{enumerate}[label=(\alph*)]
\setcounter{enumi}{1}
\item 
\label{i:G compact => not strongly bounded}
If $G/Z(G)$ has a non-trivial compact factor then $G$ is bounded but not strongly bounded. 

\item 
\label{i:G non compact => uniformly bounded}
If $G/Z(G)$ has no non-trivial compact factors then $G$ is uniformly bounded. 
\end{enumerate}
\end{theorem}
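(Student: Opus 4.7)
The plan is to combine the structure theory of semisimple Lie groups with the boundedness results for simple centerless factors (Propositions \ref{P:simple compact Z(G)=1} and \ref{P:simple non-compact Z(G)=1}) and the hereditary properties in Lemmas \ref{L:finite gen quotient not strongly bounded}, \ref{L:finite_extn new} and \ref{L:product of simple uniformly bounded}. For finite normal generation, I would write $G/Z(G)=H_1\times\cdots\times H_r$ as a product of simple centerless Lie groups, pick one non-identity element in each $H_i$ (which normally generates $H_i$ by Lemma \ref{L:bg2n contains nbhd}), and augment lifts of these to $G$ by a finite generating set for the finitely generated abelian discrete group $Z(G)$.

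For the $(\Leftarrow)$ direction of \ref{i:ZG infinite => unbounded}, if $|Z(G)|<\infty$ then each factor $H_i$ of the decomposition above is bounded by Proposition \ref{P:simple compact Z(G)=1} in the compact case or \ref{P:simple non-compact Z(G)=1} in the non-compact case; Lemma \ref{L:product of simple uniformly bounded}\ref{I:fcg bounded for product} makes the product bounded, and Lemma \ref{L:finite_extn new}\ref{I:finite_extn new:1} lifts boundedness through the finite central extension $G\to G/Z(G)$ to $G$.

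The reverse direction of \ref{i:ZG infinite => unbounded} is the main obstacle, and I would argue the contrapositive by producing an unbounded conjugation-invariant norm when $Z(G)$ is infinite. Infiniteness of $Z(G)$ forces an infinite cyclic central subgroup $\langle z\rangle\le Z(G)$ and, via the product decomposition of the universal cover $\widetilde{G}$, forces some simple factor $\widetilde{G}_i$ to have infinite fundamental group, necessarily of Hermitian type (for example $\widetilde{\mathrm{PSL}(2,\B R)}$ or $\widetilde{\mathrm{Sp}(2n,\B R)}$). On such a factor the rotation-number / Euler-class construction yields a non-trivial homogeneous quasi-morphism $\varphi$ with $\varphi(z)\ne 0$, and because the defect of $\varphi$ is finite the pulled-back stable commutator length $\mathrm{scl}_G$ is a conjugation-invariant seminorm on $G$ which grows linearly along the sequence $(z^n)_{n\ge 1}$; perturbing $\mathrm{scl}_G$ by a discrete conjugation-invariant term (for instance adding $1$ on every non-identity element) upgrades it to a genuine conjugation-invariant norm of infinite diameter, contradicting boundedness of $G$.

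For \ref{i:G compact => not strongly bounded}, boundedness follows from \ref{i:ZG infinite => unbounded}. To rule out strong boundedness, let $K$ be a compact simple centerless factor in the decomposition of $G/Z(G)$ and consider the composed surjection $\pi\colon G\twoheadrightarrow G/Z(G)\twoheadrightarrow K$. Every non-identity element of $K$ normally generates $K$, and a bi-invariant Riemannian metric on the positive-dimensional connected manifold $K$ induces a conjugation-invariant norm for which $\{1\}$ is closed but not open. Applying Proposition \ref{P:dense subgroups}\ref{I:dense subgroups:Delta_1=infty} with $N=\{1\}$ yields $\Delta_1(K)=\infty$, so $K$ is not strongly bounded, and Lemma \ref{L:finite gen quotient not strongly bounded}\ref{I:finite gen quotient not strongly bounded:2} transfers this failure through $\pi$ to $G$. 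Finally, for \ref{i:G non compact => uniformly bounded}, each $H_i$ is now a non-compact simple centerless Lie group and hence uniformly bounded by Proposition \ref{P:simple non-compact Z(G)=1}; Lemma \ref{L:product of simple uniformly bounded}\ref{I:u bounded for product of simple} yields uniform boundedness of $G/Z(G)$, and since $|Z(G)|<\infty$, Lemma \ref{L:finite_extn new}\ref{I:finite_extn new:2} lifts it to $G$.
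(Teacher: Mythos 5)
Most of your argument coincides with the paper's proof: the decomposition of $G/Z(G)$ into centerless simple factors for finite normal generation, the ``if'' direction of \ref{i:ZG infinite => unbounded} via Propositions \ref{P:simple compact Z(G)=1} and \ref{P:simple non-compact Z(G)=1} together with Lemmas \ref{L:product of simple uniformly bounded}\ref{I:fcg bounded for product} and \ref{L:finite_extn new}\ref{I:finite_extn new:1}, and parts \ref{i:G compact => not strongly bounded} and \ref{i:G non compact => uniformly bounded} exactly as you describe (for \ref{i:G non compact => uniformly bounded} you should note explicitly that Lemma \ref{L:product of simple uniformly bounded}\ref{I:u bounded for product of simple} requires the factors to be simple as abstract groups, which is supplied by Lemma \ref{L:bg2n contains nbhd}).

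The genuine gap is in the ``only if'' direction of \ref{i:ZG infinite => unbounded}. Your quasimorphism $\varphi$ is produced by the rotation-number construction on (the universal cover of) a single Hermitian factor, but the argument needs a nonzero homogeneous quasimorphism on $G$ itself: $\OP{scl}$ and commutator length are computed in $G$, and the Bavard-type lower bound $\OP{scl}(z^n)\gtrsim n$ requires a quasimorphism defined on $G$ with $\varphi(z)\neq 0$. Writing $G=\widetilde{G}/D$ with $D\le Z(\widetilde{G})$, a homogeneous quasimorphism on $\widetilde{G}$ descends to $G$ only if it vanishes on the central subgroup $D$, and the Guichardet--Wigner quasimorphism of one factor typically does not. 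For example, take $G$ to be the product of two copies of the universal cover of $\OP{SL}(2,\B R)$ modulo the diagonal copy of $\B Z$ in the center: $Z(G)$ is infinite cyclic, neither factor's quasimorphism descends, and only a suitable difference $a_1\varphi_1-a_2\varphi_2$ does --- after which one must still check it is nonzero on $Z(G)$. In general one needs a linear combination over all Hermitian factors that annihilates $D$ but not the infinite part of $Z(\widetilde{G})/D$; such a combination exists because $D$ has rank strictly smaller than the number of Hermitian factors, but this has to be argued. This descent problem is precisely what the paper's Proposition \ref{P:infinite cyclic centre} solves: after splitting $Z(G)\cong\B Z\times Z(G)'$ and replacing $G$ by $K=G/Z(G)'$ so that $Z(K)\cong\B Z$, it produces the required unbounded quasimorphism on $K$ by a bounded-cohomology argument rather than by naming an explicit cocycle, and then concludes using the cited fact that a bounded finitely normally generated group admits no unbounded quasimorphism. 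Your direct construction of a conjugation-invariant norm from $\OP{scl}$ (or simply from commutator length, since $G$ is perfect) is an acceptable substitute for that last citation, but the existence of the quasimorphism on $G$ must be established first.
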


\begin{remark}
Dowerk and Thom proved that topologically simple compact groups are bounded
\cite[Proposition 2.2]{MR3907832}.
\end{remark}

Any connected Lie group $G$ acts on itself by conjugation and this gives rise
to the adjoint representation $\Ad \colon G \to \GL(\F g)$ whose kernel is $Z(G)$.
We will need the following standard fact.

\begin{lemma}\label{P:G simple => Ad irreducible}
Let $G$ be a connected Lie group with Lie algebra $\F g$.  
If $V \subseteq \F g$ is an $\OP{Ad}(G)$-invariant subspace then $V$ is an ideal in $\F g$.
\qed
\end{lemma}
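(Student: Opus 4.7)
The plan is to reduce the $\Ad(G)$-invariance of $V$ to an infinitesimal condition by differentiating $\Ad$ at the identity. Recall that $\Ad \colon G \to \GL(\F g)$ is a Lie group homomorphism, and its derivative at $e \in G$ is the adjoint representation $\OP{ad} \colon \F g \to \OP{End}(\F g)$ of the Lie algebra, characterized by $\OP{ad}(X)(Y) = [X,Y]$. These two representations are tied together by the standard compatibility
\[
\Ad(\exp(tX)) \;=\; \exp\bigl(t\,\OP{ad}(X)\bigr) \qquad (X \in \F g,\ t \in \B R).
\]

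First I would apply the hypothesis to the one-parameter subgroup $\exp(tX)$: for every $X \in \F g$ and every $v \in V$, the curve
\[
t \;\longmapsto\; \Ad(\exp(tX))(v) \;=\; \exp\bigl(t\,\OP{ad}(X)\bigr)(v)
\]
lies entirely in $V$. Using that $V$ is a finite-dimensional (hence closed) linear subspace of $\F g$, I would then differentiate this curve at $t = 0$ to conclude $\OP{ad}(X)(v) \in V$, i.e.\ $[X, v] \in V$. Since $X \in \F g$ and $v \in V$ are arbitrary, this is precisely the statement that $V$ is an ideal in $\F g$.

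No substantive obstacle is anticipated: the whole argument is the standard passage from a subspace invariant under a Lie group representation to one invariant under the differentiated Lie algebra representation. Note that the connectedness hypothesis on $G$ is not strictly needed for this direction, since $\exp(\F g) \subseteq G$ regardless, and the infinitesimal condition it produces already suffices to identify $V$ as an ideal.
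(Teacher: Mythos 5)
Your proof is correct and is exactly the standard argument the paper has in mind: the lemma is stated in the paper as a ``standard fact'' with the proof omitted, and the intended justification is precisely to differentiate $t \mapsto \Ad(\exp(tX))(v)$ at $t=0$, using that the finite-dimensional subspace $V$ is closed, to obtain $[X,v]=\OP{ad}(X)(v)\in V$. Your remark that connectedness is not needed for this implication (only for the converse) is also accurate.
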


%

\begin{lemma}\label{L:bg2n contains nbhd}
Let $G$ be a simple Lie group of dimension $n$.
If $g \in G$ is not in $Z(G)$ then $B_g(2n)$ contains an open neighbourhood $U$ of $1 \in G$ such that $U=U^{-1}$.
In particular, $g$ normally generates $G$.  
\end{lemma}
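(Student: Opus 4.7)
The plan is to exploit that $g \notin Z(G) = \ker \Ad$ to construct a smooth map into $B_g(2n)$ whose differential at the identity is surjective; the submersion theorem then produces the required neighborhood.

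First I introduce, for each $g' \in G$, the subspace $V_{g'} \defeq (\Ad(g') - I)\F g \subseteq \F g$, which is nonzero for $g' = g$ since $\Ad(g) \neq I$. A direct computation gives $\Ad(h) V_{g'} = V_{hg'h^{-1}}$, so the sum $W \defeq \sum_{h \in G} V_{hgh^{-1}}$ is a nonzero $\Ad(G)$-invariant subspace of $\F g$. Lemma~\ref{P:G simple => Ad irreducible}, together with simplicity of $\F g$, forces $W = \F g$, and by a dimension count I can choose elements $g_1, \ldots, g_k$, each conjugate to $g$ in $G$, with $k \leq n$, such that $V_{g_1} + \cdots + V_{g_k} = \F g$.

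Next, I define $\Phi \colon G^k \to G$ by $\Phi(h_1, \ldots, h_k) = [g_1, h_1] \cdots [g_k, h_k]$. Writing $[g_i, h_i] = g_i \cdot (h_i g_i h_i^{-1})^{-1}$ exhibits each commutator as the product of a conjugate of $g$ and a conjugate of $g^{-1}$, so $[g_i, h_i] \in B_g(2)$, and Lemma~\ref{L:BXn properties}\ref{L:BXn:mult} gives $\Phi(G^k) \subseteq B_g(2k) \subseteq B_g(2n)$. Using $\frac{d}{dt}\big|_{t=0}[g_i, \exp(tX_i)] = (\Ad(g_i) - I)X_i$ and the fact that $\Phi(1, \ldots, 1) = 1$ (so the total differential at the basepoint is the sum of the partial differentials), one computes
\[
d\Phi|_{(1,\ldots,1)}(X_1, \ldots, X_k) = \sum_{i=1}^k (\Ad(g_i) - I)\, X_i,
\]
whose image is $V_{g_1} + \cdots + V_{g_k} = \F g$. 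Thus $\Phi$ is a submersion at $(1,\ldots,1)$, so its image contains an open neighborhood $U_0$ of $1 \in G$. Setting $U \defeq U_0 \cap U_0^{-1}$ produces an open symmetric neighborhood of $1$ contained in $B_g(2n)$, using that $B_g(2n) = B_g(2n)^{-1}$ by Lemma~\ref{L:BXn properties}\ref{L:BXn:inv}.

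For the final assertion, the normal subgroup $\lAngle g \rAngle$ contains $B_g(2n) \supseteq U$, hence is an open subgroup of the connected Lie group $G$, and so must equal $G$. The main point to verify carefully is the differential calculation, and more precisely that a nonzero $\Ad(G)$-invariant subspace must exhaust $\F g$; this is where the hypothesis $g \notin Z(G)$ is used, via the non-vanishing of $V_g$, and where simplicity of $\F g$ is essential.
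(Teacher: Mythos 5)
Your proof is correct and follows essentially the same route as the paper: a commutator map whose differential at the identity is built from operators $\Ad(\cdot)-I$, with simplicity of $\F g$ (via the $\Ad(G)$-invariance lemma) forcing surjectivity, then the submersion theorem and symmetrization. The only cosmetic difference is that the paper fixes one $Y$ with $\Ad(g)Y\neq Y$ and spans $\F g$ by the $\Ad(G)$-orbit of $X=\Ad(g)Y-Y$, parametrizing by $\B R^n$, whereas you span it by the subspaces $V_{g_i}=(\Ad(g_i)-I)\F g$ and parametrize by $G^k$; both yield the same bound of $2n$ conjugates.
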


\begin{proof} 
Since $g \notin Z(G)$ and $G$ is connected, $\OP{Ad}(g) \in \OP{GL}(\F g)$ is not the identity transformation and therefore $\OP{Ad}(g)(Y)\neq Y$ for some $Y\in \F g$.  
Set $X=\OP{Ad}(g)(Y)-Y$.
Then $X \neq 0$ and the simplicity of $\F g$ and Lemma \ref{P:G simple => Ad irreducible} imply that $\OP{Ad}(G)(X)$ spans $\F g$.
Therefore there exist $g_1,\dots,g_n \in G$ such that \[ \OP{Ad}(g_1)(X),
\dots, \OP{Ad}(g_n)(X)
\]
form a basis of $\F g$.
Consider the smooth function $\Psi\colon \B R^n \to G$ given by
\[
\Psi \colon (t_1,\ldots,t_n)\mapsto [g,\exp(t_1Y)]^{g_1} \cdots [g,\exp(t_n Y)]^{g_n}.
\]
The differential of $\Psi$ at the origin satisfies
\[
d\Psi(\partial_i)=\OP{Ad}(g_i)(X).
\]
It follows that $\Psi$ is nondegenerate at $0 \in \B R^n$, hence its image
contains an open neighbourhood $U_g$ of the identity.  Since
$\Psi(t_1,\ldots,t_n)$ is a product of $2n$ conjugates of $g$, the image of
$\Psi$ is contained in $B_g(2n)$. 
Set $U=U_g \cap U_g^{-1}$.
\end{proof}

\begin{proof}[Proof of Theorem~\ref{thm:dense_Lie}]
 Any $g\not\in Z(G)$ normally generates $G$ by Lemma \ref{L:bg2n contains nbhd}.  Now $G$ can be equipped with a bi-invariant Riemannian metric \cite[Theorem~6.2]{MR3136522}, which gives rise to a conjugation-invariant norm (see Section~\ref{S:preliminaries}); this induces the usual topology on $G$.  Since $Z(G)$ is closed but not open, the result follows from Proposition~\ref{P:dense subgroups}\ref{I:dense subgroups:dense not strongly bounded}.
\end{proof}

The first step in proving Theorem \ref{T:Lie groups boundedness properties} is to investigate simple Lie groups with trivial center.
The compact case is straightforward.

\begin{proposition}\label{P:simple compact Z(G)=1}
Let $G$ be a simple compact Lie group with $Z(G)=1$.
Then $G$ is bounded and $\Delta_1(G)=\infty$.
\end{proposition}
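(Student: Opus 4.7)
The plan is to handle the two claims separately, with both arguments leaning on Lemma~\ref{L:bg2n contains nbhd} together with compactness and the existence of a bi-invariant Riemannian metric on a compact Lie group.

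For boundedness, I would fix any $g \neq 1$ in $G$. Since $Z(G) = 1$, Lemma~\ref{L:bg2n contains nbhd} produces a symmetric open neighbourhood $U$ of $1 \in G$ with $U \subseteq B_g(2n)$, where $n = \dim G$. The set $\bigcup_{k \geq 1} U^k$ is then an open subgroup of $G$, and since $G$ is connected it must equal $G$. Compactness now gives $G = U^m$ for some finite $m$, and Lemma~\ref{L:BXn properties}\ref{L:BXn:mult} yields $G \subseteq B_g(2nm)$, so $\|G\|_g < \infty$. Hence $G$ is bounded by Corollary~\ref{C:tfae bounded} (noting that $\{g\} \in \Gamma_1(G)$ by Lemma~\ref{L:bg2n contains nbhd} again).

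For $\Delta_1(G) = \infty$, the idea is to show that a single element close to the identity cannot normally generate $G$ in few steps. Equip $G$ with a bi-invariant Riemannian metric (standard for compact Lie groups, as already cited in the proof of Theorem~\ref{thm:dense_Lie}) and let $\nu(g) = d(1,g)$ be the associated conjugation-invariant norm. Then $\OP{diam}(\nu) > 0$ since $G$ is non-trivial, while $\nu$ takes arbitrarily small positive values, as $G$ is a positive-dimensional manifold. For any $g \neq 1$ one has $\{g\} \in \Gamma_1(G)$ by Lemma~\ref{L:bg2n contains nbhd}, and Proposition~\ref{P:lipschitz-max} applied to the identity homomorphism $G \to G$ with $S = \{g\}$ gives
\[
\nu(h) \leq \nu(g) \cdot \|h\|_g \quad \text{for all } h \in G,
\]
so that
\[
\|G\|_g \geq \frac{\OP{diam}(\nu)}{\nu(g)}.
\]
Letting $\nu(g) \to 0^+$ through $g \neq 1$ forces $\|G\|_g \to \infty$, and therefore $\Delta_1(G) = \sup_{g \neq 1} \|G\|_g = \infty$.

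No step strikes me as the main obstacle: the boundedness assertion is a routine connectedness/compactness argument once Lemma~\ref{L:bg2n contains nbhd} is in hand, and the divergence of $\Delta_1(G)$ is a direct application of the Lipschitz inequality in Proposition~\ref{P:lipschitz-max} to a bi-invariant Riemannian norm. The only mild subtlety is verifying that $\{g\} \in \Gamma_1(G)$ for every $g \neq 1$, which is automatic from $Z(G) = 1$ via Lemma~\ref{L:bg2n contains nbhd}.
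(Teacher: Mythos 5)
Your proof is correct and follows essentially the same route as the paper: boundedness via Lemma~\ref{L:bg2n contains nbhd} plus connectedness and compactness, and divergence of $\Delta_1(G)$ by producing normally generating singletons of arbitrarily small bi-invariant norm. The only cosmetic difference is that you apply Proposition~\ref{P:lipschitz-max} directly (in effect re-deriving the $n=1$ case of the nonsqueezing Lemma~\ref{L:non-squeezing}), whereas the paper cites Proposition~\ref{P:dense subgroups}\ref{I:dense subgroups:Delta_1=infty} with $N=1$, which encapsulates exactly that argument.
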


\begin{proof}
The connectedness of $G$ and Lemma \ref{L:bg2n contains nbhd} imply that any $g \in G \setminus Z(G) \neq \emptyset$ normally generates, and together with the compactness of $G$ that $\|G\|_g < \infty$.
It follows from Corollary \ref{C:tfae bounded} that $G$ is bounded.


Let $d$ be any bi-invariant Riemannian metric on $G$ \cite[Theorem 16.2]{MR3136522}, and let $\nu$ be the associated conjugation-invariant norm; then $\nu$ induces the usual topology on $G$.
We have $\Delta_1(G)=\infty$ by Proposition \ref{P:dense subgroups}\ref{I:dense subgroups:Delta_1=infty}, taking the subgroup $N$ to be 1.
\end{proof}

The non-compact case is more involved.
Recall that the center of a simple Lie $G$ contains any proper normal subgroup of $G$.

\begin{lemma}\label{L:faithful repn of sl2r}
Let $G$ be a connected Lie group with $Z(G)=1$ and Lie algebra $\F g$.
Let $\psi \colon \F{sl}_2(\B R) \to \F g$ be an injective Lie algebra homomorphism.
Then there exists a smooth homomorphism $\varphi \colon \OP{SL(2,\B R)} \to G$ such that $L(\varphi)=\psi$.
\end{lemma}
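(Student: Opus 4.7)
The plan is to integrate $\psi$ on the simply connected cover $\widetilde{\OP{SL}(2,\B R)}$ and then show the resulting homomorphism descends to $\OP{SL}(2,\B R)$. First, by the standard integration theorem for Lie algebra maps with simply connected domain, there exists a unique smooth homomorphism $\tilde\varphi \colon \widetilde{\OP{SL}(2,\B R)} \to G$ with $L(\tilde\varphi) = \psi$. Let $\pi \colon \widetilde{\OP{SL}(2,\B R)} \to \OP{SL}(2,\B R)$ denote the covering map and $K = \ker \pi$, a central subgroup isomorphic to $\pi_1(\OP{SL}(2,\B R)) \cong \B Z$. To produce $\varphi$, I must show $\tilde\varphi(K) = \{1\}$.

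The crucial step is to consider the composition $\rho = \Ad_G \circ \tilde\varphi \colon \widetilde{\OP{SL}(2,\B R)} \to \GL(\F g)$, a finite-dimensional real representation whose derivative is $\OP{ad}_{\F g} \circ \psi$. I claim $\rho$ factors through $\OP{SL}(2,\B R)$. To see this, complexify the derivative to get a Lie algebra map $\F{sl}_2(\B C) \to \F{gl}(\F g \otimes_\B R \B C)$; since $\OP{SL}(2,\B C)$ is simply connected, this integrates to a holomorphic representation of $\OP{SL}(2,\B C)$, and restriction to the real form $\OP{SL}(2,\B R)$ gives a smooth homomorphism $\bar\rho \colon \OP{SL}(2,\B R) \to \GL(\F g \otimes \B C)$. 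Its image preserves the real subspace $\F g$ (because $\OP{SL}(2,\B R)$ is generated by $\exp(\F{sl}_2(\B R))$, whose elements clearly do), so $\bar\rho$ takes values in $\GL(\F g)$. By uniqueness of integration on the simply connected $\widetilde{\OP{SL}(2,\B R)}$, the representations $\rho$ and $\bar\rho \circ \pi$ agree, establishing the factorization.

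Consequently $\rho(K) = \bar\rho(\pi(K)) = \{1\}$, so $\tilde\varphi(K) \subseteq \ker \Ad_G = Z(G) = \{1\}$ by the hypothesis on $G$. Thus $\tilde\varphi$ descends to the required smooth homomorphism $\varphi \colon \OP{SL}(2,\B R) \to G$ with $L(\varphi) = \psi$. The main obstacle --- that finite-dimensional representations of $\widetilde{\OP{SL}(2,\B R)}$ factor through $\OP{SL}(2,\B R)$ --- relies essentially on the complexification trick and the simple connectedness of $\OP{SL}(2,\B C)$; this special feature of $\F{sl}_2$ is what lets the adjoint composition trivialize $K$ and is the only place where the structure of $\OP{SL}(2,\B R)$ (as opposed to an arbitrary connected Lie group) is used.
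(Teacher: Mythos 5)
Your proof is correct and follows the same overall skeleton as the paper's: integrate $\psi$ over the universal cover $\tilde{S}$ of $\OP{SL}(2,\B R)$ via the integrability theorem, compose with $\OP{Ad}_G$ (which is faithful because $Z(G)=1$), and use finite-dimensional representation theory of $\tilde{S}$ to force the homomorphism to descend. The genuine difference lies in how that representation-theoretic input is obtained. The paper cites a textbook fact (Hilgert--Neeb, Example 16.1.8) that a quotient $\tilde{S}/D$ admitting a faithful finite-dimensional representation must satisfy $|Z(\tilde{S})/D|\leq 2$, and concludes that the image of $\tilde{\varphi}$ is isomorphic to $\OP{SL}(2,\B R)$ or $\OP{PSL}(2,\B R)$, whence the factorization. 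You instead prove the needed fact from scratch: by complexifying the derivative $\OP{ad}\circ\psi$ and integrating over the simply connected group $\OP{SL}(2,\B C)$, you show that every finite-dimensional representation of $\tilde{S}$ factors through $\OP{SL}(2,\B R)$, and applying this to $\OP{Ad}_G\circ\tilde{\varphi}$ kills $\ker(\tilde{S}\to\OP{SL}(2,\B R))$ exactly. Your route is self-contained and arrives at the factorization through $\OP{SL}(2,\B R)$ directly, without the case distinction on the image; the paper's route is shorter on the page but outsources the key step to a reference. (A minor remark: the observation that $\bar\rho$ preserves the real subspace $\F g$ is not actually needed, since the identity $\rho=\bar\rho\circ\pi$ inside $\GL(\F g\otimes_{\B R}\B C)$ already yields $\rho(K)=\{1\}$.)
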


\begin{proof}
Let $\tilde{S}$ be the universal cover of $\OP{SL}(2,\B R)$.  
Then $\tilde{S}$ is simple and any $N \triangleleft \tilde{S}$ is contained in $Z(\tilde{S})$ \cite[Prop. 11.1.4]{\HigNeeb}. 

By the Integrability Theorem of Lie algebra homomorphisms \cite[Theorem 9.5.9]{MR3025417} there exists $\tilde{\varphi} \colon \tilde{S} \to G$ such that $L(\tilde{\varphi})=\psi$.
Let $H$ denote its image and $i \colon H \to G$ the inclusion.
Then $H \cong \tilde{S}/D$ for some $D \leq Z(\tilde{S})$.
Since $Z(G)=1$, $\OP{Ad} \colon G \to \OP{GL}(\F g))
$ is injective.
Thus, $\OP{Ad} \circ i$ is a faithful finite-dimensional representation of $H$ and \cite[Example 16.1.8]{MR3025417} shows that $|Z(\tilde{S})/D| \leq 2$, hence $H\cong \OP{SL}(2,\B R)$ or $H \cong \OP{PSL}(2,\B R)$, thus $\tilde{\varphi}$ factors through $\varphi \colon \OP{SL}(2,\B R) \to G$.
\end{proof}

An element $X$ of a Lie algebra $\F g$ is called {\em nilpotent} if $\OP{ad}(X)\colon \F g \to \F g$ is a nilpotent linear map, i.e., the matrix representing $\OP{ad}(X)$ in some basis of $\F g$  is strictly lower (or upper) triangular.
Let $\Nil(\F g)$ denote the set of nilpotent elements in $\F g$.  Our next result strengthens Lemma~\ref{L:bg2n contains nbhd} because the open set $U$ we obtain does not depend on the choice of $g$.
\begin{proposition}\label{P:simple non-compact Z(G)=1}
Let $G$ be a non-compact simple Lie group of dimension $n$ with $Z(G)=1$ and Lie algebra $\F g$.
\begin{enumerate}[label=(\roman*)]
\item
\label{I:simple non-compact Z(G)=1:Bg nil}
$\exp_G(\Nil(\F g)) \subseteq B_g(2n)$ for every $1 \neq g \in G$.

\item
\label{I:simple non-compact Z(G)=1:Bg U}
There exists a neighbourhood $U=U^{-1}$ of the identity such that $U \subseteq B_g(4n^2)$ for any $g \neq 1$.

\item
\label{I:simple non-compact Z(G)=1:Delta}
$\Delta(G)<\infty$.
\end{enumerate}
\end{proposition}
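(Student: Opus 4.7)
The plan is to prove (i) using Jacobson--Morozov together with the lifting Lemma~\ref{L:faithful repn of sl2r}, deduce (ii) by showing $\Nil(\F g)$ spans $\F g$ and applying (i) to a basis, and then combine (i) with bounded generation of $G$ by unipotents to obtain (iii). For (i), fix a nonzero $X \in \Nil(\F g)$. By Jacobson--Morozov there is an $\F{sl}_2$-triple $\{X, H, Y\} \subset \F g$, giving an injective Lie algebra homomorphism $\psi \colon \F{sl}_2(\B R) \to \F g$ sending the standard upper-triangular nilpotent to $X$. Since $Z(G) = 1$, Lemma~\ref{L:faithful repn of sl2r} lifts $\psi$ to a smooth homomorphism $\varphi \colon \OP{SL}(2, \B R) \to G$ satisfying $\varphi(U(t)) = \exp_G(tX)$; a direct calculation yields $D(s) U(t) D(s)^{-1} = U(s^2 t)$, so $\exp_G(tX)$ is $G$-conjugate to $\exp_G(X) = \varphi(U(1))$ when $t > 0$ and to $\exp_G(X)^{-1}$ when $t < 0$ (using $U(-1) = U(1)^{-1}$). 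By Lemma~\ref{L:bg2n contains nbhd}, $B_g(2n)$ contains an open neighbourhood of the identity for every $g \neq 1$, so $\exp_G(tX) \in B_g(2n)$ for all sufficiently small $|t|$; conjugation-invariance and inversion-closure (Lemma~\ref{L:BXn properties}\ref{L:BXn:inv}) then give $\exp_G(X) \in B_g(2n)$.

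For (ii) I would first observe that $\Nil(\F g)$ spans $\F g$. Non-compactness of $G$ ensures the existence of a nonzero nilpotent $X \in \F g$ (for instance, the $\F n$-summand of an Iwasawa decomposition is nontrivial and consists of nilpotent elements), and the linear span of $\OP{Ad}(G) \cdot X$ is an $\OP{Ad}(G)$-invariant subspace of $\F g$ consisting of nilpotent elements, hence an ideal by Lemma~\ref{P:G simple => Ad irreducible}, hence all of $\F g$ by simplicity. Choose a basis $X_1, \dots, X_n$ of $\F g$ inside $\Nil(\F g)$ and define $\Phi \colon \B R^n \to G$ by $\Phi(t_1, \dots, t_n) = \exp_G(t_1 X_1) \cdots \exp_G(t_n X_n)$. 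Since $d\Phi_0(\partial_i) = X_i$, the map $\Phi$ is a local diffeomorphism at $0$ and its image contains an open neighbourhood $V$ of $1 \in G$, independent of $g$. By (i) and Lemma~\ref{L:BXn properties}\ref{L:BXn:mult}, $V \subseteq B_g(2n)^n = B_g(2n^2) \subseteq B_g(4n^2)$ for every $g \neq 1$. Taking $U = V \cap V^{-1}$ yields the required symmetric open neighbourhood.

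The main obstacle lies in (iii), which rests on a bounded-generation input for $G$: there should exist a constant $K = K(G)$ such that every element of $G$ is a product of at most $K$ elements of $\exp_G(\Nil(\F g))$. For split real simple $G$ this follows from the Bruhat decomposition $G = \bigcup_{w \in W} U^- \, w \, T \, U^+$ together with the expression of each element of the split maximal torus as a bounded product of commutators of exponentials of opposite root vectors, plus the observation that Weyl representatives lie in $\exp_G(\Nil(\F g))^{O(1)}$. For the general non-compact simple case an analogous Langlands-type decomposition reduces matters to handling the anisotropic compact centralizer by a separate commutator argument. Granted this $K$, part (i) gives $\exp_G(\Nil(\F g)) \subseteq B_g(2n)$ for every $g \neq 1$, hence $G \subseteq B_g(2nK)$ by Lemma~\ref{L:BXn properties}\ref{L:BXn:assoc}. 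Since $Z(G) = 1$ and $G$ is connected, $G$ has no nontrivial closed normal subgroups (any such subgroup would contain a nontrivial connected normal subgroup giving a nontrivial ideal in $\F g$), and discrete normal subgroups of connected groups are central; thus $G$ is simple as an abstract group, and Example~\ref{E:deltas of simple groups} concludes $\Delta(G) = \Delta_1(G) \leq 2nK < \infty$.
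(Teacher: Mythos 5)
Parts (i) and (ii) of your proposal are correct. Part (i) is essentially the paper's own argument: Jacobson--Morozov plus Lemma~\ref{L:faithful repn of sl2r} to see that the conjugacy class of $\exp_G(X)$ accumulates at the identity, then Lemma~\ref{L:bg2n contains nbhd} and conjugation-invariance of $B_g(2n)$. Your part (ii) is a mild (and slightly more efficient) variant: the paper takes a single nonzero nilpotent $x$, gets $U\subseteq B_{\exp(x)}(2n)$ from Lemma~\ref{L:bg2n contains nbhd}, and composes with $\exp(x)\in B_g(2n)$ via Lemma~\ref{L:BXn properties}\ref{L:BXn:assoc}; your product map $\Phi$ built from a nilpotent basis works just as well (the incidental claim that the \emph{span} of $\OP{Ad}(G)\cdot X$ consists of nilpotent elements is false in general, but you only use that the orbit does, which is true).

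Part (iii) has a genuine gap. Your entire argument rests on the assertion that there is a constant $K(G)$ with $G=\bigl(\exp_G(\Nil(\F g))\bigr)^{K}$, and you do not prove it. The Bruhat sketch addresses only split groups, and even there the reduction of the torus to unipotents is asserted rather than carried out; for general non-compact simple $G$ you defer to ``a separate commutator argument'' for the anisotropic part, which is precisely where the difficulty lies. The paper avoids any global bounded-generation-by-unipotents input and instead works through the Iwasawa decomposition $G=KAN$ piece by piece: $N=\exp_G(\F n)$ is handled directly by part (i), giving $\|N\|_g\le 2n$; the compact factor $K$ is handled by connectedness and compactness, $K\subseteq U^{r}$ for the $g$-independent neighbourhood $U$ of part (ii), giving $\|K\|_g\le 4n^2r$; and the non-compact abelian factor $A$ --- the part entirely missing from your sketch --- is handled by the Weyl-group averaging identity $\sum_{w\in W(G,A)}w(x)=0$ for $x\in\F a$, which lets one write $\exp_G\bigl((m+1)x\bigr)=\prod_{i=1}^{m}\exp_G(x)\,k_i\exp_G(x)^{-1}k_i^{-1}\in B_K(2m)$ and hence bound $\|A\|_g$ in terms of $\|K\|_g$. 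You would need either to supply this step (or an equivalent one, e.g.\ expressing $A$ as a bounded product of elements of rank-one subgroups and doing the $\SL(2,\B R)$ computation in each restricted-root direction) or to prove your bounded-generation claim outright; as written the general non-compact case is not established. The final passage from $\Delta_1(G)<\infty$ to $\Delta(G)<\infty$ is fine, though abstract simplicity of $G$ should be deduced from Lemma~\ref{L:bg2n contains nbhd} (every $g\ne 1$ normally generates) rather than from an argument about \emph{closed} normal subgroups, which by itself does not rule out non-closed proper normal subgroups.
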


\begin{proof}
\ref{I:simple non-compact Z(G)=1:Bg nil}
Fix $g \neq 1$ and let $X \in \Nil(\F g)$.
By the Jacobson-Morozov Theorem  \cite[Ch. VIII, \S 11.  2, Prop. 2]{MR2109105} $X$ is part of an $\F{sl}_2$-triple $(X,Y,H')$.
Let $e,f,h$ be the standard generators of $\F{sl}_2(\B R)$ \cite[Section 6.2]{MR3025417} and let $\psi \colon \F{sl}_2(\B R) \to \F g$ be the homomorphism defined by mapping the triple $(e,f,h)$ to the triple $(X,Y,H')$.
By Lemma \ref{L:faithful repn of sl2r} there exists a smooth homomorphism $\varphi \colon \SL(2,\B R) \to G$ such that $L(\varphi)=\psi$.
Conjugation by diagonal matrices shows that the closure of the orbit of $e=\left(\begin{smallmatrix}0 & 1 \\ 0 & 0  \end{smallmatrix}\right)$ under the adjoint action of $\OP{SL}(2,\B R)$ contains $0 \in \F{sl}_2(\B R)$ and therefore the conjugacy class of $\exp_{\OP{SL}(2,\B R)}(e)$ contains the identity matrix in its closure.
By the continuity and naturality of $\exp$, the conjugacy class of $\exp_G(X)$ contains $1 \in G$ in its closure (see \cite[Equation (1.82)]{MR1920389}), and therefore it intersects $B_g(2n)$ non-trivially by Lemma \ref{L:bg2n contains nbhd}.
Since $B_g(2n)$ is closed under conjugation, $\exp_G(X) \in B_g(2n)$.

\ref{I:simple non-compact Z(G)=1:Bg U}
By \cite[Theorem 5.1]{MR364552} $G=KNK$, where $G=KAN$ is the Iwasawa decomposition associated to $\F g = \F k + \F a +\F n$, see \cite[Section VI.4]{MR1920389} or \cite[Section 13.3]{MR3025417}.
By \cite[Theoren 6.31(f)]{\Knapp} $K$ is compact.
By \cite[Theorem 13.3.8 and Lemma 13.3.5]{\HigNeeb}, $N=\exp_G(\F n)$  and $\F n \subseteq \Nil(\F g)$.
Since $G$ is not compact, $N \neq 1$, hence $\Nil(\F g) \neq 0$.

Choose some $0 \neq X \in \Nil(\F g)$.
Since $\exp_G$ is a local diffeomorphism at $0$, by replacing $X$ with a scalar multiple, we may assume that $\exp_G(X) \neq 1$ and we may fix a neighbourhood of the identity $U \subseteq B_{\exp(X)}(2n)$ guaranteed by Lemma \ref{L:bg2n contains nbhd}.
For any $g \neq 1$ part \ref{I:simple non-compact Z(G)=1:Bg nil} shows that $\exp(X) \in B_g(2n)$ and it follows from Lemma \ref{L:BXn properties}\ref{L:BXn:assoc} that $U \subseteq B_g(2n \cdot 2n)$.

\ref{I:simple non-compact Z(G)=1:Delta}
We use the Iwasawa decomposition $G=KAN$ and $U$ from part \ref{I:simple non-compact Z(G)=1:Bg U}.
Since $K$ is compact and $G$ is connected, $K \subseteq U^{r}=U \cdot U \cdots U$ for some $r\in \B N$.
Consider an arbitrary $1 \neq g \in G$.
It follows from Lemma \ref{L:BXn properties} that $K \subseteq B_U(r) \subseteq B_g(4n^2r)$.
Part \ref{I:simple non-compact Z(G)=1:Bg nil} shows that $N=\exp_G(\F n) \subseteq \exp_G(\Nil(\F g)) \subseteq B_g(2n)$.
Combine this with \cite[Theorem 5.1]{MR364552} which asserts that $G=KNK$, to deduce that $\|G\|_g \leq 8n^2r+2n$, which is independent of $g$.
Since $g \in G$ was arbitrary, 
Lemma \ref{L:bg2n contains nbhd} combined with Proposition \ref{P:dense subgroups}\ref{I:dense subgroups:Delta_1=Delta} shows that $\Delta(G) = \Delta_1(G) < \infty$.
\end{proof}

If $G$ is semisimple then $Z(G)$ is a discrete subgroup and $G/Z(G)$ has trivial centre by \cite[Proposition 6.30]{\Knapp}.
Moreover, $G/Z(G)$ is the product of simple Lie groups with trivial centre.
This follows from elementary covering space theory (see \cite[Theorems 9.5.4]{MR3025417}) and the Integrability Theorems \cite[Theorems 9.4.8 and  9.5.9]{MR3025417} which imply that the universal cover $\tilde{G}$ of $G/Z(G)$ is the product of simply connected simple Lie groups, hence this is the case for $\tilde{G}/Z(\tilde{G})$.

Also, $Z(G)$ is a finitely generated abelian group.
This follows by combining \cite[Theorems 9.5.4 and 13.1.7]{MR3025417} and \cite[Proposition 6.30 and Theorem 6.31]{\Knapp} which show that $Z(G)$ is isomorphic to a subgroup of $\pi_1(G/Z(G)) \cong \pi_1(K)$, where $K$ is a maximal compact (Lie) subgroup of $G/Z(G)$.

Let $G$ be a Lie group and $A$ a $G$-module equipped with a metric (in this paper we will only be interested in the case of the trivial action of $G$).
One can study the (bounded) continuous cohomology groups $H^*_c(G,A)$ and $H^*_{cb}(G,A)$ defined by means of continuous cochains $f \colon G^p \to A$.
The open sets in $G$ and $A$ define the Borel $\sigma$-algebras on $G$ and $A$ and one can consider the (bounded) Borel cohomology groups $H^*_B(G,A)$ and $H^*_{Bb}(G,A)$ defined by means of the cochains $f \colon G^p \to A$ that are (bounded) Borel maps.
There are obvious inclusion of cochain complexes which give rise to comparison maps between these cohomology groups and which fit into the commutative diagram
\[
\xymatrix{
H^*_{cb}(G,A) \ar[r]^{\iota_*} \ar[d]_{j_*} &
H^*_{Bb}(G,A) \ar[d]^{j_*} \\
H^*_c(G,A) \ar[r]^{\iota_*} &
H^*_B(G,A).
}
\]
A nice survey 
can be found in \cite[\S 2-4]{MR0494071} and in Moore's  paper \cite{MR0171880}.

\begin{proposition}\label{P:infinite cyclic centre}
Let $H$ be a connected semisimple Lie group.
Assume that $Z=Z(H)$ is an infinite cyclic group.
Then there exists an unbounded quasimorphism $q \colon H \to \B R$.
\end{proposition}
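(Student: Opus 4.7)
The plan is to build the quasimorphism from the central extension $1 \to Z(H) \to H \to G \to 1$, where $G = H/Z(H)$. Its classifying Borel $2$-cocycle is integer-valued and unbounded, but after enlarging to $\B R$-coefficients its class should be represented by a bounded Borel cocycle, and $q$ will come from the resulting coboundary decomposition.

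Fix a generator $z_0$ of $Z(H) \cong \B Z$ and a Borel section $s \colon G \to H$ of $\pi$. This produces a Borel cocycle $c \colon G \times G \to \B Z$ defined by $s(g)s(h) = s(gh)\, z_0^{c(g,h)}$. Every $h \in H$ is uniquely $h = s(\pi(h))\, z_0^{n(h)}$ with $n(h) \in \B Z$, and the cocycle identity gives $n(gh) = n(g) + n(h) + c(\pi(g), \pi(h))$.

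The crux is to show that the image $[c]_{\B R} \in H^2_B(G, \B R)$ lies in the image of the comparison map $j_* \colon H^2_{Bb}(G, \B R) \to H^2_B(G, \B R)$; equivalently, to decompose $c = c_b + df$ with $c_b$ a bounded Borel cocycle and $f \colon G \to \B R$ a Borel function. The long exact sequence of the covering $\B Z \to H \to G$ yields $\pi_1(G) \cong \pi_1(H) \oplus \B Z$; writing $G = G_1 \times \cdots \times G_r$ as a product of centerless simple Lie groups, and using that a centerless simple Lie group has infinite fundamental group exactly when it is of Hermitian noncompact type, at least one factor $G_0$ of $G$ must be Hermitian of noncompact type. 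By Moore's theorem, Borel and continuous cohomology agree on second countable locally compact groups, and under this identification the image of the extension class in $H^2_B(G, \B R) \cong H^2_c(G, \B R)$ is a nonzero multiple of the pullback of the K\"ahler class of the Hermitian symmetric space of $G_0$. These K\"ahler classes lift to bounded continuous cocycles by results of Dupont, Gromov and Guichardet, giving the required decomposition.

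Given $c = c_b + df$, set $q \colon H \to \B R$ by $q(h) = n(h) + f(\pi(h))$. A direct computation gives
\[
q(gh) - q(g) - q(h) = c(\pi(g), \pi(h)) - (df)(\pi(g), \pi(h)) = c_b(\pi(g), \pi(h)),
\]
which is uniformly bounded in $g, h$, so $q$ is a quasimorphism. It is unbounded because $q(z_0^k) = k + f(1_G)$ tends to $\pm \infty$ with $k$. The main obstacle is the middle step---showing that the real extension class is represented by a bounded cocycle---which invokes the theory of bounded continuous cohomology of semisimple Lie groups, in particular the boundedness of K\"ahler classes of Hermitian symmetric spaces.
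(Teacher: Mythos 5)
Your proof is correct, and it reaches the same goal by a genuinely different route at the two places where the real work happens. The paper also starts from the central extension and its Borel cocycle, but it produces the bounded representative by lifting the \emph{integral} Borel class $[\epsilon^H]\in H^2_B(G,\B Z)$ through the surjection $H^2_{Bb}(G,\B Z)\to H^2_B(G,\B Z)$ (a theorem quoted from the literature), and it then has to prove two nonvanishing statements to conclude anything: injectivity of $H^2_B(G,\B Z)\to H^2_B(G,\B R)$ (via $H^1_B(G,T)=0$, using that $G_{ab}$ is trivial) so that the real class is nonzero, and injectivity of $\pi^*$ on $H^2_{cb}(\cdot,\B R)$ (via amenability of $Z$) so that the primitive $q$ of the pulled-back bounded cocycle is forced to be unbounded. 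You instead pass directly to real coefficients, identify $H^2_B(G,\B R)\cong H^2_c(G,\B R)$ with the span of the K\"ahler classes of the Hermitian factors, and use the classical boundedness of K\"ahler classes to get surjectivity of $H^2_{cb}(G,\B R)\to H^2_c(G,\B R)$; your explicit formula $q=n+f\circ\pi$ then makes unboundedness immediate by evaluating on $z_0^k$, so you never need either nonvanishing statement (indeed, even if $[c]_{\B R}$ were zero your $q$ would be an unbounded homomorphism, which is still a quasimorphism). The trade-offs: your argument needs the structural fact that some factor of $G$ is Hermitian of noncompact type (your $\pi_1$ argument for this is fine) and the Dupont--Gromov boundedness of K\"ahler classes, whereas the paper's is coefficient-theoretic and produces a \emph{continuous} $q$, while yours is only Borel --- which is harmless here, since neither the statement nor its application in Theorem \ref{T:Lie groups boundedness properties} requires continuity. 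Two small imprecisions worth fixing: the extension class is in general a linear combination of the K\"ahler classes of \emph{all} Hermitian factors rather than a multiple of a single one (only surjectivity of the comparison map on all of $H^2_c(G,\B R)$ is needed, so this costs nothing), and $q(z_0^k)=k-m+f(1_G)$ where $s(1_G)=z_0^m$, unless you normalise $s(1_G)=1_H$.
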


\begin{proof}
Set $G=H/Z$ and let $\pi \colon H \to G$ be the quotient map.
Then $Z(G)$ is trivial and $G$ is a product of centre-free simple Lie groups.
In particular the abelianisation of $G$ is trivial.

Consider the short exact sequence of trivial $G$-modules
\[
0 \to \B Z \xto{i} \B R \to T \to 0.
\]
There results a long exact sequence in Borel cohomology \cite[p. 43]{MR0171880}
\[
\dots \to H^1_B(G,T) \to H^2_B(G,\B Z) \xto{i_*} H^2_B(G,\B R) \to H^2_B(G,T) \to \dots.
\]
Also, since $G$ acts on $\B R$ trivially it follows from \cite[p.\ 45]{MR0171880} that  $H^1_B(G,T)$ is isomorphic to the group of continuous homomorphisms $G \to T$, and since $G_{ab}$ is trivial, $H^1_B(G,T)=0$.
It follows that $H^2_B(G,\B Z) \xto{i_*} H^2_B(G,\B R)$ is injective.

The comparison maps between (bounded) continuous and (bounded) Borel cohomology and the naturality with respect to group homomorphisms give rise to the following commutative diagram:
\[
\xymatrix{
H^2_{c}(H,\B R) \ar[d]_{\iota_*}^\cong &
H^2_{c}(G,\B R) \ar[d]_{\iota_*}^{\cong} \ar[l]_{\pi^*} &
H^2_{cb}(G,\B R) \ar[d]_{\iota_*}^{\cong} \ar[r]^{\pi^*}_\cong  \ar[l]_{j_*} &
H^2_{cb}(H,\B R) \ar[d]_{\iota_*}^\cong
\\
H^2_B((H,\B R) &
H^2_B(G,\B R) \ar[l]_{\pi^*} &
H^2_{Bb}(G,\B R) \ar[l]_{j_*} \ar[r]^{\pi^*} &
H^2_{Bb}(H,\B R) \\
H^2_B(H,\B Z) \ar[u]^{i^*} &
H^2_B(G,\B Z) \ar[l]_{\pi^*} \ar@{>->}[u]^{i_*} &
H^2_{Bb}(G,\B Z) \ar[u]^{i_*} \ar[r]^{\pi^*} \ar[l]_{j_*} &
H^2_{Bb}(H,\B Z) \ar[u]^{i_*}
}
\]
The first two vertical arrows in the first row are isomorphisms by \cite[Theorem A]{MR3063901}.
The last two are isomorphisms by \cite[Item (2.i) in Section 2.3, p.\ 529] {MR2680425}.
This result uses the regularization operator $R^*$ in \cite[Section 4]{MR543215}; it is immediate from its definition and from the definition of the chain homotopies in {\em loc.\ cit.}\ that $R^*$ restricts to cochain equivalences $R^* \colon C_{Bb}^*(G,\B R) \to C_{cb}^*(G,\B R)$ and $R^* \colon C_{Bb}^*(H,\B R) \to C_{cb}^*(H,\B R)$ of the bounded cochain complexes.
See also the remarks in \cite[p. 553]{MR2854105}.
The last horizontal arrow in the first row is an isomorphism by \cite[Corollary 7.50.10]{MR1840942} since $\ker(\pi)=Z$ is amenable.
The second vertical arrow in the second row is injective as we have seen above.

Since $H$ is connected, the central extension $Z \to H \to G$ is not trivial (i.e., not split).
Also, by \cite[p. 45]{MR0171880} or \cite{MR0089998}, $H^2_B(G,Z)$ is isomorphic to $\OP{Ext}(G,Z)$, the group of equivalence classes of extensions of topological groups.
Hence $H$ gives rise to a non-trivial class $[\epsilon^H] \in H^2_B(G,\B Z)$.
By \cite[Theorem 1.1]{MR2854105} the arrow $j_*$ in the last row is surjective, hence there exists $[\epsilon^H_b] \in H^2_{Bb}(G,\B Z)$, a preimage of $[\epsilon^H]$.
Let $[f] \in H^2_B(G,\B R)$ and $[f_b] \in H^2_{Bb}(G,\B R)$ be the images of $[\epsilon^H]$ and $[\epsilon^H_b]$ under $i^*$.
Since the second vertical arrow $i_*$ in the second row of the diagram is injective, $[f] \neq 0$, and since $j_*([f_b])=[f]$, also $[f_b] \neq 0$.
Since the vertical maps $\iota_*$ are isomorphisms, there exist $[f_c] \in H^2_c(G,\B R)$ and $[f_{cb}] \in H^2_{cb}(G,\B R)$ such that $\iota_*[f_c]=[f]$ and $\iota_*[f_{cb}]=[f_b]$.

It is a standard fact that $\pi^*[\epsilon^H]$ is the trivial element in $H^2_B(H,\B Z)$ (because $\epsilon^H$ is the coboundary of the Borel section $\sigma \colon G \to H$ used to define $\epsilon^H$).
It follows that $\pi^*[f_c]=0$ since the $\iota_*$ are isomorphisms and
\[
\iota_*\pi^*[f_c]=\pi^*[f] = \pi^* i_*[\epsilon^H]=i_*\pi^*[\epsilon^H]=0.
\]
Since $\iota_*$ and $\pi^*$ at the top right-hand corner of the diagram are isomorphisms, 
\[
\pi^*[f_{cb}] \neq 0 \qquad \text{  (in $H^2_{cb}(H,\B R)$)}.
\]
The first row of the diagram above is part of the commutative diagram
\[
\xymatrix{
H^2_{cb}(G,\B R) \ar[r]^{j_*} \ar[d]^{\pi^*} &
H^2_c(G,\B R) \ar[d]^{\pi^*} 
\\
H^2_{cb}(H,\B R) \ar[r]^{j_*} &
H^2_c(H,\B R).
}
\]
Hence $j_*(\pi^*[f_{cb}])=\pi^*(j_*[f_{cb}])=\pi^*[f_c]=0$.
Therefore there exists a continuous map $q \colon H \to \B R$ such that $\pi^*f_{cb}=\partial q$.
It must be unbounded, or else $[\pi^*f_{cb}]=0$ which is a contradiction.
It is also a quasimorphism: for $f_{cb}$ is a bounded $2$-cocycle on $G$, so for any $h_1, h_2 \in H$ we get
\begin{multline*}
|q(h_1 h_2)-q(h_1)-q(h_2)| = |\partial q(h_1,h_2)| = |(\pi^*f_{cb})(h_1,h_2)| = \\
|f_{cb}(\pi(h_1),\pi(h_2))| < M,
\end{multline*}
where $M$ is a bound for $f_{cb}$.
\end{proof}

\begin{proof}[Proof of Theorem \ref{T:Lie groups boundedness properties}]
Set $H=G/Z(G)$.
Then $Z(H)=1$ and $H=H_1 \times \dots \times H_k$ is a product of simple Lie groups with trivial center.
Apply Lemma \ref{L:bg2n contains nbhd} to each $H_i$ to deduce that $H$ is finitely normally generated.
It follows that $G$ is finitely normally generated because $Z(G)$ is finitely generated.

Suppose that $|Z(G)|<\infty$.
Each factor $H_i$ is bounded by Propositions \ref{P:simple compact Z(G)=1} and \ref{P:simple non-compact Z(G)=1}\ref{I:simple non-compact Z(G)=1:Delta}.
Therefore $H$ is bounded by Lemma \ref{L:product of simple uniformly bounded}\ref{I:fcg bounded for product} and it follows from Lemma \ref{L:finite_extn new}\ref{I:finite_extn new:1} that $G$ is bounded.
This proves the ``if'' statement in \ref{i:ZG infinite => unbounded}.

Suppose that $H$ a compact factor, say $H_1$.
Then $\Delta_1(H_1) = \infty$ by Proposition \ref{P:simple compact Z(G)=1} and Lemma \ref{L:finite gen quotient not strongly bounded}\ref{I:finite gen quotient not strongly bounded:2} shows that $G$ is not strongly bounded.
This proves \ref{i:G compact => not strongly bounded}.

Suppose that $H_1,\dots,H_k$ are not compact.
Then $\Delta(H_i)<\infty$ by Propostion \ref{P:simple non-compact Z(G)=1}\ref{I:simple non-compact Z(G)=1:Delta} and Lemma \ref{L:product of simple uniformly bounded}\ref{I:u bounded for product of simple} shows that $\Delta(H)<\infty$ (because each $H_i$ is simple as an abstract group by Lemma \ref{L:bg2n contains nbhd}).
Then $\Delta(G)<\infty$ by Lemma \ref{L:finite_extn new}\ref{I:finite_extn new:2}.
This proves \ref{i:G non compact => uniformly bounded}.

Assume that $Z(G)$ is infinite.
Since $Z(G)$ is finitely generated, it contains a factor isomorphic to $\B Z$ with complement $Z(G)'$.
Set $K=G/Z(G)'$.
Then $Z(K) \cong \B Z$ (because $Z \leq G$ is discrete and closed).
By Lemma \ref{L:finite gen quotient not strongly bounded} it suffices to prove that $K$ is unbounded.
This follows from Proposition~\ref{P:infinite cyclic centre}  and from \cite[Lemma 3.6]{MR2819193} which implies that all quasimorphisms on a bounded finitely normally generated group must be bounded.
\end{proof}

%
%
\section{Linear algebraic groups}\label{S:lin-alg-groups}


Throughout this section $G$ denotes a linear algebraic group \emph{over an algebraically closed field $K$}.  
The Zariski closure of a subset $X$ of an algebraic variety over $K$ is denoted $\overline{X}$.  
If $A$ is a constructible dense subset of an irreducible variety $X$ then $A$ contains an open subset of $X$, \cite[AG.1.3]{MR1102012}. 
Indeed, $A = \bigcup_{i=1}^n F_i \cap U_i$ where the $F_i$ are closed and the $U_i$ are open, so $X=\overline{A} \subseteq \bigcup_{i=1}^n F_i$, and since $X$ is irreducible $F_i=X$ for some $i$.  
If $G$ is any algebraic group and $A,B \subseteq G$ then $\overline{A} B \subseteq \overline{AB}$ because $\overline{A}b = \overline{Ab} = \overline{AB}$ for any $b \in B$.

The next lemma is a slight improvement on \cite[7.5 Proposition]{MR0396773} and its proof, which we follow closely. Our addition is the upper bound for $k$. 

\begin{lemma}
\label{L:commutator_gen2}
 Let $G$ be a linear algebraic group over an algebraically closed field $K$.  Let
$\{f_i \colon V_i \to G\}_{i \in I}$ be a family of morphisms from irreducible
varieties $V_i$ such that $1 \in W_i:= f_i(V_i)$ for every $i \in I$.  Let $G'$
be the closed subgroup generated by $\bigcup_{i \in I} W_i$.  Then $G'$ is a
connected closed subgroup of $G^0$ and there are sequences $i_1,\dots,i_k \in
I$ and $e_1,\dots,e_k \in \{\pm 1\}$ for some $k \leq 2\OP{dim} G$, such that
$G'=W_{1_1}^{e_1}\cdots W_{i_k}^{e_k}$.
\end{lemma}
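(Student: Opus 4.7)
The plan is to follow the standard argument for Borel's Proposition 7.5 while keeping careful track of the number of factors. For a finite sequence $\alpha = ((i_1,e_1), \ldots, (i_k,e_k))$ of indices with signs, write $W_\alpha := W_{i_1}^{e_1} \cdots W_{i_k}^{e_k}$. This is the image of the morphism $V_{i_1} \times \cdots \times V_{i_k} \to G$ sending $(v_1,\ldots,v_k)$ to $f_{i_1}(v_1)^{e_1} \cdots f_{i_k}(v_k)^{e_k}$, so by Chevalley's theorem $W_\alpha$ is constructible in $G$, and as the image of an irreducible variety it is irreducible; in particular $\overline{W_\alpha}$ is irreducible. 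Because $1\in W_i$ and also $1\in W_i^{-1}$ for every $i$, we have the monotonicity $W_\alpha \subseteq W_{\alpha\beta}$ for every concatenation of sequences.

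Next, I would build $\alpha^*$ greedily: start from the empty sequence and at each step append a pair $(i,e)$ provided this strictly increases $\OP{dim}\,\overline{W_{\cdot}}$, stopping when no such improvement is possible. Since dimensions are bounded by $\OP{dim}\,G$ and strictly increase at each step, the process terminates with $|\alpha^*|\leq \OP{dim}\,G$. At termination, for every $(i,e)$ the irreducible closed set $\overline{W_{\alpha^*(i,e)}}$ contains $\overline{W_{\alpha^*}}$ and has the same dimension, forcing equality. Hence $W_{\alpha^*}\cdot W_i^e \subseteq \overline{W_{\alpha^*}}$, and iterating gives $W_{\alpha^*}\cdot W_\beta \subseteq \overline{W_{\alpha^*}}$ for every $\beta$. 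Combined with $1\in W_{\alpha^*}$, this shows the abstract subgroup $H = \langle \bigcup_i W_i\rangle$ is contained in $\overline{W_{\alpha^*}}$, so $G' = \overline{H} = \overline{W_{\alpha^*}}$ (the reverse inclusion being trivial). This set is irreducible and contains $1$, hence is connected, and $\overline{H}$ is a subgroup by the standard $\overline{A}\cdot\overline{B}\subseteq\overline{AB}$ argument applied to multiplication and inversion. Thus $G'$ is a connected closed subgroup of $G^0$.

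To upgrade the closure equality $\overline{W_{\alpha^*}} = G'$ to an honest equality $G' = W_\gamma$ with a short product, I would use the fact recorded at the beginning of the section: since $W_{\alpha^*}$ is a constructible dense subset of the irreducible variety $G'$, it contains a non-empty Zariski-open subset $U$ of $G'$. For any $g\in G'$ the translate $gU^{-1}$ is a non-empty open subset of $G'g = G'$, and irreducibility of $G'$ forces $U\cap gU^{-1} \neq \emptyset$, giving $g\in U\cdot U^{-1} \subseteq W_{\alpha^*}\cdot W_{\alpha^*}^{-1}$. Writing $W_{\alpha^*}^{-1} = W_{\beta^*}$ where $\beta^* = ((i_k,-e_k),\ldots,(i_1,-e_1))$ is the reverse of $\alpha^*$ with negated signs, we conclude that $g\in W_{\alpha^*\beta^*}$. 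Thus $\gamma = \alpha^*\beta^*$ works and has length $2|\alpha^*|\leq 2\OP{dim}\,G$.

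The only genuine issue beyond the existence statement in Borel is extracting the explicit bound: the greedy dimension-increase step caps $|\alpha^*|$ at $\OP{dim}\,G$, and then the open-dense-translate trick doubles this while replacing closure by strict equality. No tool heavier than Chevalley's theorem together with irreducibility plus dimension counting is required.
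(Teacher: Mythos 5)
Your proposal is correct and follows essentially the same route as the paper: the greedy dimension-increasing construction of the word (capped at $\OP{dim} G$ steps), followed by the observation that the resulting constructible dense subset of the irreducible group $G'$ contains a nonempty open set, whence a product of two copies covers $G'$. One harmless slip: from $U\cap gU^{-1}\neq\emptyset$ you actually get $g\in U\cdot U$ (which is what the paper uses, via $D_nD_n=G'$), not $g\in U\cdot U^{-1}$; to land in $U\cdot U^{-1}$ you should intersect $gU$ with $U$ -- either variant gives the bound $2\OP{dim} G$.
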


\begin{proof}
We may assume that $G' \neq \{1\}$.
Let us construct by induction a sequence $i_1,i_2,\dots$ in $I$ and a sequence $e_1,e_2,\dots$ in $\{\pm 1\}$ as follows.
Choose any element $i_1 \in I$ such that $W_i \neq \{1\}$ and choose $e_1=1$.
Assume that $i_1,\dots,i_m$ and $e_1,\dots,e_m$ have been chosen.
Choose $i_{m+1}$ and $e_{m+1}$ as follows.
Set
\[
D_m=W_{i_1}^{e_1} \cdots W_{i_m}^{e_m}.
\]
Then $D_m \subseteq G'$ is the image of a morphism of varieties $V_{i_1} \times \dots \times V_{i_m}  \to G \times \cdots \times G \xrightarrow{\text{mult}} G$ and it is therefore an irreducible constructible subset of $G$ \cite[Theorem 4.4 and Proposition A in \S1.3]{MR0396773}.  
Moreover, $1 \in D_m$ since $1 \in W_i$ for all $i$, so $D_m \subseteq G^0$.  
If $\overline{D_m}=G'$ then set $i_{m+1}=i_m$ and $e_{m+1}=e_m$.
So assume that $\overline{D_m} \subsetneq G'$.
There must exist $j_1,\dots,j_r \in I$ such that $W_{j_1}^{\pm 1} \cdots W_{j_r}^{\pm 1} \nsubseteq \overline{D_m}$ (since otherwise $G' = \overline{D_m}$).  
Since $1 \in D_m$ and $1 \in W_{j_1},\ldots,W_{j_r}$, it follows that $\overline{D_m} W_{j_1}^{\pm 1} \cdots W_{j_r}^{\pm 1} \supsetneq \overline{D_m}$ and therefore $\overline{D_m} W_{j_q}^{h_q}  \supsetneq \overline{D_m}$ for some $1 \leq q \leq r$ and some $h_q=\pm 1$.
Choose $i_{m+1}=j_q$ and $e_{m+1}=h_q$.

We have seen above that the sets $D_m$ 
are irreducible and constructible for all $m \geq 1$.  
Also $1 \in D_m$.  
In addition, if $\overline{D_m} \neq G'$ then by construction of $i_{m+1}$, 
\[
\overline{D_{m+1}}=\overline{D_m W_{i_{m+1}}^{e_{m+1}}} \supseteq \overline{D_m} W_{i_{m+1}}^{e_{m+1}} \supsetneq \overline{D_m}.
\]
Since the sets $\overline{D_m}$ are closed subsets of the affine variety $G$, they are affine varieties.
By \cite[Proposition 3.2]{MR0396773}, $\OP{dim}(\overline{D_m}) < \OP{dim}(\overline{D_{m+1}})$. 
Since $\OP{dim}(\overline{D_m}) \leq \OP{dim}(G')$ 
for all $m$ we deduce from the construction of $\{ D_n\}_{n \geq 1}$ that $\overline{D_n}=G'$ for some $n$.
In particular $G'$ is connected, thus it is a closed subgroup of $G^0$. 
Also $n \leq \OP{dim}(G^0)$, since $D_1 \neq \{1\}$.

Finally, $D_n$ is constructible and dense in $G'$, so it contains a nonempty open subset of $G'$.  Hence $D_n D_n=G'$ by \cite[7.4 Lemma]{MR0396773}.
Thus, $G'=(W_{i_1}^{e_1} \cdots W_{i_n}^{e_n}) (W_{i_1}^{e_1} \cdots W_{i_n}^{e_n})$ as needed.
\end{proof}

Recall from \cite[Theorem II.6.8]{MR1102012} that if $N$ is a closed normal subgroup of a linear algebraic group $G$ then $G/N$ is also a linear algebraic group.  
The commutator subgroup $[G,G]$ is closed by \cite[Section I.2.3]{MR1102012}, as is $[G,G^0]$.

\begin{proposition}\label{P:alggp}
Let $G$ be a linear algebraic group over an algebraically closed field $K$.
The following are equivalent.
\begin{itemize}
\item[(a)] $G$ is finitely normally generated;
\item[(b)]  $G/[G,G]$ is finite;
\item[(c)] $G/[G,G^0]$ is finite.
\end{itemize}
\end{proposition}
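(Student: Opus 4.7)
The plan is to prove the equivalence via the implications (c)$\Rightarrow$(a), (c)$\Rightarrow$(b), (a)$\Rightarrow$(c), and (b)$\Rightarrow$(c). The implication (c)$\Rightarrow$(b) is immediate from the inclusion $[G,G^0]\subseteq[G,G]$. For (c)$\Rightarrow$(a), I will apply Lemma~\ref{L:commutator_gen2} to the family of morphisms $\varphi_g\colon G^0 \to G$, $x\mapsto[g,x]$, indexed by $g\in G$: each image $W_g := [g,G^0]$ contains $1$, and together they generate the closed subgroup $[G,G^0]$. The lemma yields $g_1,\dots,g_k\in G$ with $[G,G^0]=W_{g_1}^{e_1}\cdots W_{g_k}^{e_k}$, and since the normal closure of $g_i$ contains every commutator $[g_i,x]=g_i\cdot(xg_i^{-1}x^{-1})$ for $x\in G^0$, the set $\{g_1,\dots,g_k\}$ together with any transversal for the (by~(c), finite) quotient $G/[G,G^0]$ will normally generate $G$.

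The core of the proof consists of the implications (a)$\Rightarrow$(c) and (b)$\Rightarrow$(c), both of which reduce to showing that $\bar Z := G^0/[G,G^0]$ is trivial. As $\bar Z$ is a connected commutative linear algebraic group over the algebraically closed field $K$, I will deduce its triviality by showing it is finitely generated as an abstract group: a nontrivial connected commutative linear algebraic group over an algebraically closed field contains a closed copy of $\B G_m$ or $\B G_a$, and neither is finitely generated---$\B G_m(K)=K^\times$ contains the infinite torsion subgroup $\bigcup_n\mu_n(K)$, while $\B G_a(K)=K$ is either a nontrivial $\B Q$-vector space (characteristic zero) or an infinite-dimensional $\B F_p$-vector space (characteristic $p$).

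Under hypothesis (a), set $\bar G := G/[G,G^0]$. By construction $\bar Z$ is central in $\bar G$ and of finite index $|G/G^0|$. Conjugation in $\bar G$ factors through the finite quotient $\bar G/\bar Z = G/G^0$, so every $\bar G$-conjugate of a given element coincides with one of at most $|G/G^0|$ transversal conjugates; thus an $n$-element normal generating set of $\bar G$ produces an ordinary generating set of size at most $n|G/G^0|$. A finite-index subgroup of a finitely generated group is finitely generated, so $\bar Z$ is finitely generated. Under hypothesis (b), $[G,G]$ has finite index in $G$ and therefore contains $G^0$, so $\bar Z$ is a subgroup of $[G,G]/[G,G^0]$ with the finite quotient $[G,G]/G^0$. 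Fixing a transversal $R$ of $G^0$ in $G$ and using the identities $[ab,c]=a[b,c]a^{-1}\cdot[a,c]$ and $[a,bc]=[a,b]\cdot b[a,c]b^{-1}$ together with the centrality of $\bar Z$, one verifies that modulo $[G,G^0]$ every commutator $[g,h]$ is congruent to $[r,s]$ where $r,s\in R$ represent the cosets of $g,h$ in $G/G^0$; hence $[G,G]/[G,G^0]$ is generated by the finite set $\{[r,s]:r,s\in R\}$, and once again its finite-index subgroup $\bar Z$ is finitely generated.

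The main obstacle is verifying the structural assertion that a nontrivial connected commutative linear algebraic group over an algebraically closed field cannot be finitely generated as an abstract group: this hinges on the decomposition into torus and unipotent parts, requires separate handling of characteristic zero and positive characteristic, and uses crucially that algebraically closed fields are infinite.
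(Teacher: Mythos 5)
Your proposal is correct, and the logical architecture (the cycle of implications, the use of Lemma~\ref{L:commutator_gen2} for (c)$\Rightarrow$(a), and the two reductions to finite generation of $\bar Z=G^0/[G,G^0]$) all check out; but the route through the ``hard'' directions is genuinely different from the paper's. The paper proves (a)$\Rightarrow$(b) by observing that $A=G/[G,G]$ is a finitely generated abelian algebraic group, that $A^n=f(A)$ is a \emph{closed} subgroup of finite index and hence contains $A^0$, so $[A:A^n]\leq[A:A^0]$ is bounded independently of $n$ --- which is impossible if $A$ is infinite (a $\B Z$-factor forces $[A:A^n]\geq n$); and it proves (b)$\Rightarrow$(c) by noting that $H=G/[G,G^0]$ has $H^0$ central, so $[H,H]$ is finite by Schur's theorem, whence $H$ is finite. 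You instead reduce everything to the single claim that a finitely generated connected commutative linear algebraic group over $K$ is trivial, which you propose to prove via the torus--unipotent decomposition and the non-finite-generation of $\B G_m(K)$ and $\B G_a(K)$; your (b)$\Rightarrow$(c) reduction in fact re-proves the relevant piece of Schur's theorem (finite generation of $[G,G]/[G,G^0]$ from a transversal) by hand. Everything you assert is true, including the structural claim you flag as the main obstacle, but that claim is the heaviest ingredient in your argument and the paper's divisibility trick disposes of it for free: since $\bar Z$ is finitely generated abelian, $\bar Z/\bar Z^n$ is finite and $\bar Z^n$ is closed, so $\bar Z^n\supseteq\bar Z^0=\bar Z$ for every $n$; thus $\bar Z$ is divisible and finitely generated, hence trivial --- no decomposition into $\B G_m$'s and $\B G_a$'s and no case split on the characteristic is needed. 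Substituting that observation for your structural assertion would make your proof complete and essentially self-contained.
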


\begin{proof} 
$(a)\implies (b)$.
If $G$ is finitely normally generated then $A=G/[G,G]$ is a finitely normally generated abelian group, hence it is finitely generated. 
Suppose $A$ is infinite and pick $n>|A/A^0|$.  Define
a homomorphism $f\colon A\to A$ by $f(a)=a^n$.  Then the image $f(A)$ is a
closed subgroup of $A$ \cite[7.4 Proposition B(b)]{MR0396773} and 
$[A:f(A)]\geq n$ because $A$ has $\B Z$ as a direct factor.  
But $A^0\subseteq f(A)$ by \cite[7.3 Proposition (b)]{MR0396773}, so $|A:f(A)| \leq |A:A^0|<n$  which is absurd.
Therefore $A$ must be finite.

$(b)\implies (c)$.
Assume that $G/[G,G]$ is finite.
Set $H=G/[G,G^0]$ and let $\pi \colon G \to H$ be the projection.  By
construction of $H$ and \cite[7.4 Proposition B(c)]{MR0396773}, 
$H^0=\pi(G^0) \subseteq Z(H)$, and therefore $[H,H]$ is finite by 
\cite[17.1 Lemma A]{MR0396773}.  Since $H/[H,H]$ is a quotient of $G/[G,G]$, it
is finite and therefore $H$ is finite.

$(c)\implies (a)$. 
For any $g \in G$ consider the morphism of varieties $f_g \colon G^0 \xrightarrow{x \mapsto [g,x]} G$.
Then $1\in \OP{Im}(f_g)$ and $[G,G^0]$ is the closed subgroup generated by $\bigcup_{g \in G} \OP{Im}(f_g)$.
Lemma \ref{L:commutator_gen2} implies that there is a finite $T\subseteq G$ such that $[G,G^0]$ is generated by $\bigcup_{g \in T}\OP{Im}(f_g)$, hence it is generated by $\OP{conj}_{G^0}(T)$.
If $G/[G,G^0]$ is finite  then $G$ is normally generated by $T$ and any set
of representatives in $G$ for the cosets of $[G,G^0]$.
\end{proof}

\begin{theorem}
\label{T:alggp_unif_bd}
 Let $G$ be a finitely normally generated linear algebraic group over an algebraically closed field $K$.
Then $G$ is uniformly bounded and $\Delta(G)\leq 4\,{\rm dim}(G)+ \Delta(G/G^0)$.
\end{theorem}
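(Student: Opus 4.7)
The plan is to split the bound into a contribution from the finite component group $G/G^0$ and a dimension-governed contribution from $G^0$. Fix any $S \in \Gamma(G)$. The image $\bar S$ of $S$ in $G/G^0$ normally generates the finite group $G/G^0$ (algebraic groups have finitely many connected components), so $\|G/G^0\|_{\bar S} \leq \Delta(G/G^0)$. If I can show that $G^0 \subseteq B_S(4\dim G)$, then for every $g \in G$ I lift a shortest normal decomposition of $\bar g$ in $G/G^0$ to a product $y_1\cdots y_n$ of conjugates of elements of $S^{\pm 1}$ in $G$ with $n \leq \Delta(G/G^0)$, write $g = y_1\cdots y_n z$ for some $z\in G^0$, and conclude $\|g\|_S \leq 4\dim G + \Delta(G/G^0)$. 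Taking the supremum over $g$ and $S$ yields the bound on $\Delta(G)$.

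The real task is to show $G^0 \subseteq B_S(4\dim G)$. For this I would apply Lemma~\ref{L:commutator_gen2} to the family of morphisms $f_{s,g}\colon G^0 \to G$ defined by $f_{s,g}(x) = [s^g, x]$ and indexed by $(s,g) \in S \times G$. The domain $G^0$ is irreducible, each image $W_{s,g} = f_{s,g}(G^0)$ contains $1$ (taking $x=1$), and the identity
\[
[s^g, x] \;=\; (gsg^{-1}) \cdot (xg)\, s^{-1}\, (xg)^{-1}
\]
exhibits $[s^g,x]$ as a product of a conjugate of $s$ and a conjugate of $s^{-1}$, so $W_{s,g} \subseteq B_S(2)$. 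Lemma~\ref{L:commutator_gen2} then produces a connected closed subgroup $G' \subseteq G^0$ of the form $W_{s_1,g_1}^{e_1}\cdots W_{s_m,g_m}^{e_m}$ with $m \leq 2\dim G$; since $B_S(2)$ is stable under inversion, this gives $G' \subseteq B_S(2m) \subseteq B_S(4\dim G)$.

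The principal obstacle is the identification $G' = G^0$. First, $G'$ is normal in $G$: conjugation of a generator by $h \in G$ yields $h[s^g,x]h^{-1} = [s^{hg}, hxh^{-1}]$, which lies in $W_{s,hg}$ since $G^0 \triangleleft G$. Second, $G' = [G, G^0]$. The inclusion $\subseteq$ is immediate, and for $\supseteq$ note that modulo $G'$ every element of the form $s^g$ with $s \in S$, $g \in G$ commutes with every element of $G^0$; since these elements generate $G$ (as $S$ normally generates $G$), the coset $G^0/G'$ is central in $G/G'$, so $[G, G^0] \subseteq G'$. Finally, Proposition~\ref{P:alggp} tells us that $G/[G, G^0]$ is finite, hence so is the subgroup $G^0/[G, G^0]$; but $G^0$ is a connected algebraic group and $[G, G^0]$ is closed, so $G^0/[G, G^0]$ is itself a connected algebraic group, and a finite connected group is trivial. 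Therefore $G^0 = [G, G^0] = G' \subseteq B_S(4\dim G)$, and combined with the reduction via $G/G^0$ this completes the proof.
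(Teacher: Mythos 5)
Your proof is correct and follows essentially the same route as the paper: you apply Lemma~\ref{L:commutator_gen2} to the commutator morphisms $y\mapsto [x,y]$ for $x\in\conj_G(S)$, observe each image lies in $B_S(2)$ to get a closed connected normal subgroup inside $B_S(4\dim G)$ containing $[G,G^0]$, invoke Proposition~\ref{P:alggp} to force this subgroup to contain $G^0$, and then add the $\Delta(G/G^0)$ term from the component group. The only (immaterial) difference is at the last identification: the paper deduces $G^0\leq N$ from $N$ being closed of finite index, whereas you show $G'=[G,G^0]$ exactly and argue that the finite connected group $G^0/[G,G^0]$ is trivial; both are valid.
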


\begin{proof}
%
%
 Let $T \subseteq G$ be a finite normally generating set.
Then $G$ is generated by $\OP{conj}_G(T)$.  Any $x \in \OP{conj}_G(T)$ yields a morphism of varieties $f_x \colon G^0 \xrightarrow{y \mapsto [x,y]} G$; set $W_x:= f_x(G^0)$.  Let $S=\bigcup_{x \in \OP{conj}_G(T)} W_x$, and let $N$ be the subgroup of $G$ generated by $S$.  Clearly $S$ is invariant under conjugation by $G$, so $N \unlhd G$.  Every element of $W_x$ is a product of two conjugates of $x$, so $W_x \subseteq B_T(2)$.  It follows from Lemma~\ref{L:commutator_gen2} that $N$ is closed and $N\subseteq B_T(4 \OP{dim}(G))$.

 Let $\pi \colon G \to G/N$ be the canonical projection.  By construction $\pi(G^0)$ commutes with every element of $\pi(\OP{conj}_G(T))$.  Since the latter generates $G/N$ it follows that $\pi(G^0) \subseteq Z(G/N)$ and therefore $N \supseteq [G,G^0]$.  
By Lemma \ref{P:alggp}, $[G,G^0]$ has finite index in $G$, so $N$ has finite index in $G$.
It follows that $G^0 \leq N$ \cite[7.3 Proposition (b)]{MR0396773}.

 The image of $T$ in $G/G^0$ normally generates $G/G^0$, so $B_T(\Delta(G/G^0))$ contains an element from every coset of $G^0$ in $G$.
Since $G^0 \leq N$ we deduce that $G=B_T(4 \OP{dim}(G)+\Delta(G/G^0))$.
But $T$ was arbitrary, so the result follows.
\end{proof}

\begin{remark}
Theorem \ref{T:alggp_unif_bd} fails if the ambient field $K$ is not algebraically closed.  
For example, for $n\geq 3$, the real algebraic Lie group $\OP{SO}(n,\B R)$ is a simple compact Lie group, which by Theorem \ref{T:Lie groups boundedness properties} is not strongly bounded.
\end{remark}


%
%
\section{Non-uniformly bounded groups and simple quotients}\label{S:not-UB}

The purpose of this section is to prove the following theorem.

\begin{theorem}
\label{T:simple_quotient}
Let $G$ be a finitely normally generated group and suppose $G$ has infinitely many maximal normal subgroups.  
Then $G$ is not uniformly bounded.
\end{theorem}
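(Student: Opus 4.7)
The plan is to argue by contradiction, assuming $G$ is uniformly bounded with $\Delta(G) = D < \infty$ and is normally generated by $n$ elements. I will locate a sequence $M_1, M_2, \ldots$ of distinct maximal normal subgroups whose simple quotients $S_i = G/M_i$ are non-abelian, and for which the diagonal map $G \to S_1 \times \cdots \times S_k$ is surjective for every $k$. Combining Lemma \ref{L:finite gen quotient not strongly bounded}\ref{I:finite gen quotient not strongly bounded:2} (which bounds $\Delta_k$ of any quotient by $\Delta_{n+k}(G) \leq D$) with Lemma \ref{L:product of simple uniformly bounded}\ref{I:product delta geq sum delta} (which bounds $\Delta_k$ of a product below by a sum) will then produce the contradiction $k \leq \Delta_k\bigl(\prod_{i=1}^k S_i\bigr) \leq D$ as soon as $k > D$.

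The first step is to reduce to non-abelian simple quotients. The abelianisation $G/[G,G]$ is a quotient of $G$ and therefore uniformly bounded by Lemma \ref{L:finite gen quotient not strongly bounded}; it is also generated by the image of any finite normal generating set (since normal generation equals generation in an abelian group), hence finitely generated. Because $\mathbb{Z}$ is not uniformly bounded, $G/[G,G]$ must be finite. Now the maximal normal subgroups of $G$ with abelian quotient are exactly those containing $[G,G]$, and these correspond bijectively to the maximal subgroups of the finite group $G/[G,G]$, so there are only finitely many. By hypothesis, infinitely many $M_i$ therefore have $S_i := G/M_i$ non-abelian simple. A pigeonhole step lets me thin the sequence so that either (a) all $S_i$ are isomorphic to a fixed non-abelian simple $S$, or (b) the $S_i$ are pairwise non-isomorphic.

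The heart of the argument is to show, in either case, that the diagonal $G \to \prod_{i=1}^k S_i$ is surjective for all $k$, by induction on $k$ via Goursat's lemma. At the inductive step, the image is a subdirect subgroup $H \leq A \times S_k$ where $A = \prod_{i<k} S_i$. Goursat represents $H$ as $\{(a,b) : \phi(aA_2) = bB_2\}$ for some $A_2 \trianglelefteq A$, $B_2 \trianglelefteq S_k$ and an isomorphism $\phi\colon A/A_2 \to S_k/B_2$. Because each $S_i$ is non-abelian simple, the normal subgroups of $A$ are exactly the partial products $\prod_{i\in I} S_i$. If $B_2 = S_k$ then $H = A \times S_k$ and we are done; otherwise $B_2 = \{1\}$ and $A/A_2$ must be simple, forcing $A_2 = \prod_{i\neq j,\, i<k} S_i$ for some $j<k$ and $\phi\colon S_j \to S_k$ an isomorphism. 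Unwinding the Goursat relation on the image of $G$ yields $\pi_k = \phi \circ \pi_j$, hence $M_k = \ker \pi_k = \ker \pi_j = M_j$. This contradicts the distinctness of the $M_i$'s in case (a), and contradicts $S_j \not\cong S_k$ in case (b). Hence $H = A \times S_k$ and the induction closes.

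Finally, $\prod_{i=1}^k S_i$ is a quotient of $G$, so Lemma \ref{L:finite gen quotient not strongly bounded}\ref{I:finite gen quotient not strongly bounded:2} gives
\[
\Delta_k\Bigl(\prod_{i=1}^k S_i\Bigr) \leq \Delta_{n+k}(G) \leq D,
\]
while Lemma \ref{L:product of simple uniformly bounded}\ref{I:product delta geq sum delta} applied with all $k_i = 1$, together with the inequality $\Delta_1(S_i) \geq 1$ (which holds for any non-trivial simple group, by Example \ref{E:deltas of simple groups}), yields
\[
\Delta_k\Bigl(\prod_{i=1}^k S_i\Bigr) \geq \sum_{i=1}^k \Delta_1(S_i) \geq k.
\]
Choosing $k > D$ produces the desired contradiction. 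I expect the main obstacle to be the surjectivity of the diagonal map: without the refinement to case (a) or (b), Goursat's lemma freely allows a proper subdirect image and the product growth estimate collapses. The refinement is precisely what lets the distinctness of the $M_i$'s (in case (a)) or the non-isomorphism of the $S_i$'s (in case (b)) rule out every proper Goursat configuration, forcing the image to be the full product.
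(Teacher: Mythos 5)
Your proof is correct, and the key step --- establishing surjectivity of the diagonal map $G \to \prod_{i=1}^k S_i$ --- is done by a genuinely different route from the paper's. The paper builds its jointly surjective family \emph{adaptively}: calling a family $\C N$ of proper normal subgroups \emph{splitting} if the diagonal map to the product of quotients is onto, it extends a splitting family of maximal normal subgroups one element at a time, by observing that if every maximal normal subgroup contained the kernel $K$ of $G \to G/\C N$, then the maximal normal subgroups of $G$ would biject with those of the finite product of simple groups $G/\C N$, of which there are only finitely many; so some maximal $M$ fails to contain $K$, and then $K$ already surjects onto the simple group $G/M$, making $\C N \cup \{M\}$ splitting. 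This needs no case analysis on whether the quotients are abelian. You instead fix the family in advance and prove that \emph{any} finite set of distinct maximal normal subgroups with non-abelian simple quotients is automatically jointly surjective, via Goursat's lemma and the fact that the normal subgroups of a product of non-abelian simple groups are exactly the partial products; the price is the preliminary reduction showing $G/[G,G]$ is finite (itself a pleasant use of uniform boundedness) so that all but finitely many $M_i$ have non-abelian quotient. Both proofs then conclude with the identical counting step $k \leq \Delta_k(\prod_i S_i) \leq \Delta_{n+k}(G)$. One small remark: your pigeonhole thinning into cases (a) and (b) is redundant --- in the Goursat step the relation $\pi_k = \phi \circ \pi_j$ already forces $M_k = \ker \pi_k = \ker \pi_j = M_j$, contradicting distinctness of the $M_i$ regardless of isomorphism types --- so the ``refinement'' you describe as essential in your closing comment is not actually needed for the argument to close.
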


\begin{definition}
Let $\C N$ be a collection of normal subgroups of a group $G$.
Set $G/\C N = \prod_{N \in \C N} G/N$.
The collection is called {\em splitting} if every $N \in \C N$ is a proper subgroup and the natural homomorphism $G \to G/\C N$ is surjective.
\end{definition}

Clearly, a sub-collection of a splitting collection is splitting and $\C N = \{N\}$ is splitting for any proper $N \triangleleft G$.

\begin{lemma} \label{L:splitting new}
Let $G$ be a finitely normally generated group which admits a splitting collection $\C N$ of size $k$. 
Then $\Delta(G) \geq k$.
\end{lemma}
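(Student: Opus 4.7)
The plan is to exhibit a finite normally generating set $S$ of $G$ together with an element $g \in G$ such that $\|g\|_S \geq k$; this immediately gives $\Delta(G) \geq \|G\|_S \geq \|g\|_S \geq k$. Write $\C N = \{N_1,\dots,N_k\}$, let $\pi_i \colon G \to G/N_i$ be the quotient maps, and let $\pi \colon G \to \prod_{i=1}^k G/N_i$ be their product, which is surjective by the splitting hypothesis. Fix any finite normally generating set $T = \{t_1,\dots,t_m\}$ of $G$. Using surjectivity of $\pi$, for each pair $(j,i)$ I would choose $u_{j,i} \in G$ with $\pi_i(u_{j,i}) = \pi_i(t_j)$ and $\pi_l(u_{j,i}) = 1$ for every $l \neq i$. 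Since the images $\pi(u_{j,1}),\dots,\pi(u_{j,k})$ are supported in pairwise disjoint coordinates of the direct product, $\pi(u_{j,1}\cdots u_{j,k}) = \pi(t_j)$, and hence $n_j := t_j^{-1} u_{j,1}\cdots u_{j,k}$ lies in $\ker\pi = \bigcap_i N_i$. Take $S := \{u_{j,i}\}_{j,i} \cup \{n_j\}_j$: this is finite, and the identity $t_j = u_{j,1}\cdots u_{j,k}\, n_j^{-1} \in \lAngle S \rAngle$ shows that $S$ normally generates $G$.

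By surjectivity of $\pi$ I can pick $g \in G$ with $\pi_i(g) \neq 1$ for every $i$. The crucial structural property of $S$ is that every $u_{j,i}$ has \emph{single-coordinate support}, meaning $\pi_l(u_{j,i}) = 1$ for $l \neq i$, while every $n_j$ has \emph{empty support}, meaning $\pi_l(n_j) = 1$ for all $l$; both properties are preserved by conjugation in $G$ and by inversion. Now suppose $g = h_1\cdots h_n$ with each $h_a \in \conj_G(S^{\pm 1})$, and fix $i \in \{1,\dots,k\}$. Projecting to $G/N_i$ gives $\pi_i(g) = \pi_i(h_1)\cdots \pi_i(h_n) \neq 1$, so at least one factor has $\pi_i(h_a) \neq 1$, and by the support analysis this $h_a$ must be a conjugate of some $u_{j,i}^{\pm 1}$. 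Such a factor has $\pi_l(h_a) = 1$ for every $l \neq i$, so it cannot play this role for any other coordinate. Therefore the $k$ coordinates require $k$ pairwise distinct factors, giving $n \geq k$ and hence $\|g\|_S \geq k$.

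The main technical obstacle is that $\bigcap_i N_i$ need not be finitely normally generated, so one cannot simply pick good ``single-coordinate'' preimages and then augment by a finite set normally generating the kernel. The workaround is to build the correction terms $n_j$ one per generator of $T$ and incorporate them directly into $S$; because each $n_j$ lies in $\ker\pi$, it has trivial image in every $G/N_l$ and therefore cannot contribute to a non-trivial projection of $g$, so its presence does not disturb the counting argument that bounds $\|g\|_S$ below by $k$.
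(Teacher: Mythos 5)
Your proof is correct. It is essentially the paper's argument unpacked: the paper deduces $k \le \Delta(G/N_1\times\cdots\times G/N_k)\le \Delta(G)$ by citing Lemma~\ref{L:product of simple uniformly bounded}\ref{I:product delta geq sum delta} (word length is additive over the factors of a direct product) and Lemma~\ref{L:finite gen quotient not strongly bounded} (whose proof lifts a normally generating set of a quotient by adjoining kernel correction terms, exactly your $n_j$'s), and your single-coordinate-support counting is precisely the content of those two lemmas inlined into one self-contained construction.
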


\begin{proof}
Since $\Delta(G/N) \geq 1$ for all $N \in \C N$, Lemmas \ref{L:product of simple uniformly bounded}\ref{I:product delta geq sum delta} and \ref{L:finite gen quotient not strongly bounded} show that $k \leq \Delta(G/N_1 \times \dots \times G/N_k) \leq \Delta(G)$.
\end{proof}

The next lemma is straightforward; it follows from \cite[Exercise 404(i), page 167]{MR0498810}, for example.

\begin{lemma}\label{L:finitely many maximal normal subgroups}
Suppose $G=G_1 \times \dots \times G_n$ is a product of simple groups.
Then $G$ has only finitely many normal subgroups.
\end{lemma}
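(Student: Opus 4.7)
The plan is to split $G = A \times N$, where $A$ is the product of those $G_i$ that are abelian---equivalently, cyclic of prime order---and $N = N_1 \times \cdots \times N_m$ is the product of the non-abelian simple $G_i$. Since $A$ is a finite abelian group it has only finitely many subgroups. The strategy is to show that any normal subgroup $H \trianglelefteq G$ is determined by the pair $(H \cap A, J)$ for some subset $J \subseteq \{1, \dots, m\}$, which immediately yields the finiteness claim.

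Fix $H \trianglelefteq G$ and let $J = \{k : N_k \leq H\}$, where each $N_k$ is identified with the corresponding normal subgroup of $G$; set $N' = \prod_{k \in J} N_k \leq H$. The central step is to show $H \subseteq A \times N'$ by the standard commutator trick. Suppose $(a, n) \in H$ with $n_k \neq 1$ for some $k \notin J$. Since $N_k$ is non-abelian, choose $g_k \in N_k$ with $[n_k, g_k] \neq 1$ and let $g \in G$ have $g_k$ in its $k$-th coordinate and $1$ elsewhere. Since $A$ centralises $N$ inside $G$, a direct calculation gives
\[ [(a, n), g] = (1_A, 1, \dots, [n_k, g_k], \dots, 1) \in H, \]
an element whose $G$-normal closure is all of $N_k$ by simplicity of $N_k$, contradicting $k \notin J$.

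Once $H \subseteq A \times N'$ is in hand, the identity $H = (H \cap A) \times N'$ follows quickly: for $(a, n) \in H$ we have $n \in N' \leq H$, hence $(1_A, n) \in H$ and so $(a, 1) = (a, n)(1_A, n)^{-1} \in H$, showing that the $A$-projection of $H$ coincides with $H \cap A$. The number of normal subgroups of $G$ is therefore bounded by the (finite) number of subgroups of $A$ times $2^m$. I do not foresee a serious obstacle; the only point to watch is that the abelian part $A$ commutes with $N$, so the commutator calculations inside $N$ are insensitive to the $A$-coordinate of $(a,n)$, which is why no Goursat-style gluing data can sneak in.
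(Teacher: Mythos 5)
Your proof is correct. The paper does not actually write out an argument for this lemma---it merely cites it as an exercise in the literature---and your commutator argument (a nontrivial coordinate $n_k$ of an element of $H$ in a non-abelian simple factor forces $N_k\leq H$, after which $H=(H\cap A)\times\prod_{k\in J}N_k$) is exactly the standard proof of that exercise; the only points needing care, namely that an abelian simple group is finite and that a non-abelian simple group has trivial centre so that $g_k$ exists, are both handled.
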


\begin{proof}[Proof of Theorem \ref{T:simple_quotient}]
Let $\C M(H)$ denote the collection of the maximal normal subgroups of a group $H$.
Let $\B M(G)$ denote the set of all the splitting collections of $G$ consisting of maximal normal subgroups.
Clearly $\B M(G)$ is not empty.
Suppose $\C N \in \B M(G)$ has size $k\in \B N$.
Let $K$ be the kernel of $G \to G/\C N$.
If $K$ is contained in every $M \in \C M(G)$ then the assignment $M \mapsto M/K$ gives rise to a bijection $\C M(G) \to \C M(G/K) \approx \C M(G/\C N)$ which contradicts Lemma \ref{L:finitely many maximal normal subgroups}.
So there exists $M \in \C M(G)$ such that $K \nsubseteq M$.
In particular $M \notin \C  N$.
Set $\C N'=\C N \cup \{ M\}$.
Then $G \to G/ \C N' = G/\C N \times G/M$ is surjective because $G \to G/\C N$ is surjective and $K \to G/M$ is surjective since it is not trivial and $G/M$ is simple.
Therefore $\C N' \in \B M(G)$.
Thus, $\B M(G)$ contains elements of arbitrary size $k \in \B N$ and Lemma \ref{L:splitting new} completes the proof.
\end{proof}


Here is an application to linear groups.

\begin{theorem}\label{T:strong-approximation}
Let $G\subset \OP{GL}(n,\B C)$ be a connected, simply connected, $\B
Q$-simple and absolutely simple linear algebraic group defined over $\B Q$. Let 
$\Gamma\subset G(\B Q)$ be a finitely generated Zariski dense subgroup.  Then
$\Gamma$ is not uniformly bounded.
\end{theorem}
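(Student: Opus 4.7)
The plan is to apply Theorem~\ref{T:simple_quotient}: I will exhibit infinitely many pairwise distinct maximal normal subgroups of $\Gamma$, obtained as kernels of congruence quotients.

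First, I would observe that $\Gamma$ is finitely normally generated since it is finitely generated. As $\Gamma \subset G(\B Q) \subset \OP{GL}(n,\B Q)$ is finitely generated, the entries of a finite generating set and of the inverses of its elements involve only finitely many primes in their denominators, so $\Gamma \subset G(\B Z[1/N])$ for some positive integer $N$. For every prime $p \nmid N$, reduction modulo $p$ then induces a well-defined homomorphism $\rho_p \colon \Gamma \to G(\B F_p)$.

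The main step is to invoke the strong approximation theorem in the version due to Matthews--Vaserstein--Weisfeiler and Nori which applies to finitely generated Zariski dense subgroups of $G(\B Q)$. Under our standing hypotheses on $G$---connected, simply connected, and absolutely simple over $\B Q$---together with the Zariski density of $\Gamma$, this theorem guarantees that $\rho_p$ is surjective onto $G(\B F_p)$ for all but finitely many primes $p$. For $p$ sufficiently large the finite group $S_p \defeq G(\B F_p)/Z(G(\B F_p))$ is a finite simple group of Lie type whose order grows like a power of $p$, so $S_p \not\cong S_q$ for $p \neq q$ large enough. Composing $\rho_p$ with the projection modulo the centre therefore yields surjections $\pi_p \colon \Gamma \twoheadrightarrow S_p$ onto pairwise non-isomorphic finite simple groups, and the kernels $M_p \defeq \ker \pi_p$ constitute an infinite family of pairwise distinct maximal normal subgroups of $\Gamma$.

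Theorem~\ref{T:simple_quotient} will then immediately imply that $\Gamma$ is not uniformly bounded. I expect the main subtlety to be invoking the correct form of strong approximation: the hypothesis that $G$ is simply connected is exactly what is needed to ensure literal surjectivity of $\rho_p$ onto $G(\B F_p)$ (rather than onto a finite-index subgroup) for almost all $p$, and Nori's refinement is required to handle the fact that $\Gamma$ is only assumed Zariski dense rather than arithmetic. Once this is in place the remaining assertions about the structure of $S_p$ are standard facts from the theory of finite groups of Lie type.
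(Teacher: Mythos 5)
Your proposal is correct and follows essentially the same route as the paper: strong approximation gives surjections onto $G(\B F_p)$ for almost all $p$, whose central quotients are pairwise non-isomorphic finite simple groups, so the kernels form infinitely many distinct maximal normal subgroups and Theorem~\ref{T:simple_quotient} applies. The only cosmetic difference is the choice of reference for strong approximation and for the quasisimplicity of $G(\B F_p)$.
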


\begin{proof}
It follows from the Strong Approximation Theorem \cite[Corollary 16.4.3]{MR1978431}
that $G(\B F_p)$ is a quotient of $\Gamma$ for almost all primes $p$. 
Moreover, our hypotheses imply that $G(\B F_p)$ is quasisimple \cite[Proposition 6.1]{0803.4165v5}, so
its quotient by its centre is a finite simple group.  
This yields infinitely many pairwise non-isomorphic simple quotients of $G$
\cite[Proposition 16.4.2]{MR1978431}.
The kernels of the quotient maps must be pairwise distinct maximal normal subgroups of $G$.
Theorem  \ref{T:simple_quotient} applies.
\end{proof}

\begin{remark}
 Theorem \ref{T:strong-approximation} fails if we drop the requirement that $\Gamma$ is finitely generated.  For instance, if $G= \OP{SL}(n)$ and $\Gamma= \OP{SL}(n, \B Q)= G(\B Q)$ then $\Gamma$ is uniformly bounded by Theorem~\ref{T:slnr main theorem:2}.
\end{remark}

\section{Boundedness properties of $\OP{SL}(n,\F R)$}\label{S:dslnz}

\noindent

\noindent
{\bf Assumptions and notation.}
In this section, unless otherwise stated, $\F R$ denotes a principal ideal domain (p.i.d).
The ideal generated by $a \in \F R$ is denoted $(a)_{\F R}$.
The {\em greatest common divisor} of $X \subseteq \F R$, denoted by $\gcd(X)$, is a generator of the ideal  $\sum_{a \in X} (a)_{\F R}$.

The group of $n \times n$ matrices with determinant $1$ over a commutative ring $R$ with 1 is denoted $\OP{SL}(n,R)$.
If $A,B\in \OP{SL}(n,R)$ then we write $A\sim B$ if $A$ is conjugate to $B$.
Let $e_{i,j}$ be the $n \times n$ matrix over $R$ whose only non-zero entry is $1$ in the $i$th row and the $j$th column where $i \neq j$.
For any $r \in R$ and $i \neq j$, the \emph{elementary matrix} $E_{i,j}(r)$ is $I+re_{i,j}$.
The set of all elementary matrices is denoted $\OP{EL}(n,R)$; it is clearly contained in $\OP{SL}(n,R)$.

We now state the main three results of this section.

\begin{theorem}\label{T:slnr main theorem}
Let $\F R$ be a p.i.d with infinitely many maximal ideals.
Let $n \geq 3$. 
Assume that $\OP{SL}(n,\F R)$ is normally generated by $\OP{EL}(n,\F R)$ and that $\| \OP{SL}(n,\F R)\|_{\OP{EL}(n,\F R)} \leq C_n$.
Then for any $k \geq 1$,
\[
k \leq \Delta_k(\OP{SL}(n,\F R)) \leq (4n+ 4)  C_n k.
\]
In particular $\OP{SL}(n,\F R)$ is strongly bounded but not uniformly bounded.
\end{theorem}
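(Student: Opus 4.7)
The plan is to prove the two inequalities separately. For the lower bound, I will exhibit an explicit normal generating set of size $k$ with word norm at least $k$, built via the Chinese Remainder Theorem applied to $k$ distinct maximal ideals. For the upper bound, I will reduce, using the Carter--Keller-type hypothesis, to bounding $\|e\|_S$ for elementary matrices $e$ and arbitrary $S\in \Gamma_k(G)$, where $G=\OP{SL}(n,\F R)$.

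For the lower bound, pick $k$ distinct maximal ideals $\F m_1,\dots,\F m_k$ with generators $m_1,\dots,m_k$, and set $\alpha_i=\prod_{j\neq i}m_j$. By pairwise coprimality of the $\F m_j$, we have $\alpha_i\notin\F m_i$, $\alpha_i\in\F m_j$ for $j\neq i$, and $\gcd(\alpha_1,\dots,\alpha_k)=1$. Define $\tilde s_i=E_{1,2}(\alpha_i)$ and $\tilde S=\{\tilde s_1,\dots,\tilde s_k\}$. To see $\tilde S$ normally generates $G$, note that for each $r\in\F R$ the commutator $[E_{2,3}(r),\tilde s_i]=E_{1,3}(-r\alpha_i)$ lies in $\lAngle\tilde S\rAngle$; since the additive subgroup $\{x:E_{1,3}(x)\in\lAngle\tilde S\rAngle\}$ contains the ideal $(\alpha_i)$ for every $i$, Bezout yields $E_{1,3}(\F R)\subseteq\lAngle\tilde S\rAngle$, and iterating with the identities $[E_{p,q}(a),E_{q,r}(b)]=E_{p,r}(ab)$ (available because $n\geq 3$) gives $\OP{EL}(n,\F R)\subseteq\lAngle\tilde S\rAngle$, whence $\lAngle\tilde S\rAngle=G$ by hypothesis. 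Under the surjection $\pi\colon G\twoheadrightarrow H:=\prod_{i=1}^k\OP{PSL}(n,\F R/\F m_i)$, each $\pi(\tilde s_i)$ is nontrivial only in the $i$-th coordinate, so any product of conjugates of $\pi(\tilde S)^{\pm 1}$ equal to a target $h=(h_1,\dots,h_k)$ with every $h_i\neq 1$ needs at least one factor per coordinate. Lifting such an $h$ to $\tilde h\in G$ gives $\|\tilde h\|_{\tilde S}\geq\|h\|_{\pi(\tilde S)}\geq k$, hence $\Delta_k(G)\geq k$.

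For the upper bound, let $S\in\Gamma_k(G)$ and $g\in G$. Since $\|heh^{-1}\|_S=\|e\|_S$, the hypothesis gives
\[
\|g\|_S\;\leq\;\|g\|_{\OP{EL}(n,\F R)}\cdot\max_{e\in\OP{EL}(n,\F R)}\|e\|_S\;\leq\;C_n\cdot\max_e\|e\|_S,
\]
so it suffices to prove $\max_{e\in\OP{EL}(n,\F R)}\|e\|_S\leq(4n+4)k$. The plan here is to use, for each non-central $s\in S$, the bound $\|[s,h]\|_s\leq 2$ together with the identities $[E_{p,q}(a),E_{q,r}(b)]=E_{p,r}(ab)$: a bounded number of successive commutators (at most $n+1$, each costing four conjugates of $s$) extract an elementary one-parameter subgroup contained in $\lAngle s\rAngle$, and the contributions from the $k$ elements of $S$ combine to cover all of $\OP{EL}(n,\F R)$ within $B_S((4n+4)k)$.

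The main obstacle is the central estimate for the upper bound: showing the $S$-norm of every elementary matrix is at most $(4n+4)k$, independently of the target parameter $r\in\F R$. This requires a delicate analysis of how a generic non-central matrix $s\in\OP{SL}(n,\F R)$ interacts with elementary matrices via iterated conjugation, ensuring the number of conjugates used is uniform in $r$. The lower bound is cleaner; its main technical point is the verification that $\tilde S$ normally generates $G$, which is precisely where Bezout's identity (applied to the coprime $\alpha_i$) and the commutator identities available only for $n\geq 3$ both come into play.
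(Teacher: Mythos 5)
Your lower bound is complete and is essentially the paper's argument in different packaging: the paper takes $A_i=E_{1,n}(r_i)$ with $r_i=\prod_{j\neq i}p_j$ and observes that each factor in a product of $k-1$ conjugates of the $A_i^{\pm1}$ is trivial modulo all but one of the $\F p_j$, so by pigeonhole the product is trivial modulo some $\F p_i$ and cannot equal $E_{1,n}(1)$; your reformulation via the quotient $\prod_i\OP{PSL}(n,\F R/\F m_i)$ is the same count, and your direct Steinberg-relation plus Bezout verification that $\tilde S$ normally generates is fine.

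The upper bound, however, has a genuine gap, and it is exactly the step you yourself flag as ``the main obstacle.'' The reduction to showing $\OP{EL}(n,\F R)\subseteq B_S(4(n+1)k)$ is correct, but the assertion that a bounded number of successive commutators ``extract an elementary one-parameter subgroup contained in $\lAngle s\rAngle$'' whose ``contributions from the $k$ elements of $S$ combine to cover all of $\OP{EL}(n,\F R)$'' is precisely what must be proved, and the difficulty is not merely uniformity in the parameter $r$. From a single non-central $s$ one can only hope to place $E_{1,n}(x)$ in $B_s(4(n+1))$ for $x$ ranging over some ideal $J(s)$, and the crux is to choose these ideals so that $\sum_{s\in S}J(s)=\F R$ whenever $S$ normally generates. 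The paper achieves this via the double commutator lemma (Lemma \ref{L:magic double commutators}): after conjugating $s$ to upper Hessenberg form, the matrices $[[s,E_{i,j}(1)],E_{k,\ell}(x)]$ are computed explicitly and shown (Lemma \ref{L:conjugate to elementary}) to be conjugate to $E_{1,n}(xt)$, where $t$ is a gcd of certain entries of $s$ and $s^{-1}$; a careful choice of $n+1$ such double commutators yields an ideal $I(s)\subseteq\C E(s,4n+4)$ with the property that any maximal ideal containing $I(s)$ forces $s$ to be scalar modulo it (Proposition \ref{P:prime ideals of A in E}). Only then does normal generation of $S$ --- equivalently $\bigcap_{s\in S}\Pi(s)=\emptyset$ (Proposition \ref{P:S conjugation generates:revised}) --- imply that $\sum_{s\in S}I(s)$ lies in no maximal ideal, hence equals $\F R$. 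Without an argument of this kind, linking the ideals you extract to the obstruction to normal generation, your sketch does not close.
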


The assumptions in Theorem~\ref{T:slnr main theorem} that $\OP{SL}(n,\F R)$ is normally generated by $\OP{EL}(n,\F R)$  and that $\| \OP{SL}(n,\F R)\|_{\OP{EL}(n,\F R)}<\infty$ are not automatically satisfied for general p.i.d.'s (see Remark \ref{R:E does not generate SLn} below).

Application of Theorem~\ref{T:slnr main theorem} yields the following result.

\begin{corollary}\label{C:Carter Keller implications}
Let $\C O$ be the ring of integers in a number field whose class number is one and let $n \geq 3$.
Then $\OP{SL}(n,\C O)$ is normally generated by any elementary matrix $E_{i,j}(1)$, it is strongly bounded but not uniformly bounded.
In fact,
\[
k \leq \Delta_k(\OP{SL}(n,\C O)) \leq (4n+51) (4n+ 4) k.
\]
\end{corollary}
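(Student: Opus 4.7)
The plan is to apply Theorem~\ref{T:slnr main theorem} to $\F R = \C O$. Its three hypotheses must be verified: that $\C O$ is a principal ideal domain, that $\C O$ has infinitely many maximal ideals, and that $\OP{SL}(n,\C O)$ is normally generated by $\OP{EL}(n,\C O)$ with elementary word length uniformly bounded by $C_n := 4n+51$.

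The first two hypotheses are standard: a Dedekind domain with trivial class group is a p.i.d., and the ring of integers of a number field has infinitely many nonzero primes (each rational prime has a prime of $\C O$ above it, so distinct rational primes produce distinct maximal ideals of $\C O$; alternatively, $\C O/\F p$ is always finite while $\C O$ is not). The third is the Carter--Keller theorem on bounded elementary generation of $\OP{SL}(n,\C O)$ for $n \geq 3$, specialised to the class-number-one case. With these three inputs, Theorem~\ref{T:slnr main theorem} yields both bounds
\[
k \leq \Delta_k(\OP{SL}(n,\C O)) \leq (4n+4)(4n+51)k
\]
simultaneously.

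For the subsidiary statement that a single $E_{i,j}(1)$ normally generates $\OP{SL}(n,\C O)$, I would invoke the Steinberg commutator identity: since $n \geq 3$, there exists $k \notin \{i,j\}$, and a short matrix computation gives
\[
[E_{k,i}(r), E_{i,j}(1)] = E_{k,j}(r) \qquad (r \in \C O).
\]
This exhibits $E_{k,j}(r)$ as a product of a $G$-conjugate of $E_{i,j}(1)$ and $E_{i,j}(1)^{-1}$; conjugating further by signed permutation matrices in $\OP{SL}(n,\C O)$ places every elementary matrix $E_{i',j'}(r)$ in the normal closure of $E_{i,j}(1)$, with word norm at most $2$ with respect to $\{E_{i,j}(1)\}$. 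Combined with the Carter--Keller generation of $\OP{SL}(n,\C O)$ by $\OP{EL}(n,\C O)$, this normal closure is all of $\OP{SL}(n,\C O)$.

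The main obstacle is pinning down the Carter--Keller constant precisely as $4n+51$ in the class-number-one case; the p.i.d. verification, infinite-primes observation, and Steinberg calculation are all routine.
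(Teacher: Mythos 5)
Your overall strategy coincides with the paper's: verify the three hypotheses of Theorem~\ref{T:slnr main theorem} for $\F R=\C O$ and read off both inequalities at once. The p.i.d.\ and infinitely-many-maximal-ideals checks are correct, and your Steinberg-commutator argument for normal generation by a single $E_{i,j}(1)$ is essentially the paper's Lemma~\ref{L:conjugate elementary matrices}\,(\ref{L:conjugate elementary matrices:2}), so that part is complete.

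The genuine gap is the constant $C_n=4n+51$, which you yourself flag as ``the main obstacle'' and do not derive. You cannot simply read it off from Carter--Keller: the paper invokes \cite{MR704220} only for $n=3$, where it yields $\|\SL(3,\C O)\|_{\OP{EL}(3,\C O)}\leq 63$, and then obtains $C_n\leq 63+4(n-3)=4n+51$ for all $n\geq 3$ from Proposition~\ref{P:bound SLnR after Carter-Keller}, an explicit row-reduction induction showing that passing from $\SL(n-1,\F R)$ to $\SL(n,\F R)$ costs at most $4$ additional elementary factors. Without this stabilisation step (or an equivalent substitute) the specific upper bound $(4n+51)(4n+4)k$ in the statement is not justified; the rest of your argument would only give strong-but-not-uniform boundedness with an unspecified constant depending on whatever bound one extracts from Carter--Keller for general $n$.
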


The situation is quite different when $\F R$ has only finitely many maximal ideals.

\begin{theorem}\label{T:slnr main theorem:2}
Let $\F R$ be a p.i.d with only $d<\infty$ maximal ideals.
Let $n \geq 3$.
Then $\OP{SL}(n,\F R)$ is normally generated by $\OP{EL}(n,\F R)$ and for any $k \geq 1$,
\[
\Delta_k(\OP{SL}(n,\F R)) \leq 12(n-1) \cdot \min\{ d, k(n+1)\}.
\]
In particular $\Delta(\OP{SL}(n,\F R)) \leq 12d(n-1)$, thus $\OP{SL}(n,\F R)$ is uniformly bounded. 
\end{theorem}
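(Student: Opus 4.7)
The plan is to establish two independent upper bounds on $\Delta_k(\OP{SL}(n,\F R))$ --- one linear in $k$ (with slope $12(n-1)(n+1)$) and one independent of $k$ (equal to $12(n-1)d$) --- and then to take their minimum.

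For the $k$-linear bound, I would use that a p.i.d.\ with finitely many maximal ideals is semi-local, hence has Bass stable range $1$.  A Gauss-elimination argument adapted to the semi-local setting then yields
\[
\| \OP{SL}(n,\F R)\|_{\OP{EL}(n,\F R)} \leq 3(n-1),
\]
i.e.\ every matrix in $\OP{SL}(n,\F R)$ is a product of at most $3(n-1)$ elementary matrices.  Next I would observe that the upper bound $\Delta_k \leq (4n+4)C_n k$ in Theorem~\ref{T:slnr main theorem} does not actually require the ``infinitely many maximal ideals'' hypothesis --- that hypothesis is used only for the matching \emph{lower} bound $k \leq \Delta_k$.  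Substituting $C_n = 3(n-1)$ gives $\Delta_k(\OP{SL}(n,\F R)) \leq 12(n-1)(n+1) k$.

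For the $k$-independent bound, write $\F p_1,\dots,\F p_d$ for the maximal ideals of $\F R$, $k_i = \F R/\F p_i$ for the residue fields and $J = \F p_1 \cap \dots \cap \F p_d$ for the Jacobson radical.  By the Chinese remainder theorem,
\[
\OP{SL}(n,\F R)/\Gamma(J) \;\cong\; \OP{SL}(n,\F R/J) \;\cong\; \prod_{i=1}^d \OP{SL}(n,k_i),
\]
where $\Gamma(J)$ is the principal congruence subgroup.  Each factor satisfies $\Delta(\OP{PSL}(n,k_i)) \leq 12(n-1)$ by the proposition preceding Proposition~\ref{P:slnzl ccs}, and the same strategy as in Proposition~\ref{P:slnzl ccs} --- where $\Delta(\OP{SL}(n,\B Z/\ell))$ is shown to be at most $12k(n-1)$ for $\ell$ with $k$ prime factors --- adapts to give a bound of $12d(n-1)$ on the diameter of the product.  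The congruence kernel $\Gamma(J)$ is normally generated by elementary matrices with entries in $J$, each a short commutator of two elementary matrices; using the elementary-width bound from the previous paragraph together with Lemma~\ref{L:BXn properties}~\ref{L:BXn:assoc}, this contribution is absorbed into the factor $12(n-1)d$.

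The main obstacle will be executing the second bound with the sharp constant $12(n-1)d$ rather than a larger multiple.  The generic product bound of Lemma~\ref{L:product of simple uniformly bounded}~\ref{I:u bounded for product of simple} loses a factor of $2$ and the centre-extension bound of Lemma~\ref{L:finite_extn new}~\ref{I:finite_extn new:2} loses a factor of $2n-1$, so to reach exactly $12(n-1)d$ one must replace them by bespoke $\OP{SL}_n$ commutator calculations that exploit how a non-scalar element of $\OP{SL}(n,k_i)$ normally generates modulo the centre in a tightly controlled way.  Similarly, the congruence-kernel lifting step must be done with word-length bounds that are uniform in the normal generating set $S$; this requires the Chinese-remainder lifting of elementary matrices to be carried out with explicit length estimates drawn from the first paragraph.
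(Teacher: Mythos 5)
Your first bound is correct and is in fact exactly half of the paper's argument: the upper bound $\Delta_k \leq (4n+4)C_nk$ in Theorem \ref{T:slnr main theorem} uses only Proposition \ref{P:S conjugation generates:revised}, Remark \ref{R:easy facts on E} and Lemmas \ref{L:conjugate elementary matrices} and \ref{L:BXn properties}, none of which needs infinitely many maximal ideals, and Proposition \ref{P:slnr bounded r local} supplies $C_n=3(n-1)$, giving $\Delta_k(\OP{SL}(n,\F R)) \leq 12(n-1)(n+1)k$.

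The $k$-independent bound is where there is a genuine gap, and the route you sketch does not close it. First, the constants: you concede that Lemma \ref{L:product of simple uniformly bounded}\ref{I:u bounded for product of simple} loses a factor of $2$ and Lemma \ref{L:finite_extn new}\ref{I:finite_extn new:2} a factor of $2n-1$, and that ``bespoke commutator calculations'' would be needed to recover $12(n-1)d$; those calculations are the proof, and they are not supplied. Second, and more fundamentally, the congruence-kernel step cannot be ``absorbed'' as described: the width bound $\|\OP{SL}(n,\F R)\|_{\OP{EL}(n,\F R)}\leq 3(n-1)$ controls the word norm with respect to $\OP{EL}(n,\F R)$, not with respect to an arbitrary normally generating set $S$. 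To place $\Gamma(J)$ inside $B_S(C)$ with $C$ uniform in $S$ you must show $J \subseteq \C E(S,C')$, i.e.\ manufacture elementary matrices with prescribed entries out of conjugates of $S$ --- which is precisely the hard content of Proposition \ref{P:prime ideals of A in E}, so the Chinese-remainder detour buys nothing. (Note also that the residue fields $\F R/\F p_i$ need not be finite, so the finite-field proposition you invoke does not apply, and bounding $\Delta(\OP{SL}(n,k))$ for a general field $k$ is the case $d=1$ of the very theorem being proved.) The missing idea is much simpler and purely ideal-theoretic: Proposition \ref{P:S conjugation generates:revised} gives ideals $J_1,\dots,J_{k(n+1)} \subseteq \C E(S,4)$ with $J_1+\dots+J_{k(n+1)}=\F R$; for each of the $d$ maximal ideals $\F p$ choose one $J_i$ with $J_i \nsubseteq \F p$. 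The sum of the at most $\min\{d,k(n+1)\}$ chosen ideals lies in no maximal ideal, hence equals $\F R$, so $\F R=\C E(S,4\min\{d,k(n+1)\})$ by Remark \ref{R:easy facts on E}. Combining this with Lemma \ref{L:conjugate elementary matrices}(\ref{L:conjugate elementary matrices:1}), the width bound $3(n-1)$ and Lemma \ref{L:BXn properties}\ref{L:BXn:assoc} yields $12(n-1)\min\{d,k(n+1)\}$ in one stroke, with no products, central extensions or congruence subgroups.
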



In the remainder of this section we will prove these results and provide some examples.

\begin{lemma}\label{L:orbits of sln}
Let $n \geq 2$.
Let ${\bf a}=(a_1,\ldots,a_n) \in \F R^n$ be a (row) vector.
Set $t=\gcd(a_1,\dots,a_n)$.
Then there exists $A \in \SL(n,\F R)$ such that ${\bf a} \cdot A = (t,0,\dots,0)$.
\end{lemma}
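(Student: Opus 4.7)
The plan is to prove this by induction on $n$, using Bézout's identity (available because $\F R$ is a p.i.d.) to handle the two-variable case, and then reducing the general case to it one coordinate at a time.

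\textbf{Base case $n=2$.} Write $t = \gcd(a_1,a_2)$. If $t=0$ then ${\bf a} = 0$ and $A = I$ works. Otherwise, $a_1 = t b_1$ and $a_2 = t b_2$ for some $b_1,b_2 \in \F R$ with $\gcd(b_1,b_2)=1$, and by Bézout there exist $x,y \in \F R$ with $xa_1+ya_2 = t$, equivalently $xb_1 + yb_2 = 1$. I would then take
\[
A = \begin{pmatrix} x & -b_2 \\ y & b_1 \end{pmatrix},
\]
which has $\det A = xb_1 + yb_2 = 1$ and satisfies $(a_1,a_2)A = (xa_1+ya_2,\, -b_2 a_1 + b_1 a_2) = (t,0)$.

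\textbf{Inductive step.} Assume the result for vectors of length $n-1$. Given ${\bf a} = (a_1,\dots,a_n)$, set $t' = \gcd(a_1,\dots,a_{n-1})$ and apply the induction hypothesis to $(a_1,\dots,a_{n-1}) \in \F R^{n-1}$ to obtain $A' \in \SL(n-1,\F R)$ with $(a_1,\dots,a_{n-1}) A' = (t',0,\dots,0)$. Embed $A'$ as the top-left block of the matrix
\[
\widetilde{A}' = \begin{pmatrix} A' & 0 \\ 0 & 1 \end{pmatrix} \in \SL(n,\F R),
\]
so that ${\bf a}\widetilde{A}' = (t',0,\dots,0,a_n)$. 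Now apply the base case to the pair $(t',a_n)$, noting that $\gcd(t',a_n) = \gcd(a_1,\dots,a_n) = t$, to obtain $A'' = \left(\begin{smallmatrix} x & u \\ y & v \end{smallmatrix}\right) \in \SL(2,\F R)$ with $(t',a_n)A'' = (t,0)$. Embed $A''$ into $\SL(n,\F R)$ by placing its four entries at positions $(1,1),(1,n),(n,1),(n,n)$ and putting $1$'s on the remaining diagonal:
\[
\widetilde{A}'' = \begin{pmatrix} x & 0 & \cdots & 0 & u \\ 0 & 1 & & & 0 \\ \vdots & & \ddots & & \vdots \\ 0 & & & 1 & 0 \\ y & 0 & \cdots & 0 & v \end{pmatrix}.
\]
This still has determinant $\det A'' = 1$, and $(t',0,\dots,0,a_n)\widetilde{A}'' = (t,0,\dots,0)$. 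Taking $A = \widetilde{A}'\widetilde{A}'' \in \SL(n,\F R)$ gives ${\bf a}A = (t,0,\dots,0)$, completing the induction.

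There is no serious obstacle here: the argument is the standard PID reduction, and both the Bézout identity and the block embedding into $\SL(n,\F R)$ are routine. The only point needing minor care is the degenerate case $t=0$ (forcing ${\bf a}=0$), which is handled trivially by $A=I$.
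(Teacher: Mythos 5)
Your proof is correct, but it takes a different route from the paper. The paper factors ${\bf a}=t{\bf b}$ with ${\bf b}$ unimodular, invokes a row-completion theorem for principal ideal domains (Corollary II.1 of Newman's \emph{Integral Matrices}) to produce $B\in \SL(n,\F R)$ with first row ${\bf b}$, and sets $A=B^{-1}$; the entire construction is outsourced to the citation. You instead give a self-contained induction on $n$, clearing one coordinate at a time with explicit $2\times 2$ B\'ezout matrices embedded in the $(1,n)$-plane. Your argument is longer but fully constructive and needs nothing beyond B\'ezout's identity in a p.i.d.; the paper's is a two-line reduction at the cost of an external reference (which is itself typically proved by essentially your induction). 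Two minor points of hygiene, neither a gap: since $\gcd$ in this paper means a chosen generator of the ideal $\sum_i (a_i)_{\F R}$, your B\'ezout step should be read as solving $xa_1+ya_2=t$ for the \emph{given} generator $t$ (which is possible precisely because $t$ lies in that ideal), and in the inductive step the identity of ideals $(t')_{\F R}+(a_n)_{\F R}=(a_1)_{\F R}+\dots+(a_n)_{\F R}$ is what lets you take $t$ itself as the output of the base case applied to $(t',a_n)$.
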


\begin{proof} 
Clearly, ${\bf a}=t{\bf b}$ where $\gcd({\bf b})=1$.
By \cite[Corollary II.1]{MR0340283} there exists $B\in \OP{SL}(n,\F R)$ whose first row is ${\bf b}$. 
Thus, $(t,0,0,\dots,0) \cdot B = t{\bf b}={\bf a}$.
Set $A=B^{-1}$. 
\end{proof}

Recall that $e_{i,j} e_{k,\ell}=0$ if $j \neq k$ and  $e_{i,j} e_{k,\ell}=e_{i,\ell}$ if $j=k$.
Suppose that $1 \leq i \neq \ell \leq n$.
For any $1 \leq j,k \leq n$ such that $i \neq j$ and $k \neq \ell$ we obtain the Steinberg relations
\[
[E_{i,j}(x),E_{k,\ell}(y)] = \left\{
\begin{array}{ll}
I & \text{if $j \neq k$} \\
E_{i,\ell}(xy) & \text{if $j=k$.}
\end{array}\right.
\]
Given $i \neq j$ set
\[
\sigma_{i,j} = e_{i,j}-e_{j,i}+\sum_{k \neq i,j} e_{k,k}.
\]
Notice that $\sigma_{i,j} \in \OP{SL}(n,\F R)$ and that $\sigma_{i,j}^{-1}=\sigma_{j,i}$.

\begin{lemma}\label{L:conjugate elementary matrices}
Assume that $n \geq 3$.
\begin{enumerate}
\item 
For any fixed $x \in \F R$, all the elementary matrices $E_{i,j}(x)$ are conjugate in $\OP{SL}(n,\F R)$.
\label{L:conjugate elementary matrices:1}

\item
Set $A=E_{1,n}(1)$.
Then $B_A(2) \supseteq \OP{EL}(n,\F R)$: that is, every elementary matrix is the product of at most $2$ conjugates of $A^{\pm 1}$. 
\label{L:conjugate elementary matrices:2}
\end{enumerate}
\end{lemma}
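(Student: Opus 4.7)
The plan is to prove part \ref{L:conjugate elementary matrices:1} by exhibiting explicit conjugating elements in $\SL(n,\F R)$, and then to deduce part \ref{L:conjugate elementary matrices:2} as a quick consequence of the Steinberg commutator relation recorded just above the lemma.

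For \ref{L:conjugate elementary matrices:1} the key computation I would carry out is that for any $k\notin\{i,j\}$ one has
\[
\sigma_{i,k}\,E_{i,j}(x)\,\sigma_{i,k}^{-1}=E_{k,j}(-x),\qquad \sigma_{j,k}\,E_{i,j}(x)\,\sigma_{j,k}^{-1}=E_{i,k}(-x),
\]
both of which reduce to short direct matrix calculations using the explicit formula for $\sigma_{i,j}$ (essentially $\sigma_{i,k}e_{i,j}\sigma_{i,k}^{-1}=-e_{k,j}$, and similarly for $\sigma_{j,k}$). Because $n\geq 3$, any two ordered pairs $(i,j)$ and $(i',j')$ with $i\neq j$, $i'\neq j'$ can be connected by a sequence of such moves, changing the row or column index one at a time through a ``spare'' index $k$; so $E_{i,j}(x)$ is conjugate to $E_{i',j'}(\pm x)$. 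I would then remove the residual sign by conjugating with a diagonal matrix $D=\OP{diag}(\epsilon_1,\ldots,\epsilon_n)\in\SL(n,\F R)$ with each $\epsilon_\ell\in\{\pm 1\}$ and $\prod_\ell\epsilon_\ell=1$: such a $D$ transforms $E_{a,b}(y)$ into $E_{a,b}(\epsilon_a\epsilon_b^{-1}y)$, and the hypothesis $n\geq 3$ is precisely what lets me impose $\epsilon_a\epsilon_b=-1$ while keeping $\prod_\ell\epsilon_\ell=+1$ (place a third $-1$ on some index distinct from $a,b$).

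For \ref{L:conjugate elementary matrices:2} I would fix $i\neq j$ and pick $k\in\{1,\ldots,n\}\setminus\{i,j\}$, which exists since $n\geq 3$. The Steinberg relation displayed just above the lemma gives $[E_{i,k}(x),E_{k,j}(1)]=E_{i,j}(x)$. Expanding the commutator yields
\[
E_{i,j}(x)=\bigl(E_{i,k}(x)\,E_{k,j}(1)\,E_{i,k}(x)^{-1}\bigr)\cdot E_{k,j}(-1).
\]
The first factor is a conjugate of $E_{k,j}(1)$ and the second factor is $E_{k,j}(-1)$; by part \ref{L:conjugate elementary matrices:1} these are conjugates of $A=E_{1,n}(1)$ and of $A^{-1}$ respectively. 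Hence $E_{i,j}(x)\in B_A(2)$, and since $(i,j)$ and $x$ were arbitrary we conclude $\OP{EL}(n,\F R)\subseteq B_A(2)$.

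The only real obstacle is the sign bookkeeping in part \ref{L:conjugate elementary matrices:1}; once that is under control part \ref{L:conjugate elementary matrices:2} is essentially a one-line consequence of the Steinberg identity, with no additional ideas required.
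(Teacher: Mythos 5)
Your proof is correct and follows essentially the same route as the paper: part (1) by conjugating with the matrices $\sigma_{i,j}$ (the paper picks the conjugator $\sigma_{k,j}$, which gives $\sigma_{k,j}e_{i,j}\sigma_{k,j}^{-1}=e_{i,k}$ with no sign, so it never needs your extra diagonal-matrix step to fix signs), and part (2) by expanding a Steinberg commutator as a product of two conjugates of $A^{\pm1}$. The only difference is cosmetic bookkeeping; both arguments are sound.
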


\begin{proof}
Consider $i \neq j$.
To prove (\ref{L:conjugate elementary matrices:1}) it suffices to show that $E_{i,j}(x) \sim E_{1,n}(x)$.  
Choose $k \neq i,j$ (this is possible since $n \geq 3$).
An easy calculation shows that $\sigma_{k,j}e_{i,j} \sigma_{k,j}^{-1}=e_{i,k}$ and therefore $E_{i,k}(x)=\sigma_{k,j}E_{i,j}(x) \sigma_{k,j}^{-1}$.
The rest is straightforward.
%
%
%
For part (\ref{L:conjugate elementary matrices:2}) use Steinberg's relation $E_{1,2}(x)=[E_{1,n}(1), E_{n,2}(x)]\in B_A(2)$ and part (\ref{L:conjugate elementary matrices:1}).
\end{proof}

\begin{lemma}\label{L:conjugate to elementary}
Fix $n \geq 2$, $2 \leq k \leq n$ and $a_1,\dots,a_{k-1} \in \F R$. 
Consider the upper triangular matrix 
\[
A= I + \sum_{i=1}^{k-1} a_i e_{i,k} =
\begin{pmatrix}
1 & 0 & \cdots & 0 & a_1    & 0 &\cdots & 0\\
  & 1 &        &   & a_2    &   &       &  \\
  &   & \ddots &   & \vdots &   &       &  \\
  &   &        & 1 & a_{k-1}&   &       &  \\
  &   &        &   & 1      &   &       &  \\
  &   &        &   &        & 1 &       &  \\
  &   &        &   &        &   &\ddots &  \\
  &   &        &   &        &   &       & 1
\end{pmatrix}.
\]
Then $A$ is conjugate to $E_{1,n}(t)$ where $t=\gcd(a_1,\dots, a_{k-1})$.
\end{lemma}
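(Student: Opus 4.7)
The plan is to express $A = I + \sum_{i=1}^{k-1} a_i e_{i,k}$ as the rank-one perturbation $A = I + \mathbf{a}\, e_k^T$, where $\mathbf{a} = (a_1,\ldots,a_{k-1},0,\ldots,0)^T \in \F R^n$ is a column vector whose $k$-th entry is zero and $e_k$ denotes the $k$-th standard basis column vector of $\F R^n$. If $t=0$ then $\mathbf{a}=0$ and $A = I = E_{1,n}(0)$, so we may assume $t \neq 0$. For any $P \in \SL(n,\F R)$,
\[
PAP^{-1} = I + (P\mathbf{a})(e_k^T P^{-1}),
\]
so the goal is to produce $P\in \SL(n,\F R)$ satisfying $P\mathbf{a} = t e_1$ and $e_k^T P^{-1} = e_n^T$; conjugation by such a $P$ then yields $E_{1,n}(t)$.

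I would construct $P$ in two stages, each an application of Lemma~\ref{L:orbits of sln}. First, applied to the row vector $\mathbf{a}^T = (a_1,\ldots,a_{k-1},0,\ldots,0)$, whose gcd is $t$, the lemma furnishes $B \in \SL(n,\F R)$ with $\mathbf{a}^T B = (t,0,\ldots,0)$; then $P_1 := B^T \in \SL(n,\F R)$ satisfies $P_1\mathbf{a} = te_1$, so
\[
P_1 A P_1^{-1} = I + t e_1\, \mathbf{w}^T, \qquad \mathbf{w}^T := e_k^T P_1^{-1}
\]
is the $k$-th row of $P_1^{-1}$. Two features of $\mathbf{w}$ drive the next stage. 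Expansion of $\det(P_1^{-1})=1$ along its $k$-th row shows $\gcd(w_1,\ldots,w_n)=1$; and the identity $\mathbf{a} = t\cdot (P_1^{-1} e_1)$ combined with $(\mathbf{a})_k=0$ and $t\neq 0$ forces $w_1=(P_1^{-1})_{k,1}=0$. In particular, $\mathbf{w} = (0,w_2,\ldots,w_n)^T$ with $\gcd(w_2,\ldots,w_n)=1$.

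For the second stage, I would take $P_2 \in \SL(n,\F R)$ in block form $\begin{pmatrix} 1 & 0 \\ 0 & Q \end{pmatrix}$, where $Q \in \SL(n-1,\F R)$ has last row $(w_2,\ldots,w_n)$. Such a $Q$ exists by another application of Lemma~\ref{L:orbits of sln} (in dimension $n-1$) followed by left multiplication by a suitable $\sigma_{i,j}$-type element of $\SL(n-1,\F R)$ to move the prescribed row into last position. Then $P_2 e_1 = e_1$ and the $n$-th row of $P_2$ equals $\mathbf{w}^T$, so $\mathbf{w}^T P_2^{-1} = e_n^T$, which gives
\[
P_2(I + t e_1 \mathbf{w}^T) P_2^{-1} = I + t e_1 e_n^T = E_{1,n}(t).
\]
Setting $P := P_2 P_1$ completes the construction. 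I do not anticipate any serious obstacle: the whole argument is careful bookkeeping organized around two applications of Lemma~\ref{L:orbits of sln}. The only delicate structural point is verifying that the intermediate row $\mathbf{w}^T$ begins with a zero entry and has unit gcd on its remaining entries---both properties are precisely what make the block-matrix construction in the second stage go through.
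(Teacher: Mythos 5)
Your argument is correct (for $n\geq 3$) and rests on the same key tool as the paper's proof, namely Lemma \ref{L:orbits of sln}, but the execution is genuinely different: you normalize the column $\mathbf a$ with an arbitrary matrix supplied by that lemma and then need a second conjugation to repair the row factor $e_k^T$, whereas the paper chooses the first conjugator so that no repair is needed. Concretely, since $\mathbf a$ is supported on the first $k-1$ coordinates, the paper applies the column-vector form of Lemma \ref{L:orbits of sln} in dimension $k-1$ to obtain $D\in\SL(k-1,\F R)$ with $D(a_1,\dots,a_{k-1})^T=(t,0,\dots,0)^T$ and pads it to $B=\left(\begin{smallmatrix} D&0\\ 0&I\end{smallmatrix}\right)$; this $B$ satisfies $e_k^TB^{-1}=e_k^T$ automatically, so $BAB^{-1}=E_{1,k}(t)$ in one stroke, and a single conjugation by $\sigma_{k,n}$ finishes. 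Your route reaches the same conclusion but at the cost of the extra verifications that $w_1=0$ and that $\gcd(w_1,\dots,w_n)=1$ (both of which you carry out correctly), plus a second appeal to the unimodular-row construction. One degenerate point to tidy up: when $n=2$ (hence $k=2$) your second stage asks for $Q\in\SL(1,\F R)$ with prescribed entry $w_2$, and $\SL(1,\F R)$ is trivial, so this only works if $w_2=1$; but in that case $A=E_{1,2}(a_1)$ is already of the required form (up to the unit ambiguity inherent in the definition of $\gcd$), so you should simply dispose of $n=2$ separately at the outset.
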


\begin{proof}
By the analogue of Lemma~\ref{L:orbits of sln} for column vectors, there exists a matrix $B \in \OP{SL}(n,\F R)$ of the form 
$\left(\begin{smallmatrix} D & 0 \\ 0 & I \end{smallmatrix} \right)$ where $D \in \OP{SL}(k-1,\F R)$
such that
$BAB^{-1} = E_{1,k}(t)$.
If $k \neq n$ then conjugation by $\sigma_{k,n}$ gives $E_{1,n}(t)$.
\end{proof}

The next lemma will be a key tool in our analysis.

\begin{lemma}[The double commutator lemma]\label{L:magic double commutators}
Let $R$ be a commutative ring with 1.
Let $A \in \SL(n,R)$ and set $B=A^{-1}$.
Write $A=(a_{i,j})$ and $B=(b_{i,j})$.
Fix indices $1 \leq i \neq \ell \leq n$ such that $a_{\ell,i}=0$.
Then for any $1 \leq j,k \leq n$ such that $j \neq i$ and $k \neq \ell$, 
and for any $x \in R$,
\[
[[ A,E_{i,j}(1)],E_{k,\ell}(x)] =
\left\{
\begin{array}{ll}
I+x b_{j,k} A e_{i,\ell} & \text{if } j \neq k, \\
I - x e_{i,\ell} + x(b_{j,j}-b_{j,i}) A e_{i,\ell} & \text{if } j = k. \\
\end{array}\right.
\]
\end{lemma}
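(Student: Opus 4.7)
The plan is a direct matrix computation that exploits two simple observations: the rank-one structure of elementary matrices, and the fact that the hypothesis $a_{\ell,i}=0$ pins down an entire row of the inner commutator.

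First I would compute $M:=[A,E_{i,j}(1)]$. Writing $E_{i,j}(1)=I+e_{i,j}$, conjugation gives $AE_{i,j}(1)A^{-1}=I+Ae_{i,j}B$, and multiplying on the right by $E_{i,j}(-1)=I-e_{i,j}$, together with the elementary identity $e_{i,j}Be_{i,j}=b_{j,i}\,e_{i,j}$, produces the compact form
\[
M - I \;=\; -e_{i,j} + Ae_{i,j}B - b_{j,i}\,Ae_{i,j}.
\]

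The key structural step is to use $a_{\ell,i}=0$ to show that row $\ell$ of $M-I$ vanishes. Indeed $Ae_{i,j}$ places column $i$ of $A$ into column $j$, so its $\ell$-th row is $a_{\ell,i}\,e_j^T=0$; hence the $\ell$-th rows of both $Ae_{i,j}B$ and $Ae_{i,j}$ are zero, and $-e_{i,j}$ contributes nothing in row $\ell$ either, since $i\neq\ell$. So row $\ell$ of $M$ is $e_\ell^T$, and then reading off $MM^{-1}=I$ in row $\ell$ shows that row $\ell$ of $M^{-1}$ is also $e_\ell^T$. Equivalently, $e_{k,\ell}\,M^{-1}=e_{k,\ell}$.

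With this in hand the outer commutator unwinds cleanly. Setting $V=E_{k,\ell}(x)=I+xe_{k,\ell}$, the identity $e_{k,\ell}M^{-1}=e_{k,\ell}$ yields $MVM^{-1}=I+xMe_{k,\ell}$, and since $e_{k,\ell}^{\,2}=0$ (because $k\neq\ell$), the product $MVM^{-1}V^{-1}$ collapses to
\[
[M,V] \;=\; I + x(M-I)\,e_{k,\ell}.
\]
It then remains to multiply each of the three summands of $M-I$ on the right by $e_{k,\ell}$, using $e_{i,j}e_{k,\ell}=\delta_{j,k}\,e_{i,\ell}$ and $e_{i,j}Be_{k,\ell}=b_{j,k}\,e_{i,\ell}$; splitting on $j=k$ versus $j\neq k$ then reproduces exactly the two formulae in the statement. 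No real obstacle arises in this argument: it is essentially a hygienic bookkeeping exercise. The only subtlety is that the hypothesis $a_{\ell,i}=0$ is precisely what makes the $\ell$-th row of $M^{-1}$ trivial; without it the formula for $[M,V]$ would acquire extra terms coming from $Me_{k,\ell}M^{-1}\neq Me_{k,\ell}$.
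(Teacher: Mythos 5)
Your computation is correct, and it reaches the stated formula by a route that is organized differently from the paper's. The paper never writes out the inner commutator $M=[A,E_{i,j}(1)]$ in full; instead it factors it as $(I+Ae_{i,j}A^{-1})E_{i,j}(-1)$ and applies the identity $[PQ,V]={}^{P}[Q,V]\cdot[P,V]$, so the outer commutator splits into two pieces, each computed separately (with the hypothesis $a_{\ell,i}=0$ used to kill the terms $e_{t,\ell}Ae_{i,j}=a_{\ell,i}e_{t,j}$), and the case distinction $j=k$ versus $j\neq k$ enters through the factor $[E_{i,j}(-1),E_{k,\ell}(x)]$. You instead compute $M-I=-e_{i,j}+Ae_{i,j}B-b_{j,i}Ae_{i,j}$ explicitly, observe that $a_{\ell,i}=0$ forces row $\ell$ of $M$ (hence of $M^{-1}$) to be $e_\ell^T$, and deduce the single clean identity $[M,E_{k,\ell}(x)]=I+x(M-I)e_{k,\ell}$, postponing the case split to the final rank-one multiplication. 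The underlying mechanism is identical --- the rank-one calculus $e_{p,q}De_{r,s}=d_{q,r}e_{p,s}$ plus the vanishing of $a_{\ell,i}$ --- but your organization buys a uniform formula for the double commutator before any case analysis, whereas the paper's buys the intermediate identity $[I+Ae_{i,j}A^{-1},E_{t,\ell}(x)]=I+xb_{j,t}Ae_{i,\ell}$, which it states for general $t\neq\ell$. All of your individual steps (the inversion of $M$ restricted to row $\ell$, the use of $e_{k,\ell}^2=0$, and the final evaluation of $(M-I)e_{k,\ell}$) check out.
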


\begin{proof}
For any $1 \leq s \neq t \leq n$ and any $D \in \SL(n,R)$ observe that 
$(De_{s,t}D^{-1})^2=De_{s,t}^2 D^{-1} = 0$.
Therefore
\[
(I+De_{s,t}D^{-1})^{-1} = I-De_{s,t}D^{-1}.
\]
Also, if we write $D=(d_{i,j})$, then for any $1 \leq p,q,r,s \leq n$,
\[
e_{p,q} D e_{r,s} = d_{q,r} e_{p,s}.
\]
By assumption $a_{\ell,i}=0$, hence for any $1 \leq j,t \leq n$,
\begin{eqnarray}
\nonumber
&& \qquad \mbox{\ \ \ } (I+Ae_{i,j}A^{-1}) e_{t,\ell} (I-Ae_{i,j}A^{-1}) \\
\nonumber
&& \qquad =
e_{t,\ell} - e_{t,\ell}Ae_{i,j} A^{-1} + 
Ae_{i,j}A^{-1} e_{t,\ell} - 
Ae_{i,j}A^{-1}e_{t,\ell}A e_{i,j} A^{-1}  \\
\nonumber
&& \qquad =
e_{t,\ell} - a_{\ell,i} e_{t,j} A^{-1} + 
b_{j,t}A e_{i,\ell} - a_{\ell,i} A e_{i,j} A^{-1} e_{t,j} A^{-1} \\
&& \qquad = e_{t,\ell} + b_{j,t}A e_{i,\ell}.
\end{eqnarray}
Therefore, if $t \neq \ell$ and $i \neq j$ then 
\begin{eqnarray}
\label{Eq:magic 2}
\nonumber
&& \qquad \mbox{\ \ \ \ } (I+Ae_{i,j}A^{-1}) E_{t,\ell}(x) (I-Ae_{i,j}A^{-1}) \\
\nonumber
&& \qquad = I + x(I+Ae_{i,j}A^{-1}) e_{t,\ell} (I-Ae_{i,j}A^{-1}) \\
&& \qquad = I + x e_{t,\ell} + x b_{j,t} A e_{i,\ell}.
\end{eqnarray}
It follows that if $t \neq \ell$ and $i \neq j$ then
\begin{eqnarray}
\nonumber
[I+Ae_{i,j}A^{-1},E_{t,\ell}(x)] &=&
(I+ x e_{t,\ell}+x b_{j,t}Ae_{i,\ell})(I-xe_{t,\ell}) \\
\nonumber
&=& I+xe_{t,\ell} + x b_{j,t} Ae_{i,\ell}-x e_{t,\ell} \\
\label{Eq:magic 3}
&=& I+x b_{j,t}Ae_{i,\ell}.
\end{eqnarray}
We are now ready to complete the proof of the lemma.
Choose $j,k$ such that $j \neq i$ and $k \neq \ell$.
Since $E_{i,j}(1)=I+e_{i,j}$,
\[
[A,E_{i,j}(1)]=AE_{i,j}(1)A^{-1}E_{i,j}(1)^{-1} =(I+Ae_{i,j}A^{-1})E_{i,j}(-1).
\]
Therefore,
\begin{multline}
\label{Eq:magic 4} 
[[ A,E_{i,j}(1)],E_{k,\ell}(x)] =
[(I+Ae_{i,j}A^{-1}) E_{i,j}(-1),E_{k,\ell}(x)] \\
=
{}^{(I+Ae_{i,j}A^{-1})}[E_{i,j}(-1),E_{k,\ell}(x)] 
[I+Ae_{i,j}A^{-1},E_{k,\ell}(x)].
\end{multline}
If $j \neq k$ then $[E_{i,j}(-1),E_{k,\ell}(x)]=I$ since $i \neq \ell$, so \eqref{Eq:magic 3} 
applied with $t=k$ shows that \eqref{Eq:magic 4} is equal to 
\[
I+xb_{j,k} A e_{i,\ell}
\]
as needed.
If $j=k$ then $[E_{i,j}(-1),E_{k,\ell}(x)]=E_{i,\ell}(-x)$.
Now \eqref{Eq:magic 2} applied with $t=i$ and \eqref{Eq:magic 3} applied with $t=k$, together with the fact that $e_{i,\ell}e_{i,\ell}=0$,
implies that \eqref{Eq:magic 4} is equal to
\[
(I-xe_{i,\ell}-xb_{j,i}Ae_{i,\ell})(I+xb_{j,j}Ae_{i,\ell}) =
I-xe_{i,\ell}-xb_{j,i}A e_{i,\ell} + x b_{j,j} A e_{i,\ell}.
\]
This completes the proof.
\end{proof}


\begin{definition}
An $n \times n$ matrix $H=(h_{i,j})$ over $\F R$ is called \emph{upper Hessenberg} if $h_{i,j}=0$ whenever $i>j+1$.
It is called \emph{lower Hessenberg} if $h_{i,j}=0$ whenever $j>i+1$.
\[
\OP{UP}=
\begin{pmatrix}
*      & * & \cdots & \cdots & * & *\\
*      & * & \cdots & \cdots & * & *\\
0      & * & \cdots & \cdots & * & *\\
0      & 0 & *      & \cdots & * & *\\
\vdots &   &        & \ddots &   & *\\
0      & 0 & \cdots & 0      & * & *
\end{pmatrix}
\qquad
\OP{LOW}=
\begin{pmatrix}
*      & * & 0 & 0      & \cdots & 0 \\
*      & * & * & 0      & \cdots & 0 \\
\vdots &   &   & \ddots &        & \vdots \\
*      & * &   &        &        & 0 \\
*      & * &   &        &        & * \\
 *      & * &   & \cdots&        & *
\end{pmatrix}
\]
\end{definition}

\begin{definition}\label{D:ESd}
Given a set of matrices $S \subseteq \SL(n,\F R)$ and $d \geq 0$, set
\[
\C E(S,d) = \{x \in \F R \ :\ E_{1,n}(x) \in B_S(d)\}.
\]
When $S=\{A\}$ for some $A \in \OP{SL}(n,\F R)$ we  write $\C E(A,d)$.
\end{definition}

\begin{remark}\label{R:easy facts on E}
With the notation of Definition~\ref{D:ESd}:
\begin{enumerate}[label=(\alph*)]
\item
If $A$ is conjugate to $B$ then clearly $\C E(A,d)=\C E(B,d)$.

\item
$x \in \C E(S,d) \implies -x \in \C E(S,d)$ because $E_{1,n}(x) \in B_S(d) \implies E_{1,n}(-x)=E_{1,n}(x)^{-1} \in B_S(d)$.

\item
Let $S^T= \{A^T \ :\ A\in S\}$, where $A^T$ denotes the transpose of $A$.
Then $\C E(S^T,d)=\C E(S,d)$ because $E_{n,1}(x)=E_{1,n}(x)^T$ and $\sigma_{1,n} E_{n,1}(x) \sigma_{1,n}^{-1}=E_{1,n}(-x)$.

\item
If $x \in \C E(S,d_1)$ and $y \in \C E(S,d_2)$ then $x+y \in \C E(S,d_1+d_2)$ because $E_{1,n}(x)E_{1,n}(y)=E_{1,n}(x+y)$.
\end{enumerate}
\end{remark}

%

\begin{lemma}
\label{L:pid eliminate one}
Let $R$ be a commutative ring with 1 and let $a,b,c \in R$.
Then $(ab-1)_R+(ac)_R=(ab-1)_R+(c)_R$.
\end{lemma}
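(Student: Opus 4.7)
The plan is to prove this by showing the two standard inclusions between the ideals $I_1 = (ab-1)_R + (ac)_R$ and $I_2 = (ab-1)_R + (c)_R$.

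First, the inclusion $I_1 \subseteq I_2$ is immediate: the generator $ab-1$ lies in $I_2$ by definition, and $ac = a \cdot c \in (c)_R \subseteq I_2$.

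For the reverse inclusion $I_2 \subseteq I_1$, the only thing to check is that $c \in I_1$, since $ab-1$ is already a generator of $I_1$. The key algebraic identity is
\[
c = ab \cdot c - (ab-1) \cdot c = b \cdot (ac) - c \cdot (ab-1),
\]
which exhibits $c$ as an element of $(ac)_R + (ab-1)_R = I_1$.

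I do not anticipate any obstacle: the statement is a one-line ideal manipulation, and the only observation required is the factorisation $c = b(ac) - c(ab-1)$, which amounts to the tautology $c = abc - (ab-1)c$. No assumption on $R$ beyond being a commutative ring with $1$ is used.
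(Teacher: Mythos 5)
Your proof is correct and is essentially identical to the paper's: the paper also notes $(ac)_R\subseteq (c)_R$ and then uses the same identity $c=-c(ab-1)+b(ac)$ to get the reverse inclusion. Nothing to add.
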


\begin{proof}
Since $(ac)_R\subseteq (c)_R$ it suffices to show that $(c)_R\subseteq (ab-1)_R+(ac)_R$, which follows from $c= -c(ab-1) +b(ac)$.
\end{proof}

\begin{lemma}\label{L:ideals in E from Hessenberg}
Let $n \geq 3$ and let $A=(a_{i,j})$ be upper Hessenberg in $\SL(n,\F R)$.
Set $B=A^{-1}$ and write $B=(b_{i,j})$.  
Fix $1 \leq i,\ell \leq n$ such that $\ell>i+1$.
Then for any $j \neq i,\ell$,
\[
(( b_{j,j}-b_{j,i})a_{i,i}-1)_{\F R}+ \sum_{k \neq i}(a_{k,i})_{\F R} \subseteq \C E(A,4).
\]
\end{lemma}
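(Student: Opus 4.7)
The plan is to apply the double commutator Lemma \ref{L:magic double commutators} in the case $k = j$ to produce a specific matrix in $B_A(4)$, identify that matrix via Lemma \ref{L:conjugate to elementary} as conjugate to an elementary $E_{1,n}(t)$, and finally tidy up the resulting ideal by iterating Lemma \ref{L:pid eliminate one}.

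To set things up, the upper-Hessenberg hypothesis combined with $\ell > i+1$ gives $a_{\ell,i} = 0$, so the hypotheses of Lemma \ref{L:magic double commutators} are met. Since $n \geq 3$ there is some $j \notin \{i, \ell\}$; taking $k = j$ in the second clause of the lemma yields
\[
[[A, E_{i,j}(1)], E_{j,\ell}(x)] \;=\; I - x e_{i,\ell} + x(b_{j,j}-b_{j,i})\, A e_{i,\ell}.
\]
The inner commutator equals $A$ times a conjugate of $A^{-1}$, hence lies in $B_A(2)$; applying Lemma \ref{L:BXn properties}\ref{L:BXn:assoc} one more time places the double commutator in $B_A(4)$.

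Next, I expand $A e_{i,\ell} = \sum_r a_{r,i}\, e_{r,\ell}$ and use the upper-Hessenberg property to truncate the sum to $r \leq i+1$. Because $\ell > i+1$, the resulting matrix is upper triangular of the shape $I + \sum_{r=1}^{\ell-1} \alpha_r(x)\, e_{r,\ell}$ handled by Lemma \ref{L:conjugate to elementary}, with $\alpha_i(x) = x\bigl((b_{j,j}-b_{j,i})a_{i,i} - 1\bigr)$, $\alpha_r(x) = x(b_{j,j}-b_{j,i})\, a_{r,i}$ for $r \leq i+1$ and $r \neq i$, and $\alpha_r(x) = 0$ otherwise. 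Hence it is $\OP{SL}(n,\F R)$-conjugate to $E_{1,n}(t_x)$ where $t_x$ is a gcd of the $\alpha_r(x)$. Taking $t_x = x\, t_1$ (legitimate in a p.i.d.\ since $\gcd$ is multiplicative in the scalar $x$ up to units, and $\C E(A,4)$ is stable under multiplication by units via conjugation by $\OP{diag}(u,u^{-1},1,\dots,1)$) and letting $x$ range over $\F R$ shows that the principal ideal $(t_1)_{\F R}$ lies inside $\C E(A,4)$.

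Finally, I invoke Lemma \ref{L:pid eliminate one} iteratively, with $a = b_{j,j}-b_{j,i}$, $b = a_{i,i}$, and $c$ running through $a_{r,i}$ for $r \neq i$, $r \leq i+1$, to strip the factor $a$ from each summand $(a\, a_{r,i})_{\F R}$; together with $a_{k,i} = 0$ for $k > i+1$ this rewrites $(t_1)_{\F R}$ as the ideal appearing in the statement. The main obstacle is spotting that the choice $k = j$ is the correct one: it is precisely this choice that produces the extra $-x e_{i,\ell}$ summand, hence the $-1$ in the first generator, and without that $-1$ Lemma \ref{L:pid eliminate one} would have nothing to act on.
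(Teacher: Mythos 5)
Your proposal is correct and follows essentially the same route as the paper: apply the double commutator lemma with $k=j$, observe that the upper Hessenberg hypothesis and $\ell>i+1$ put the resulting matrix in the form of Lemma \ref{L:conjugate to elementary}, and then use Lemma \ref{L:pid eliminate one} to strip the factor $b_{j,j}-b_{j,i}$ from the generators $(b_{j,j}-b_{j,i})a_{k,i}$. Your explicit handling of the unit ambiguity in the gcd (via conjugation by $\OP{diag}(u,u^{-1},1,\dots,1)$) is a detail the paper leaves implicit, but otherwise the arguments coincide.
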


\begin{proof}
Apply Lemma \ref{L:magic double commutators} to compute the double commutator:
\[
[[ A,E_{i,j}(1)],E_{j,\ell}(x)] = I+x(( b_{j,j}-b_{j,i})Ae_{i,\ell} - e_{i,\ell}).
\]
Since $A$ is upper Hessenberg and $i+1 <\ell$, the matrix on the
right-hand side has the form in 
Lemma~\ref{L:conjugate to elementary} (with $k=\ell$).
It is therefore conjugate to
$E_{1,n}(xt)$, where $t$ is the $\gcd$ of
\[
\{ (b_{j,j}-b_{j,i})a_{i,i}-1\} \cup \{ (b_{j,j}-b_{j,i})a_{k,i} : k \neq i\}.
\]
By Lemma~\ref{L:pid eliminate one},
\[
(t)_{\F R} = ((b_{j,j}-b_{j,i})a_{i,i}-1)_{\F R} + \sum_{k \neq i} (a_{k,i})_{\F R}.
\]
Since the double commutator above is in $B_A(4)$ and since $x \in \F R$ is arbitrary, it follows that $\C E(A,4) \supseteq \{ tx \ :\ x \in \F R\}=(t)_{\F R}$ as needed.
\end{proof}

We remark that if $u,v \in \F R^n$ are column vectors such that $u=Av$ for some $A \in \OP{GL}(n,\F R)$ then $\gcd(u_1,\dots,u_n)=\gcd(v_1,\dots,v_n)$ because every $u_i$ is a linear combination of $v_1,\dots,v_n$, and since $A$ is invertible, also every $v_i$ is a linear combination of $u_1,\dots,u_n$.

%
\begin{lemma}\label{L:conjugation to Hessenberg}
Let $n \geq 2$ and let $M=(m_{i,j})$  be an $n \times n$ matrix over $\F R$.
Then $M$ is conjugate by a matrix $A \in \SL(n,\F R)$ 
to an upper Hessenberg matrix $H=(h_{i,j})$ such that $h_{1,1}=m_{1,1}$ and
$h_{2,1}=\gcd(m_{2,1},m_{3,1},\dots,m_{n,1})$.
\end{lemma}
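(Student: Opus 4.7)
The plan is to reduce one column at a time, from left to right, by conjugating with a sequence of block-diagonal matrices in $\SL(n,\F R)$, each of which leaves the already-reduced columns untouched.

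First I would apply the column-vector analogue of Lemma \ref{L:orbits of sln} (obtained by transposition) to the vector $(m_{2,1}, m_{3,1}, \ldots, m_{n,1})^T$ to produce $D_1 \in \SL(n-1,\F R)$ sending this vector to $(t, 0, \ldots, 0)^T$ with $t = \gcd(m_{2,1}, \ldots, m_{n,1})$. Setting $A_1 = \OP{diag}(1, D_1) \in \SL(n,\F R)$, a direct block computation shows that the first column of $M^{(1)} := A_1 M A_1^{-1}$ is $(m_{1,1}, t, 0, \ldots, 0)^T$, which already fulfils the lemma's two explicit conditions on $h_{1,1}$ and $h_{2,1}$.

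Next I would iterate: assuming that after $k$ steps I have $A_1, \ldots, A_k$ with $A_j = \OP{diag}(I_j, D_j)$ and $D_j \in \SL(n-j,\F R)$, and that the first $k$ columns of $M^{(k)} := (A_k \cdots A_1) M (A_k \cdots A_1)^{-1}$ satisfy the upper-Hessenberg condition, I would apply the column analogue of Lemma \ref{L:orbits of sln} to $(m^{(k)}_{k+2,\,k+1}, \ldots, m^{(k)}_{n,\,k+1})^T$ to obtain $D_{k+1} \in \SL(n-k-1,\F R)$ that reduces it to $(\gcd, 0, \ldots, 0)^T$, and then conjugate by $A_{k+1} := \OP{diag}(I_{k+1}, D_{k+1})$. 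After $n-1$ iterations, the matrix $A := A_{n-1} \cdots A_1 \in \SL(n,\F R)$ conjugates $M$ to the desired upper-Hessenberg form.

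The only point requiring genuine care is verifying that each conjugation by $A_{k+1}$ leaves the first $k$ columns of $M^{(k)}$ intact. This follows from the fact that both $A_{k+1}$ and $A_{k+1}^{-1}$ have their first $k+1$ columns equal to $e_1, \ldots, e_{k+1}$, so that the $j$th column of $A_{k+1} M^{(k)} A_{k+1}^{-1}$ equals $A_{k+1}$ applied to the $j$th column of $M^{(k)}$ for each $j \leq k+1$. For $j \leq k$ the Hessenberg condition on $M^{(k)}$ forces the entries in rows $k+2, \ldots, n$ to vanish, so $A_{k+1}$ acts trivially and the column is unchanged; for $j=k+1$, the action of $A_{k+1}$ is precisely what is needed to reduce the subdiagonal part to $(\gcd, 0, \ldots, 0)^T$. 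Everything else is a routine application of Lemma \ref{L:orbits of sln}.
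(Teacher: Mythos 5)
Your proof is correct, and it is essentially the same reduction the paper uses: the paper simply cites the construction in Newman's Theorem III.1 (a product of determinant-one block matrices acting only on rows/columns $2,\dots,n$, then a transpose), whereas you carry out that column-by-column gcd reduction explicitly via the column analogue of Lemma \ref{L:orbits of sln}. The one point needing care --- that conjugation by $\OP{diag}(I_{k+1},D_{k+1})$ does not disturb the already-reduced columns --- is exactly the point you verify, so nothing is missing.
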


\begin{proof}
In the proof of \cite[Theorem III.1]{MR0340283} a matrix $A$ is constructed such that $H=A^{-1}MA$ is a lower Hessenberg matrix.
Moreover, $A$ is the product of matrices of determinant $1$ of the form $I+\alpha e_{p,p}+\beta e_{p,q} +\gamma e_{q,p}+\delta e_{q,q}$ for some $2 \leq p,q \leq n$.
Thus $A$ has the form $\left( \begin{smallmatrix} 1 & 0 \\ 0 & D \end{smallmatrix} \right)$.
Therefore $h_{1,1}=m_{1,1}$ and $(h_{1,2},0,\dots,0)=(m_{1,2},\dots,m_{1,n})\cdot D$, so $h_{1,2}=\gcd(m_{1,2},m_{1,3},\dots,m_{1,n})$.
By taking transposes, we obtain the statement of the lemma.
\end{proof}

%
%

\begin{lemma}\label{L:off diagonals in E}
Let $A \in \SL(n,\F R)$ where $n \geq 3$, and let $1\leq m\leq n$.
Then
\[
\sum_{k\neq m} (a_{k,m})_{\F R} \subseteq \C E(A,4).
\]
\end{lemma}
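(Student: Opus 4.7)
The plan is to combine the Hessenberg-reduction tools of Lemmas \ref{L:conjugation to Hessenberg} and \ref{L:ideals in E from Hessenberg}, after first reducing from an arbitrary column $m$ to the first column.

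For the reduction to $m=1$, I would use a signed-permutation conjugation. If $m \neq 1$, set $A' = \sigma_{1,m} A \sigma_{1,m}^{-1}$. A direct calculation (using $\sigma_{1,m}^{-1} = \sigma_{m,1}$) shows that the first column of $A'$ matches, up to signs, the $m$-th column of $A$: namely $a'_{1,1} = a_{m,m}$, $a'_{m,1} = -a_{1,m}$, and $a'_{i,1} = a_{i,m}$ for $i \notin \{1,m\}$. Hence
\[
\sum_{k \neq 1}(a'_{k,1})_{\F R} = \sum_{k \neq m}(a_{k,m})_{\F R},
\]
and Remark \ref{R:easy facts on E}(a) gives $\C E(A,4) = \C E(A',4)$. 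Replacing $A$ by $A'$, I may therefore assume $m = 1$, and it suffices to show $\sum_{k \neq 1}(a_{k,1})_{\F R} \subseteq \C E(A,4)$.

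Next, I would apply Lemma \ref{L:conjugation to Hessenberg} to produce an upper Hessenberg matrix $H$ conjugate to $A$ with $h_{2,1} = \gcd(a_{2,1}, \ldots, a_{n,1})$; call this common value $g$. Since $H$ is upper Hessenberg, $h_{k,1} = 0$ for all $k \geq 3$, so $\sum_{k \neq 1}(h_{k,1})_{\F R} = (g)_{\F R}$. Applying Lemma \ref{L:ideals in E from Hessenberg} to $H$ with $i = 1$, $\ell = 3$, $j = 2$ (valid for $n \geq 3$) yields
\[
(g)_{\F R} \subseteq \bigl((b_{2,2}-b_{2,1})h_{1,1}-1\bigr)_{\F R} + (g)_{\F R} \subseteq \C E(H,4) = \C E(A,4),
\]
which is the required inclusion.

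No step is genuinely hard: the double-commutator analysis has already been carried out in Lemma \ref{L:magic double commutators} and packaged in Lemma \ref{L:ideals in E from Hessenberg}. The only point to watch is the bookkeeping in the signed permutation conjugation of the first step, ensuring that it moves the $m$-th column of $A$ into the first column (up to signs that have no effect on the ideals generated).
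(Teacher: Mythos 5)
Your proof is correct and follows essentially the same route as the paper's: conjugate by $\sigma_{1,m}$ to move column $m$ into column $1$ (the sign on $a_{1,m}$ being irrelevant to the ideal), reduce to upper Hessenberg form via Lemma \ref{L:conjugation to Hessenberg} so that the gcd sits in position $(2,1)$, and then extract the ideal from Lemma \ref{L:ideals in E from Hessenberg}. The only cosmetic difference is your choice $\ell=3$ where the paper takes $\ell=n$; both satisfy $\ell>i+1$ and $j\neq i,\ell$, so nothing changes.
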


\begin{proof}
Let $B$ be the matrix obtained from $A$ by conjugating by the matrix $\sigma_{1,m}$ if $m >1$ and set $B=A$ if $m=1$.
The first column of $B$ is equal to the first column of $A$ if $m=1$, and if $m>1$ it is equal to
\[
(a_{m,m},a_{2,m},\dots,a_{m-1,m},-a_{1,m}, a_{m+1,m},\dots,a_{n,m}).
\]
Lemma~\ref{L:conjugation to Hessenberg} implies that $B$ is conjugate to an upper Hessenberg matrix $H$ whose first column is $(a_{m,m},t,0,\dots,0)$, where
\begin{eqnarray*}
t=\gcd(a_{2,1},\dots,\dots, a_{n,1}) & & \text{if $m=1$} \\
t=\gcd(a_{1,m},\dots, a_{m-1,m}, a_{m+1,m},\dots,a_{n,m}) & & \text{if $m>1$}.
\end{eqnarray*}
By Lemma~\ref{L:ideals in E from Hessenberg} applied to $H$ with $i=1,\ell=n$ and $j=2$, we see that $\C E(A,4)$ contains $(t)_{\F R}= \sum_{k \neq m} (a_{k,m})_{\F R}$.
\end{proof}

Let $R$ be any commutative ring with 1. 
Set
\[
\C M(R) =\{ \F m \lhd R \ :\ m \text{ is a maximal ideal}\}.
\]
Recall that 
\[
\OP{PSL}(n,R) = \OP{SL}(n,R) / \{ \text{the scalar matrices $\lambda I$, $\lambda \in R^\times$}\}.
\]

\begin{definition}\label{D:slnp-and-piA}
Let $\F R$ be a principal ideal domain and $I$ an ideal. 
Set
\[
\SL(n,I) \, \stackrel{\text{def}}{=} \, \OP{Ker}\big( \SL(n,\F R) \to \PSL(n,\F R/I) \big). 
\]
For any $A \in \SL(n,\F R)$ set
\[
\Pi(A) = \{ \F p \in \C M(\F R) \ :\ A \in \SL(n,\F p)\}.
\]
\end{definition}

\begin{remark}\label{R:PiA}
\begin{enumerate}[label=(\alph*)]
\item
Since $\F R$ is a p.i.d, it is a unique factorization domain, so every $0 \neq x \in \F R$ belongs to only finitely many prime ideals.  Hence $\Pi(A)$ is always finite, except if $A$ is a scalar matrix.  

\item
If $A$ and $B$ are conjugate matrices then $\Pi(A)=\Pi(B)$.

\item
\label{R:PiA:prod}
$\Pi(AB) \supseteq \Pi(A) \cap \Pi(B)$.
\end{enumerate}
\end{remark}

\begin{proposition}\label{P:prime ideals of A in E}
Let $\F R$ be a principal ideal domain and let $n \geq 3$.
Then for any $A \in \OP{SL}(n,\F R)$ there exists an ideal $I \lhd \F R$ contained in $\C E(A, 4n+4)$ such that for any $\F p \in \C M(\F R)$, 
\[
I \subseteq \F p \implies \F p \in \Pi(A).
\]
In fact, $I \subseteq J_1+\dots+J_{n+1}$ where $J_i \subseteq \C E(A,4)$ are ideals in $\F R$.
\end{proposition}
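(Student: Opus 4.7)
The plan is to construct $n+1$ ideals $J_1,\dots,J_{n+1}$, each contained in $\C E(A,4)$, and take $I:=J_1+\dots+J_{n+1}$. Since a sum of $n+1$ elements of $\C E(A,4)$ lies in $\C E(A,4(n+1))$ (by iterated multiplication of elementary matrices, as in Remark~\ref{R:easy facts on E}(d)), this automatically yields $I\subseteq\C E(A,4n+4)$. It then suffices to verify $I\subseteq\F p\Rightarrow \F p\in\Pi(A)$. By conjugation invariance of both $\C E(-,4)$ and $\Pi(-)$ (Remark~\ref{R:easy facts on E}(a), Remark~\ref{R:PiA}(b)) combined with Lemma~\ref{L:conjugation to Hessenberg}, I may assume $A=(a_{i,j})$ is upper Hessenberg, with inverse $B=(b_{i,j})$.

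For the first $n-2$ ideals I apply Lemma~\ref{L:ideals in E from Hessenberg} with $(i,j,\ell)=(i,i+1,n)$ for $i=1,\ldots,n-2$ (the hypotheses $\ell>i+1$ and $j\neq i,\ell$ are satisfied because $n\geq 3$), producing
\[
J_i:=\big((b_{i+1,i+1}-b_{i+1,i})a_{i,i}-1\big)_{\F R}+\sum_{k\neq i}(a_{k,i})_{\F R}\subseteq\C E(A,4).
\]
Each $J_i$ packages the off-diagonal entries of column $i$ together with a witness for the congruence $a_{i,i}\equiv a_{i+1,i+1}\pmod{\F p}$. For $m=n-1,n$ I take the pure column ideals $J_m:=\sum_{k\neq m}(a_{k,m})_{\F R}\subseteq\C E(A,4)$ provided by Lemma~\ref{L:off diagonals in E}. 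Together $J_1,\dots,J_n$ account for every off-diagonal entry of $A$ and yield the chain of congruences $a_{1,1}\equiv\dots\equiv a_{n-1,n-1}\pmod{\F p}$ once $I\subseteq\F p$ forces $A\bmod\F p$ to be diagonal.

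The hard part will be constructing $J_{n+1}$ so as to recover the missing comparison $a_{n-1,n-1}\equiv a_{n,n}\pmod{\F p}$: Lemma~\ref{L:ideals in E from Hessenberg} applied to an upper Hessenberg matrix demands $\ell>i+1$ and hence can never involve the entry $a_{n,n}$. My plan is to sidestep this via the transpose. Since $A^T$ is lower Hessenberg and $\C E(A^T,4)=\C E(A,4)$ by Remark~\ref{R:easy facts on E}(c), it suffices to establish a lower-Hessenberg analogue of Lemma~\ref{L:ideals in E from Hessenberg}. The proof carries over verbatim provided one instead picks $\ell<i-1$ in Lemma~\ref{L:magic double commutators}, which still guarantees the required vanishing $a_{\ell,i}=0$; the matrix produced by the double commutator is then lower-triangular in column $\ell$ and, by the downward variant of Lemma~\ref{L:conjugate to elementary} (conjugate via a block matrix $\mathrm{diag}(I,D)$ to some $E_{k+1,k}(t)$ and then invoke Lemma~\ref{L:conjugate elementary matrices}), is conjugate to $E_{1,n}(\pm t)$. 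Applied to $M=A^T$ with $(i,j,\ell)=(n,n-1,1)$ — valid for every $n\geq 3$ — and translated using $(A^T)_{p,q}=a_{q,p}$ and $(A^T)^{-1}_{p,q}=b_{q,p}$, this will yield
\[
J_{n+1}:=\big((b_{n-1,n-1}-b_{n,n-1})a_{n,n}-1\big)_{\F R}+(a_{n,n-1})_{\F R}\subseteq\C E(A,4).
\]

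To close the argument, I will assume $I\subseteq\F p$. The inclusions $J_1,\dots,J_n\subseteq\F p$ collectively force every off-diagonal entry of $A$ into $\F p$, so $A\bmod\F p=\mathrm{diag}(\lambda_1,\dots,\lambda_n)$ and hence $B\bmod\F p=\mathrm{diag}(\lambda_1^{-1},\dots,\lambda_n^{-1})$. The ``extra'' generators of $J_i$ and $J_{n+1}$ will then reduce modulo $\F p$ to $\lambda_{i+1}^{-1}\lambda_i-1$ and $\lambda_{n-1}^{-1}\lambda_n-1$ respectively, and since they lie in $\F p$ they give $\lambda_1\equiv\dots\equiv\lambda_n\pmod{\F p}$. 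Thus $A\bmod\F p$ is a scalar matrix, which is exactly the assertion $\F p\in\Pi(A)$.
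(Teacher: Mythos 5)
Your proof is correct, and while it follows the same skeleton as the paper's (reduce to upper Hessenberg, extract $n+1$ ideals from double commutators via Lemma~\ref{L:ideals in E from Hessenberg}, each landing in $\C E(A,4)$, and sum them), the choice of comparisons is genuinely different. The paper compares every diagonal entry to $a_{1,1}$ through a hub: it takes the triples $(i,j,\ell)=(i,1,n)$ for $i=2,\dots,n-2$, then $(1,n-1,n)$, and finally $(1,n,n-1)$ to reach $a_{n,n}$; that last triple needs $\ell=n-1>i+1=2$, i.e.\ $n\geq 4$, so the paper must treat $n=3$ separately with an ad hoc conjugation by an explicit matrix $M$ followed by transposition. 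You instead chain adjacent comparisons via $(i,i+1,n)$ for $i\leq n-2$ and recover the missing link $a_{n-1,n-1}\equiv a_{n,n}$ by passing to the lower Hessenberg matrix $A^T$ with $(i,j,\ell)=(n,n-1,1)$, which is legitimate for every $n\geq 3$ and eliminates the case split. The price is that you must supply the lower-Hessenberg variant of Lemma~\ref{L:ideals in E from Hessenberg}; your sketch of it is sound — $\ell<i-1$ gives the vanishing $m_{\ell,i}=0$ needed for Lemma~\ref{L:magic double commutators}, the double commutator then has its off-diagonal entries confined to column $\ell$ in rows $\geq i-1>\ell$, and conjugating by $\operatorname{diag}(I,D)$ (the column version of Lemma~\ref{L:orbits of sln}) compresses these to a single elementary matrix whose parameter is computed by Lemma~\ref{L:pid eliminate one}, while Remark~\ref{R:easy facts on E}(c) converts $\C E(A^T,4)$ back to $\C E(A,4)$. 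The closing verification (off-diagonal entries of $A$, hence of $B$, vanish mod $\F p$, and the extra generators force all $\lambda_i$ equal) matches the paper's. Net effect: your argument is slightly cleaner and uniform in $n$, at the cost of one extra lemma variant the paper does not state.
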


\begin{proof}
By Lemma \ref{L:conjugation to Hessenberg} and Remarks~\ref{R:easy facts on E} and \ref{R:PiA},  we may assume that $A$ is upper Hessenberg.  
Set $B= A^{-1}$ and let $a_{i,j}$ and $b_{i,j}$ denote the entries of $A$ and $B$.
From Lemma \ref{L:off diagonals in E} we obtain ideals
\begin{eqnarray}\label{Eq:primes AE 0}
&& J_1 = \sum_{k \neq n-1} (a_{k,n-1})_{\F R} \subseteq \C E(A,4) \\
\nonumber
&& J_2 = \sum_{k \neq n} (a_{k,n})_{\F R} \subseteq \C E(A,4).
\end{eqnarray}
By Remark~\ref{R:easy facts on E} and Lemma~\ref{L:ideals in E from Hessenberg} with $j= 1$ and $\ell= n$,
\begin{equation}\label{Eq:primes AE 1}
\sum_{i=2}^{n-2} \left(((b_{1,1}-b_{1,i})a_{i,i}-1)_{\F R}+ \sum_{k\neq i} (a_{k,i})_{\F R}\right) \subseteq \C E(A,4(n-3)).
\end{equation}
This is a sum of $n-3$ ideals $J_3,\dots, J_{n-1}$, each contained in $\C E(A,4)$.
Notice that this is the zero ideal if $n=3$. 
Applying Lemma~\ref{L:ideals in E from Hessenberg} with $i= 1$, $j= n-1$ and $\ell= n$, we obtain the ideal $J_n$ given by
\begin{equation}\label{Eq:primes AE 2}
((b_{n-1,n-1}-b_{n-1,1})a_{1,1}-1)_{\F R}+ \sum_{k\neq 1} (a_{k,1})_{\F R}  \subseteq \C E(A,4).
\end{equation}

Let $\F a$ be the ideal in $\F R$ generated by the off-diagonal elements of $A$:
\[
\F a = \sum_{i \neq j} (a_{i,j})_{\F R}.
\]
We claim that 
\[
b_{i,j} \in \F a \qquad \text{for all $i \neq j$.}
\]
Indeed, if $\F a \neq \F R$ then $A\!\! \mod \F a$ is a diagonal matrix in $\SL(n,\F R/\F a)$ and therefore so is $B\!\! \mod \F a = (A\!\! \mod \F a)^{-1}$, hence $B\!\! \mod \F a$ has its off-diagonal entries in $\F a$.

Assume $n \geq 4$.
We will show that the ideal 
\[
I= \F a + \sum_{i= 2}^{n- 2}(b_{1,1}a_{i,i}- 1)_{\F R}+ (b_{n-1, n-1}a_{1,1}- 1)_{\F R}+ (b_{n,n}a_{1,1}- 1)_{\F R} 
\]
has the properties stated in the proposition.

Suppose $I \subseteq \F p$ for some maximal ideal $\F p$ of $\F R$.
In particular, we must have $\F a \neq \F R$.
Set $\bar{A}=A\!\! \mod I$ and $\bar{B}=B\!\! \mod I$, and write $\bar{A}=(\bar{a}_{i,j})$ and $\bar{B}=(\bar{b}_{i,j})$.
Since $\F a \subseteq I$ it follows that $\bar{A}$ and $\bar{B}=\bar{A}^{-1}$ are diagonal and therefore $\bar{a}_{i,i}=\bar{b}_{i,i}^{-1}$ for all $1 \leq i \leq n$.
By the definition of $I$ we get that $\bar{a}_{1,1}=\bar{b}_{n-1,n-1}^{-1}$, $\bar{a}_{1,1}=\bar{b}_{n,n}^{-1}$ and $\bar{a}_{2,2},\dots,\bar{a}_{n-2,n-2}=\bar{b}_{1,1}^{-1}$.
Hence $\bar{a}_{1,1}=\bar{b}_{1,1}^{-1}=\bar{a}_{2,2}=\dots=\bar{a}_{n-2,n-2}$, $\bar{a}_{n,n}=\bar{b}_{n,n}^{-1}=\bar{a}_{1,1}$ and $\bar{a}_{n-1,n-1}=\bar{b}_{n-1,n-1}^{-1}=\bar{a}_{1,1}$.
We deduce that $\bar{a}_{1,1}=\dots=\bar{a}_{n,n}$.  It follows that $A\!\! \mod \F p$ is a scalar matrix, so $\F p \in \Pi(A)$.
It remains to show that $I \subseteq \C E(A,4n+4)$.

Applying Lemma \ref{L:ideals in E from Hessenberg} with $i= 1$, $j= n$ and $\ell= n-1$, we obtain an ideal $J_{n+1}$ given by
\begin{equation}\label{Eq:primes AE 3}
((b_{n,n}-b_{n,1})a_{1,1}-1)_{\F R}+ \sum_{k\neq 1} (a_{k,1})_{\F R}  \subseteq \C E(A,4).
\end{equation}
Let $J=\sum_{i=1}^{n+1}$ be the sum of the ideals from \eqref{Eq:primes AE 0}--\eqref{Eq:primes AE 3}.
By Remark \ref{R:easy facts on E},
\[
J \subseteq \C E(A,4n+4).
\]
It is clear that $\F a \subseteq J$.
Since $b_{i,j} \in \F a$ for all $i \neq j$ it follows that $b_{1,1}a_{i,i}-1 \in J$ for all $2 \leq i \leq n-2$ (by \eqref{Eq:primes AE 1}), that $b_{n-1,n-1}a_{1,1}-1 \in J$ (by \eqref{Eq:primes AE 2}) and that $b_{n,n}a_{1,1}-1 \in J$ (by \eqref{Eq:primes AE 3}).
We deduce that $I \subseteq J$ (in fact equality holds) and 
this proves the proposition for $n \geq 4$.

Assume that $n=3$.
The argument above does not go through since Lemma \ref{L:ideals in E from Hessenberg} cannot be applied to deduce \eqref{Eq:primes AE 3}.
Define 
\[
I= \F a + (b_{2, 2}a_{1,1}- 1)_{\F R}+ (b_{1,1}a_{3,3}- 1)_{\F R}.
\]
We will show that $I$ has the required properties.
Let $\F p$ be a maximal ideal containing $I$.
Thus $\F a \neq \F R$.
Let $\bar{A}=A\!\! \mod I$ and $\bar{B} = B\!\! \mod I$ as above.
Since $\F a \subseteq I$ these matrices are diagonal and $\bar{a}_{i,i}=\bar{b}_{i,i}^{-1}$ for all $1 \leq i \leq 3$.
By the definition of $I$ also, $\bar{a}_{1,1}=\bar{b}_{2,2}^{-1}=\bar{a}_{2,2}$ and $\bar{a}_{3,3}=\bar{b}_{1,1}^{-1}=\bar{a}_{1,1}$, so $\bar{A}$ is a scalar matrix.
Therefore $A\!\! \mod \F p$ is a scalar matrix so $\F p \in \Pi(A)$, as needed.
It remains to show that $I \subseteq \C E(A,16)$.

Set $M=\left(\begin{smallmatrix}0 & 0 & 1 \\ 0 & -1 & 0 \\ 1 & 0 & 0 \end{smallmatrix}\right) \in \SL(3,\F R)$.
Set $C=(MAM^{-1})^T$ and $D=(MBM^{-1})^T$.
Then 
\[
C=\left( \begin{smallmatrix} a_{3,3} & -a_{2,3} & a_{1,3} \\ -a_{3,2} & a_{2,2} & -a_{1,2} \\ 0 & -a_{2,1} & a_{1,1} \end{smallmatrix} \right),
D=\left( \begin{smallmatrix} b_{3,3} & -b_{2,3} & b_{1,3} \\ -b_{3,2} & b_{2,2} & -b_{1,2} \\ b_{3,1} & -b_{2,1} & b_{1,1} \end{smallmatrix} \right).
\]
Applying Lemma~\ref{L:ideals in E from Hessenberg} to the Hessenberg matrix $C$ with $i= 1$, $j= 2$ and $\ell= 3$ and using Remark \ref{R:easy facts on E}, we see that
\begin{equation}\label{Eq:primes AE 3-3}
((b_{2,2}+b_{3,2})a_{3,3}-1)_{\F R} 
\subseteq \C E(C,4) = \C E(A,4).
\end{equation}
Let $J$ be the sum of the ideals in \eqref{Eq:primes AE 0}, \eqref{Eq:primes AE 2} and \eqref{Eq:primes AE 3-3}.
Then $J \subseteq \C E(A,8+ 4+4)= \C E(A,16)$ by Remark~\ref{R:easy facts on E}.
It is easy to check that $\F a \subseteq J$.
Since $b_{3,2}, b_{2,1} \in \F a$ it follows that $b_{2,2}a_{3,3}-1$ and  $b_{2,2}a_{1,1}-1 \in J$.
Therefore $I \subseteq J$, 
and this completes the proof.
\end{proof}

\begin{corollary}\label{C:empty intersection of Pi}
Let $\F R$ be a principal ideal domain and $n \geq 3$.
Let $S=\{ A_1,\dots,A_k \} \subseteq \OP{SL}(n,\F R)$.
Then there are ideals $J_1,\dots,J_{k(n+1)}$, each contained in $\C E(S,4)$, and an ideal $I \subseteq \sum_{i} J_i$ such that for any $\F p \in \C M(\F R)$, if $\F p \supseteq I$ then $p \in \bigcap_{A \in S} \Pi(A)$.
\end{corollary}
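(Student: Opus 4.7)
The plan is to reduce immediately to Proposition~\ref{P:prime ideals of A in E}, since the corollary is really just its ``multi-matrix'' packaging. First I would apply Proposition~\ref{P:prime ideals of A in E} to each $A_i$ separately: this yields, for every $1 \leq i \leq k$, an ideal $I_i \lhd \F R$ together with a decomposition $I_i \subseteq J^{(i)}_1+\dots+J^{(i)}_{n+1}$ where each $J^{(i)}_j \subseteq \C E(A_i,4)$, and with the property that any $\F p \in \C M(\F R)$ containing $I_i$ lies in $\Pi(A_i)$.

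The next step is to promote the inclusions from $\C E(A_i,4)$ to $\C E(S,4)$. This is immediate from Definition~\ref{D:ESd} together with Lemma~\ref{L:BXn properties}\ref{L:BXn:incl}: since $\{A_i\}\subseteq S$, one has $B_{A_i}(4)\subseteq B_S(4)$, and hence $\C E(A_i,4)\subseteq \C E(S,4)$. Consequently every $J^{(i)}_j$ sits inside $\C E(S,4)$.

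I would then relabel the $k(n+1)$ ideals $\{J^{(i)}_j : 1\leq i\leq k,\ 1\leq j\leq n+1\}$ as $J_1,\dots,J_{k(n+1)}$, and define
\[
I \;=\; I_1+\dots+I_k \;\subseteq\; \sum_{m=1}^{k(n+1)} J_m.
\]
If a maximal ideal $\F p$ contains $I$, then $\F p\supseteq I_i$ for every $i$, so by Proposition~\ref{P:prime ideals of A in E} we obtain $\F p\in \Pi(A_i)$ for all $i$, i.e.\ $\F p\in \bigcap_{A\in S}\Pi(A)$, as required.

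There is no real obstacle here: the entire content already lives inside Proposition~\ref{P:prime ideals of A in E}, and the only new observations are the monotonicity $\C E(A_i,4)\subseteq \C E(S,4)$ and the elementary fact that a prime containing a sum of ideals contains each summand. The proof is essentially a bookkeeping exercise.
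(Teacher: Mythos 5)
Your proof is correct and is essentially identical to the paper's own (one-line) proof: apply Proposition~\ref{P:prime ideals of A in E} to each $A_i$, note $\C E(A_i,4)\subseteq \C E(S,4)$, and set $I=\sum_i I_i$. No issues.
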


\begin{proof}
For each $A_i$ choose $I_i \subseteq J_{i,1} + \dots+J_{i,n+1}$ as in Proposition \ref{P:prime ideals of A in E} and set $I=\sum_i I_i$.
\end{proof}

\begin{proposition}\label{P:S conjugation generates:revised}
Let $\F R$ be a p.i.d., let $n \geq 3$ and let $S$ be a finite subset of $\OP{SL}(n,\F R)$.
Then $\langle \langle S \rangle \rangle \supseteq  \OP{EL}(n,\F R)$ if and only if $\bigcap_{A \in S} \Pi(A) = \emptyset$.
In this case there are ideals $J_1,\dots,J_{k(n+1)} \subseteq \C E(S,4)$ such that $J_1+\dots+J_{k(n+1)}=\F R$.
\end{proposition}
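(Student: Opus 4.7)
The plan is to establish the two implications separately; the backward direction will also yield the quantitative statement about the ideals as an immediate byproduct. Write $S=\{A_1,\dots,A_k\}$, and recall that $\F R$ being a p.i.d.\ means every proper ideal of $\F R$ is contained in a maximal ideal.

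For the forward implication I would argue by contrapositive. If some $\F p$ lies in $\bigcap_{A\in S}\Pi(A)$ then $S\subseteq \SL(n,\F p)$, and the latter is by definition the kernel of $\SL(n,\F R)\to \PSL(n,\F R/\F p)$, hence a proper normal subgroup of $\SL(n,\F R)$. It follows that $\langle\langle S\rangle\rangle\subseteq \SL(n,\F p)$. But $E_{1,n}(1)$ reduces modulo $\F p$ to $I+\bar{1}\,e_{1,n}$ with $\bar{1}\neq 0$ (since $\F p$ is proper), which has a nonzero off-diagonal entry and is therefore not scalar in $\SL(n,\F R/\F p)$. Hence $E_{1,n}(1)\notin \langle\langle S\rangle\rangle$, contradicting the assumed inclusion $\OP{EL}(n,\F R)\subseteq \langle\langle S\rangle\rangle$.

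For the backward implication, assume $\bigcap_{A\in S}\Pi(A)=\emptyset$ and apply Corollary~\ref{C:empty intersection of Pi} to $S$. This provides ideals $J_1,\dots,J_{k(n+1)}\subseteq \C E(S,4)$ and an ideal $I\subseteq J_1+\cdots+J_{k(n+1)}$ with the property that every maximal ideal $\F p\supseteq I$ belongs to $\bigcap_A\Pi(A)$. Since that intersection is empty, $I$ is contained in no maximal ideal of $\F R$, forcing $I=\F R$; consequently $J_1+\cdots+J_{k(n+1)}=\F R$, which already supplies the final assertion of the proposition. To conclude $\OP{EL}(n,\F R)\subseteq \langle\langle S\rangle\rangle$, write any $x\in \F R$ as $x=j_1+\cdots+j_{k(n+1)}$ with $j_i\in J_i\subseteq \C E(S,4)$, so that
\[
E_{1,n}(x)=E_{1,n}(j_1)\cdots E_{1,n}(j_{k(n+1)}) \in B_S\bigl(4k(n+1)\bigr)\subseteq \langle\langle S\rangle\rangle,
\]
and Lemma~\ref{L:conjugate elementary matrices}(\ref{L:conjugate elementary matrices:1}) then propagates this to every $E_{i,j}(x)$. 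I do not anticipate any genuine obstacle: all the substantive work has already been carried out in Proposition~\ref{P:prime ideals of A in E} and Corollary~\ref{C:empty intersection of Pi}, and the present proposition is essentially a clean repackaging of them.
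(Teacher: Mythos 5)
Your proof is correct and follows essentially the same route as the paper's: the forward direction via the observation that $S\subseteq \SL(n,\F p)$ forces $\langle\langle S\rangle\rangle$ into a proper normal subgroup missing $E_{1,n}(1)$, and the backward direction via Corollary~\ref{C:empty intersection of Pi} together with the fact that $I$ cannot be proper. The only cosmetic difference is that you produce every $E_{1,n}(x)$ directly and invoke Lemma~\ref{L:conjugate elementary matrices}(\ref{L:conjugate elementary matrices:1}), whereas the paper obtains $E_{1,n}(1)$ and applies part~(\ref{L:conjugate elementary matrices:2}); both are equally valid.
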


\begin{proof}   
Suppose that $\F p \in \bigcap_{A \in S} \Pi(A)$.
Then $S$ is contained in the normal subgroup $\SL(n,\F p)$ (see Definition \ref{D:slnp-and-piA}) and in particular $E_{1,n}(1) \notin \langle \langle S \rangle \rangle$.
Conversely, suppose that $\bigcap_{A \in S} \Pi(A)=\emptyset$.
Let $I$ and $J_1,\dots,J_{k(n+1)}$ be as in Corollary \ref{C:empty intersection of Pi}.
Then $I$ is not contained in any $\F p \in \C M(\F R)$, so $I=\F R$.  Hence $E_{1,n}(1)\in \langle \langle S \rangle \rangle$.  Lemma~\ref{L:conjugate elementary matrices}(\ref{L:conjugate elementary matrices:2}) implies that $\langle \langle S \rangle \rangle \supseteq  \OP{EL}(n,\F R)$.
\end{proof}

\begin{proof}[Proof of Theorem \ref{T:slnr main theorem}]
Suppose that $S$ normally generates $\SL(n,\F R)$ and $|S|=k$.
Then $\bigcap_{A \in S} \Pi(A) = \emptyset$ by Proposition \ref{P:S conjugation generates:revised} and $\F R \subseteq \C E(A,4k(n+1))$ by Remark \ref{R:easy facts on E}, so $\OP{EL}(n,\F R)\subseteq B_A(4k(n+1))$ by Lemma~\ref{L:conjugate elementary matrices}(\ref{L:conjugate elementary matrices:1}).
Lemma \ref{L:BXn properties}\ref{L:BXn:assoc} shows that $\|\SL(n,\B R)\|_S \leq 4k(n+1)C_n$ and since $S$ was arbitrary, this proves the inequality on the right.

To prove the inequality on the left choose some $k \geq 1$.
Let $\F p_1 , \F p_2, \ldots ,\F p_k$ be distinct maximal ideals generated by $p_1,p_2,\dots,p_k \in \F R$.
For any $1 \leq i \leq k$ let $r_i$ be the product of all the elements $p_j$ except $p_i$.
For any $1 \leq i \leq k$ set $A_i=E_{1,n}(r_i)$.
Then
\[
\Pi(A_i)= \{ \F p_j \ :\ j \neq i\}.
\]
By Proposition \ref{P:S conjugation generates:revised} and the hypothesis on $\OP{EL}(n,\F R)$ the set $S=\{ A_1,\dots,A_k\}$ normally generates $\OP{SL}(n,\F R)$.
It remains to show that $\| \OP{SL}(n,\F R)\|_S \geq k$, or equivalently that $B_S(k-1) \subsetneq \SL(n,\F R)$.
We will show that $E_{1n}(1) \notin B_S(k-1)$.
If $X \in B_S(k-1)$ then $X=X_1\cdots X_{k-1}$ where each $X_i$ is conjugate to an element of $S$ or its inverse, hence $\Pi(X_i)$ is a subset of size $k-1$ of $\{ \F p_1,\dots,\F p_k\}$.
Therefore $\bigcap_{i=1}^{k-1} \Pi(X_i)$ is not empty and by Remark \ref{R:PiA}\ref{R:PiA:prod} $\F p_i \in \Pi(X)$ for some $i$, so in particular $X \neq E_{1n}(1)$.
\end{proof}

\begin{remarknonum}
It is instructive to show directly that $G=\OP{SL}(n,\B Z)$ is not uniformly bounded by following the last part of the proof.
Given primes $p_1,\dots,p_k$, set $r_i=p_1 \dots \widehat{p_i} \dots p_k$ and $g_i=E_{1,n}(r_i)$.
By the Chinese remainder theorem $E_{1,n}(1)=g_1^{f_1} \dots g_k^{f_k}$ for some $f_1,\dots,f_k$ so Lemma \ref{L:conjugate elementary matrices}(\ref{L:conjugate elementary matrices:2}) and Carter-Keller's result \cite{MR704220} show that $S=\{ g_1,\dots,g_k\}$ normally generates $G$.
However, any product of $k-1$ elements of $S$ must be contained in the congruence subgroup $\SL(n,p_i\B Z)$ for some $i$ so cannot be $E_{1,n}(1)$ and in particular  $\| G\|_S \geq k$.
\end{remarknonum}



\begin{proposition}\label{P:bound SLnR after Carter-Keller}
Let $\F R$ be a p.i.d and
let $m \geq 2$.
If $\OP{EL}(m,\F R)$ normally generates $\SL(m,\F R)$  and  $\| \SL(m,\F R)\|_{\OP{EL}(m,\F R)} \leq C$ then $\SL(n,\F R)$ is normally generated by $\OP{EL}(n,\F R)$ for any $n \geq m$ and $\| \SL(n,\F R) \|_{\OP{EL}(n,\F R)} \leq C+4(n-m)$.
\end{proposition}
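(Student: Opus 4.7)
The plan is to induct on $n \geq m$, with base case $n = m$ being the hypothesis. For the inductive step ($n \geq m+1 \geq 3$), I aim to show $\|A\|_{\OP{EL}(n,\F R)} \leq C + 4(n-m)$ for every $A \in \SL(n,\F R)$; the generation claim then follows automatically. Everything reduces to a single one-step claim: for each $A \in \SL(n,\F R)$, produce four conjugates of elementary matrices $X_1,X_2,X_3,X_4 \in \SL(n,\F R)$ and some $A' \in \SL(n-1,\F R)$ (viewed in $\SL(n,\F R)$ via $A' \mapsto \OP{diag}(A',1)$) such that $A = X_1 X_2 \cdot A' \cdot X_3 X_4$. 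Since the embedding $\SL(n-1,\F R) \hookrightarrow \SL(n,\F R)$ carries $\OP{EL}(n-1,\F R)$ into $\OP{EL}(n,\F R)$ and is compatible with conjugation, the inductive hypothesis yields $\|A'\|_{\OP{EL}(n,\F R)} \leq \|A'\|_{\OP{EL}(n-1,\F R)} \leq C + 4(n-1-m)$, whence $\|A\|_{\OP{EL}(n,\F R)} \leq 4 + C + 4(n-1-m) = C + 4(n-m)$.

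To produce the four factors I plan to use two transvections acting on the left to reduce the last column of $A$ to $e_n$, and two more transvections acting on the right (chosen to fix $e_n$, so as not to disturb the reduced column) to reduce the resulting last row to $e_n^T$. Here by a \emph{transvection} I mean a matrix $I + xy^T$ in $\SL(n,\F R)$ with $y^T x = 0$. A key preparatory observation is that over any p.i.d.\ every nontrivial transvection is a conjugate of some elementary matrix $E_{i,j}(r)$: writing $x = d_x x'$ and $y = d_y y'$ with $x', y'$ primitive (so $x'^T y' = 0$), one first conjugates by an element of $\SL(n,\F R)$ sending $x'$ to $e_1$, and then by an element of the embedded $\SL(n-1,\F R)$ fixing $e_1$ that sends the resulting tail of $y$ to $e_2$; this exhibits the original transvection as a conjugate of $E_{1,2}(d_x d_y)$. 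Each of my four factors therefore has $\OP{EL}(n,\F R)$-norm at most $1$.

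For the column reduction, set $v = Ae_n$, which is primitive since $\det A = 1$. Because $\F R$ is a p.i.d.\ its Bass stable range is at most $2$, so (using $n-1 \geq 2$) there exist $b_1,\ldots,b_{n-1} \in \F R$ such that $(v_1 + v_n b_1, \ldots, v_{n-1} + v_n b_{n-1})$ is primitive in $\F R^{n-1}$. The transvection $Y_1 := I + z e_n^T$ with $z = (b_1,\ldots,b_{n-1},0)^T$ then sends $v$ to a vector $v^{(1)}$ whose first $n-1$ entries are primitive, and consequently $e_n - v^{(1)}$ is itself primitive. A second transvection $Y_2 := I + (e_n - v^{(1)}) y_2^T$, with $y_2$ chosen so that $y_{2,n} = 1$ and $y_2^T v^{(1)} = 1$ (solvable by primitivity of $v^{(1)}$), automatically satisfies $y_2^T(e_n - v^{(1)}) = 0$ and $Y_2 v^{(1)} = e_n$. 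Now $B := Y_2 Y_1 A$ satisfies $Be_n = e_n$, so $B_{n,n} = 1$; for the row reduction, $Y_3 := E_{n,1}(1 - B_{n,1})$ is an elementary matrix fixing $e_n$ that makes the $(n,1)$-entry of $BY_3$ equal to $1$, and a final transvection $Y_4 = I + e_n y_4^T$ with $y_{4,n} = 0$ (which therefore also fixes $e_n$) cleans up the remaining first $n-1$ entries of the last row. Setting $X_1 = Y_1^{-1}$, $X_2 = Y_2^{-1}$, $X_3 = Y_4^{-1}$, $X_4 = Y_3^{-1}$ and $A' := Y_2 Y_1 A Y_3 Y_4 \in \SL(n-1,\F R)$ completes the construction.

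The hard step is the two-transvection column reduction; this is where the p.i.d.\ hypothesis genuinely enters, via Bass stable range. The row reduction is comparatively easy because after the column step the entry $B_{n,n}$ already equals $1$, so a single elementary matrix suffices to put a $1$ in the $(n,1)$-slot and the remaining entries of the last row can then be zeroed by one further transvection.
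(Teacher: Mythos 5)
Your proof is correct, and it reaches the stated bound by a genuinely different route from the paper's. The paper also inducts on $n$ and spends four conjugates of elementary matrices per step, but the mechanism differs: it first conjugates $A$ to upper Hessenberg form (Lemma \ref{L:conjugation to Hessenberg}), so that the first column becomes $(a,b,0,\dots,0)$ with $\gcd(a,b)=1$, then left-multiplies by the three explicit matrices $I-be_{2,1}-e_{3,1}$, $I+(1-a)e_{1,3}$ and $I+se_{3,1}+te_{3,2}$ (where $sa+tb=1$), each a single conjugate of an elementary matrix by Lemma \ref{L:conjugate to elementary}, to reach the block form $\left(\begin{smallmatrix} 1 & y \\ 0 & B \end{smallmatrix}\right)$; a fourth conjugate then peels off $\left(\begin{smallmatrix} 1 & y \\ 0 & I \end{smallmatrix}\right)$ and the induction applies to $B$. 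You avoid Hessenberg reduction altogether: you shorten the unimodular last column using the stable range property of $\F R$ (Bass's theorem that a p.i.d.\ has stable rank at most $2$, hence the shortening condition for unimodular $n$-tuples with $n\ge 3$), combined with a transvection lemma --- $I+xy^{T}$ with $y^{T}x=0$ is conjugate to $E_{1,2}(\gcd(x)\gcd(y))$ --- which strictly generalizes the paper's Lemma \ref{L:conjugate to elementary}. The paper's version stays entirely within machinery it has already set up (the Hessenberg reduction of \cite{MR0340283} and Lemma \ref{L:conjugate to elementary}); yours imports one external fact, though for a p.i.d.\ the needed instance of the stable range condition also has a short direct proof via the Chinese Remainder Theorem, and your argument transfers more readily to rings of stable rank at most $2$ where Hessenberg form is unavailable. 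Two minor observations: your $Y_3$ is redundant, since the transvection $I+e_{n}y^{T}$ with $y=(-B_{n,1},\dots,-B_{n,n-1},0)$ already clears the whole last row of $B$ and is itself one conjugate of an elementary matrix, so three factors per step (and the sharper bound $C+3(n-m)$) would suffice; and in degenerate cases some of the $Y_i$ equal the identity, which only improves the count.
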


\begin{proof}  
Use induction on $n$.
The base $n=m$ of the induction is trivial.
We carry out the induction step for $n >m$.
We will write $S=\OP{EL}(n,\F R)$ for short and notice that $n \geq 3$.

Let $A \in \SL(n,\F R)$.
Suppose first that $A$ is a block matrix $\left(\begin{smallmatrix} 1 & y \\ 0 & B \end{smallmatrix}\right)$ where $B \in \SL(n-1,\F R)$.
Then $A=\left(\begin{smallmatrix} 1 & 0 \\ 0 & B \end{smallmatrix}\right) \cdot \left(\begin{smallmatrix} 1 & y \\ 0 & I \end{smallmatrix}\right)$ so $A \in B_S(C+4(n-1-m)+1)$ by the induction hypothesis, Lemma \ref{L:conjugate to elementary} and Lemma \ref{L:BXn properties}\ref{L:BXn:mult}.

Now consider any $A \in \SL(n,\F R)$.
By Lemma \ref{L:conjugation to Hessenberg} we may assume that $A$ is upper Hessenberg.
Say its first column is $(a,b,0,\dots,0)$.
Then $\gcd(a,b)=1$ so $sa+tb=1$ for some $s,t \in \F R$ and one checks that
\[
(I-b e_{2,1} - e_{3,1}) \cdot (I+ (1-a)e_{1,3}) \cdot  (I+s e_{3,1} + t e_{3,2}) \cdot A 
\]
has the form $\left(\begin{smallmatrix} 1 & y \\ 0 & B \end{smallmatrix}\right)$.
It folows that from Lemmas \ref{L:conjugate to elementary} and \ref{L:BXn properties}\ref{L:BXn:mult} that $A \in B_S(C+4(n-1-m)+1+3) = B_S(C+4(n-m))$ and the induction step is complete.
\end{proof}

\begin{proof}[Proof of Corollary \ref{C:Carter Keller implications}]
By \cite{MR704220} 
$\OP{EL}(3,\C O)$ normally generates $\SL(3,\C O)$  and $\|\SL(3,\C O)\|_{\OP{EL}(3,\C O)} \leq 63$.
Also, $\C O$ is a principal ideal domain since the class number of its number field is one, and it has infinitely many maximal ideals since $\B Z \subseteq \C O$ and every maximal ideal of $\B Z$ extends to a maximal ideal of $\C O$.

Apply Proposition~\ref{P:bound SLnR after Carter-Keller} with $m=3$ to deduce that $\SL(n,\C O)$ is normally generated by $\OP{EL}(n,\C O)$ for any $n \geq 3$  and that
$
C_n=\|\SL(n,\C O)\|_{\OP{EL}(n,\C O)} \leq 63+4(n-3)=4n+51
$.
The result follows from Theorem \ref{T:slnr main theorem}.
\end{proof}

We now consider p.i.d's $\F R$ with only finitely many maximal ideals.

\begin{lemma}\label{L:units in finitely many max ideals}
Let $\F R$ be a principal ideal domain which has only finitely many maximal ideals.
Then for any $a, b \in \F R$ such that $\gcd(a,b)=1$, there exists $x \in \F R$ such that $a+bx$ is a unit in $\F R$.
\end{lemma}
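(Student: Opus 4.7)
The strategy is the standard semi-local trick: an element of $\F R$ is a unit if and only if it lies outside every maximal ideal, and with only finitely many maximal ideals present the Chinese Remainder Theorem lets us prescribe the residue of $a+bx$ modulo each of them simultaneously.

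Let $\F p_1,\dots,\F p_d$ be the (finitely many) maximal ideals of $\F R$. Partition $\{1,\dots,d\}$ into
\[
I_1 = \{\,i : b \in \F p_i\,\},\qquad I_2 = \{\,i : b \notin \F p_i\,\}.
\]
For $i \in I_1$, observe that the hypothesis $\gcd(a,b)=1$ forces $a \notin \F p_i$: otherwise $\F p_i \supseteq (a)_{\F R}+(b)_{\F R}=\F R$. Hence $a+bx \equiv a \not\equiv 0 \pmod{\F p_i}$ automatically, for every $x \in \F R$. So no constraint on $x$ is needed at the primes of $I_1$.

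For $i \in I_2$, the residue $\bar b$ is a unit in the field $\F R/\F p_i$, so the condition $a+bx \not\equiv 0 \pmod{\F p_i}$ simply excludes one residue class of $x$ modulo $\F p_i$. Since $|\F R/\F p_i|\geq 2$, we may pick some $x_i \in \F R$ with $a+bx_i \notin \F p_i$. The ideals $\{\F p_i\}_{i \in I_2}$ are pairwise coprime (being distinct maximal ideals), so by the Chinese Remainder Theorem there exists $x \in \F R$ with $x \equiv x_i \pmod{\F p_i}$ for all $i \in I_2$. This $x$ then satisfies $a+bx \notin \F p_i$ for every $i \in I_1 \cup I_2$, and hence $a+bx$ lies outside every maximal ideal of $\F R$, i.e.\ is a unit.

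There is no real obstacle here; the only point to be careful about is verifying that the hypothesis $\gcd(a,b)=1$ handles the primes dividing $b$ for free, which reduces the problem to a CRT argument over the primes where $b$ is invertible. The finiteness of $\C M(\F R)$ is exactly what makes CRT applicable as a single finite system.
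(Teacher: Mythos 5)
Your proof is correct. It is the same semi-local strategy as the paper's: a unit is precisely an element outside every maximal ideal, the coprimality $\gcd(a,b)=1$ disposes of the maximal ideals containing $b$ for free, and the finiteness of $\C M(\F R)$ handles the rest. The only real difference is the finishing move. The paper does not invoke the Chinese Remainder Theorem; instead it writes down an explicit $x$, namely $x=\prod_{i\notin\pi(a)}p_i$ where $p_i$ generates $\F p_i$ and $\pi(r)=\{i: r\in\F p_i\}$, and checks that $a\in\F p_i \iff bx\notin\F p_i$ for every $i$, so that $a+bx$ avoids each $\F p_i$. Your CRT version buys a little extra generality: it never uses that the maximal ideals are principal, so it proves the statement for an arbitrary commutative ring with finitely many maximal ideals, whereas the paper's explicit formula leans on the PID hypothesis to name generators. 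Conversely, the paper's construction is completely explicit and avoids the (small) bookkeeping of solving a congruence system. Both are complete; your one point of care --- that $|\F R/\F p_i|\geq 2$ so one residue class can always be avoided --- is correctly handled.
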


\begin{proof}
Let $p_1,\dots,p_k$ be generators of the maximal ideals $\F p_1,\dots,\F p_k$ of $\F R$.
For any $r \in \F R$ set $\pi(r)=\{i \ :\ r \in \F p_i\}$.
Since $\gcd(a,b)= 1$ it follows that $\pi(a) \cap \pi(b)=\emptyset$.
Set $x=\prod_{i \notin \pi(a)} p_i$.
Then clearly $\pi(xb)=\pi(x)$ and for any $1 \leq i \leq k$ we have $a \in \F p_i \iff bx \notin \F p_i$, hence $a+bx \notin \F p_i$.
Since $a+bx$ does not belong to any maximal ideal it is invertible.
%
\end{proof}

\begin{proposition}\label{P:slnr bounded r local}
Let $\F R$ be a principal ideal domain with only finitely many maximal ideals.
Let $n \geq 1$.
Then $\OP{EL}(n,\F R)$ normally generates $\OP{SL}(n,\F R)$  and $\| \OP{SL}(n,\F R) \|_{\OP{EL}(n,\F R)} \leq 3(n-1)$.
\end{proposition}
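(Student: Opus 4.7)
The plan is to proceed by induction on $n$, showing for each $n \geq 2$ that any $A \in \OP{SL}(n, \F R)$ factors as a product of three conjugate elementary matrices times an element $B$ of $\OP{SL}(n-1, \F R)$ (embedded as the bottom-right $(n-1) \times (n-1)$ block). The base case $n = 1$ is trivial. For $n \geq 2$, applying the inductive hypothesis to $B$---and using that conjugate elementary matrices of $\OP{SL}(n-1, \F R)$ are conjugate elementary matrices of $\OP{SL}(n, \F R)$ under the standard embedding---yields $\|A\|_{\OP{EL}(n,\F R)} \leq 3 + 3(n-2) = 3(n-1)$.

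The key technical observation is that any matrix of the form $I + vw^T \in \OP{SL}(n, \F R)$, with $v, w \in \F R^n$, $v^T w = 0$, and $vw^T \neq 0$, is itself a single conjugate elementary matrix: being unipotent with one non-trivial Jordan block, it is conjugate to $E_{1,2}(c)$ for an appropriate $c \in \F R$, and the conjugating element can be realized in $\OP{SL}(n, \F R)$ via Lemma \ref{L:orbits of sln}. Left (resp.\ right) multiplication by such a mat
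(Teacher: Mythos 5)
Your inductive skeleton --- peel off one row and one column at a cost of three conjugates of elementary matrices, then absorb the remaining $\OP{SL}(n-1,\F R)$ block by induction --- is exactly the paper's, and your observation that a unipotent rank-one perturbation $I+vw^T$ with $w^Tv=0$ is a single conjugate of an elementary matrix is a correct mild generalisation of Lemma~\ref{L:conjugate to elementary} (reduce $w$ to $(\gcd(w),0,\dots,0)$ by Lemma~\ref{L:orbits of sln} and apply the column version of that lemma). However, the proposal stops precisely before the only non-routine step: you assert, without proof, that every $A\in\OP{SL}(n,\F R)$ factors as three such rank-one factors times a block matrix $\left(\begin{smallmatrix}1&0\\0&B\end{smallmatrix}\right)$, and nowhere do you invoke the hypothesis that $\F R$ has only finitely many maximal ideals. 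That hypothesis cannot be avoided: by Remark~\ref{R:E does not generate SLn} there are p.i.d.'s for which $\OP{EL}(n,\F R)$ does not even normally generate $\OP{SL}(n,\F R)$ for any $n\geq 2$, whereas your factorisation claim, applied inductively, would give normal generation with diameter $3(n-1)$ over an arbitrary p.i.d. Compare Proposition~\ref{P:bound SLnR after Carter-Keller}, where over a general p.i.d.\ the analogous reduction costs four elementary factors per step and, more importantly, requires a nontrivial base case such as Carter--Keller's theorem for $\OP{SL}(3,\C O)$.

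The missing ingredient is Lemma~\ref{L:units in finitely many max ideals}: because $\F R$ has only finitely many maximal ideals, any coprime pair $a,b$ admits $x$ with $a+bx$ a unit. Concretely, after conjugating $A$ into upper Hessenberg form (Lemma~\ref{L:conjugation to Hessenberg}) its first column is $(a_{1,1},a_{2,1},0,\dots,0)$ with $\gcd(a_{1,1},a_{2,1})=1$; one row operation $E_{2,1}(x)$ makes the $(2,1)$ entry a unit $u$, a conjugation by a suitable $E_{1,2}(\ast)$ (which costs nothing) normalises the $(1,1)$ entry to $1$, and one further row operation $E_{2,1}(-u)$ clears the column, leaving $\left(\begin{smallmatrix}1&\ast\\0&Q\end{smallmatrix}\right)$ whose residual top row is the third elementary factor via Lemma~\ref{L:conjugate to elementary}. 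Until you supply this (or an equivalent) argument, the count of three factors per step --- and hence the bound $3(n-1)$ and the normal generation statement itself --- has no justification.
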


\begin{proof} 
We use induction on $n$; the case $n=1$ is a triviality since $\SL(1,\F R)$ is trivial (and $\OP{EL}(1,\F R)=\emptyset$).
Assume that $n \geq 2$ and let $A \in \SL(n,\F R)$.
It acts in the standard way on the set of column vectors $\F R^n$ with the standard basis $e_1,\dots,e_n$.
By Lemma \ref{L:conjugation to Hessenberg} we may assume that $A$ is upper Hessenberg.
By Lemma \ref{L:units in finitely many max ideals} there is some $x \in \F R$ such that $B=E_{2,1}(x) \cdot A$ is upper Hessenberg with the entry $b_{21}$ a unit.
Set $U=E_{1,2}(b_{21}^{-1}(b_{11}-1))$ and $C=UBU^{-1}$.
By inspection the first column of $C$ is $(1,b_{21},0,\dots,0)$.
Then $D=E_{2,1}(-b_{21}) \cdot C$ is a block matrix $\left(\begin{smallmatrix} 1 & * \\ 0 & Q \end{smallmatrix}\right)$ where $Q \in \SL(n-1,\F R)$. 
Let $F=\left(\begin{smallmatrix} 1 & 0 \\ 0 & Q \end{smallmatrix}\right)$.
Then
$D \cdot F^{-1}$ is conjugate to an elementary matrix by Lemma \ref{L:conjugate to elementary}.
By applying the induction hypothesis to $Q$ it follows that $\|A\|_{\OP{EL}(n,\F R)} \leq \|F\|_{\OP{EL}(n,\F R)}+3 \leq 3(n-2)+3$ and the induction step follows.
\end{proof}

\begin{proof}[Proof of Theorem \ref{T:slnr main theorem:2}]
By Proposition \ref{P:slnr bounded r local}, $\OP{SL}(n,\F R)$ is normally generated by $\OP{EL}(n,\F R)$ and $\| \OP{SL}(n,\F R) \|_{\OP{EL}(n,\F R)} \leq 3(n-1)$.
Let $S \subseteq \SL(n,\F R)$ be normally generating, $|S|=k$.
By Proposition \ref{P:S conjugation generates:revised}, $\F R=J_1+\dots+J_{k(n+1)}$, a sum of ideals in $\C E(S,4)$.
For any $\F p \in \C M(\F R)$ there must exist $J_i$ such that $J_i \nsubseteq \F p$.
Thus, if $d \leq k(n+1)$ then $\F R= J_{i_1}+\dots+J_{i_d} \subseteq \C E(S,4d)$ since $|\C M(\F R)|=d$.
We deduce that $\F R = \C E(S,4 \min\{d,k(n+1)\})$, so $\|\OP{EL}(n,\F R)\|_S \leq 4 \min\{ d, k(n+1)\}$ by Lemma~\ref{L:conjugate elementary matrices}(\ref{L:conjugate elementary matrices:1}).
Then $\|\OP{SL}(n,\F R)\|_S \leq 12(n-1) \cdot \min\{ d,k(n+1)\}$ by Lemma \ref{L:BXn properties}\ref{L:BXn:assoc}.
\end{proof}

\begin{remark}\label{R:E does not generate SLn}
Let $\F R$ be a p.i.d.
In general $\OP{EL}(n,\F R)$ need not normally generate $\SL(n,\F R)$.
To see this, recall that $\OP{SK}_1(\F R)$ (in the sense of algebraic $K$-theory) is the group $\OP{SL}(\F R)/\OP{E}(\F R)$ where $\OP{SL}(\F R)=\bigcup_{n \geq 1} \OP{SL}(n,\F R)$ and $\OP{E}(\F R)$ is the subgroup normally generated by $\bigcup_{n \geq 1}\OP{EL}(n,\F R)$.

Let $\F R$ be the ring $\B Z[T]$ with the polynomials $T$ and $T^m-1$ inverted for all $m \geq 1$.  
This is a p.i.d by \cite{MR601681}.
Also, \cite[Proposition 8]{MR601681} shows that $\OP{SK}_1(\F R) \neq 0$ and therefore $\OP{SL}(n,\F R)$ is not normally generated by $\OP{EL}(n,\F R)$ for all sufficiently large $n$.  
It follows from Proposition~\ref{P:bound SLnR after Carter-Keller} that $\OP{EL}(n,\F R)$ does not normally generate $\OP{SL}(n,\F R)$ for any $n \geq 2$.  
\end{remark}

\begin{remark}
After this paper was submitted, Trost has proved generalisations of Theorem~\ref{T:slnr main theorem} for a wider class of split semisimple Chevalley groups $G$ \cite{trost1,trost2,trost3,trost4}, and has made some progress towards proving analogous results for finite index subgroups of $G$ \cite{trost3}, \cite[Remark 3.8]{trost1}.  He has also established lower bounds involving $n$ for $\Delta_k(G)$ where $G$ is symplectic \cite[Theorem~2]{trost2}; it seems likely that similar bounds can be established for other Dynkin types.
\end{remark}

\section{Applications to finite groups}\label{S:applications to finite groups}

We write $\log x$ to mean $\log_2 x$.

\begin{proposition}\label{P:minimal ccs}
Let $G$ be a finite group with $|G|>3$ and let $S$ be the conjugacy class of some $s \in G$.
If $s$ normally generates $G$ then 
\[
 \log |S| > \frac{\log |G|}{\Delta(G)}- 2.
\]
If $s^{-1} \in S$ then $\log |S| > \frac{\log |G|}{\Delta(G)}- 1$.
\end{proposition}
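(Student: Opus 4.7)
The plan is to bound $|G|$ by the size of a word ball of radius $\Delta(G)$ in the $s$-norm, and then estimate that ball via the multiplicative structure from Lemma \ref{L:BXn properties}.

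Set $N = \Delta(G)$. Since $s$ normally generates $G$, the singleton $\{s\} \in \Gamma_1(G) \subseteq \Gamma(G)$, so $\|G\|_s \leq \Delta_1(G) \leq N$ and thus $B_s(N) = G$. Iterating Lemma \ref{L:BXn properties}\ref{L:BXn:mult} gives $B_s(N) = B_s(1)^N$, so
\[
|G| \;=\; |B_s(N)| \;\leq\; |B_s(1)|^{N}.
\]
Next I would describe $B_s(1)$ explicitly: by definition $B_s(1) = \{1\} \cup \conj_G(s^{\pm 1}) = \{1\} \cup S \cup S^{-1}$, where $S^{-1}$ denotes the conjugacy class of $s^{-1}$. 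Since $S$ and $S^{-1}$ are conjugacy classes, they either coincide (precisely when $s^{-1} \in S$) or are disjoint; in particular $|S^{-1}| = |S|$ and
\[
|B_s(1)| \;=\; \begin{cases} 1 + |S| & \text{if } s^{-1} \in S, \\ 1 + 2|S| & \text{otherwise.} \end{cases}
\]

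For the first (weaker) inequality, I argue uniformly: since $|S| \geq 1$, one has $1 + 2|S| < 4|S|$, so
\[
\log|G| \;\leq\; N\log\bigl(1+2|S|\bigr) \;<\; N\bigl(2 + \log|S|\bigr),
\]
which rearranges to $\log|S| > \log|G|/\Delta(G) - 2$.

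For the second inequality, assuming $s^{-1} \in S$, I first rule out $|S| = 1$. Indeed, $|S| = 1$ forces $s \in Z(G)$, hence $G = \langle\langle s \rangle\rangle = \langle s\rangle$ is cyclic; combined with $s = s^{-1}$ this gives $|G| \leq 2$, contradicting $|G| > 3$. So $|S| \geq 2$, and therefore $1 + |S| < 2|S|$, yielding
\[
\log|G| \;\leq\; N\log(1+|S|) \;<\; N\bigl(1 + \log|S|\bigr),
\]
i.e.\ $\log|S| > \log|G|/\Delta(G) - 1$. The only mild subtlety — and my expected main obstacle — is the edge-case analysis needed to secure strict inequality (rather than merely $\geq$); everything else is essentially counting products in $B_s(1)^N$.
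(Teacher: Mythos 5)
Your proof is correct and follows essentially the same route as the paper's: both bound $|G|$ by $|B_s(1)|^{\Delta(G)}$ with $B_s(1)=\{1\}\cup S\cup S^{-1}$ and then take logarithms. The only difference is how strictness is secured --- the paper observes that the multiplication map $B_s(1)^{\Delta(G)}\to G$ has a fibre of size at least $2$ (using $\Delta(G)\geq 2$, which follows from $|G|>3$), whereas you use the elementary strict bounds $1+2|S|<4|S|$ and, after ruling out $|S|=1$, $1+|S|<2|S|$; both devices are valid.
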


\begin{proof}
Clearly, $B_S(1)=\{1\} \cup S \cup S^{-1}$, and $B_S(1) \subsetneq G$ since $|G|> 3$.  
Hence $\Delta(G)> 1$.  
Since $|G|_S \leq \Delta(G)$, we obtain a surjective function $B_S(1)^{\Delta(G)} \to G$.
The preimage of $1 \in G$ contains at least $2$ elements, so $(1+|S \cup S^{-1}|)^{\Delta(G)} > |G|$.
This gives $2|S \cup S^{-1}| > |G|^{1/\Delta(G)}$ and the result follows.
\end{proof}

\begin{example}\label{E:PSL(n,q) ccs}
Let $G=\PSL(n,\B F_q)$ where $n \geq 3$.
By Theorem \ref{T:slnr main theorem:2} and Lemma \ref{L:finite gen quotient not strongly bounded}\ref{I:finite gen quotient not strongly bounded:2}, $\Delta(G) \leq 12(n-1)$, and the simplicity of $G$ implies that any non-trivial $s \in G$ normally generates.
Proposition \ref{P:minimal ccs} shows that if $S \subseteq G$ is any non-trivial conjugacy class then
\[
\log |S| > \frac{\log |G|}{ \Delta(G)}- 2 \geq
\frac{\log(\frac{q^{n(n-1)/2}(q^n-1)(q^{n-1}-1)\dots(q^2-1)}{\gcd(n, q-1)})}{12(n-1)}- 2.
\]
For any $x \geq 2$ we have $\log(x-1) \geq \log x-\frac{c}{x-1}\geq \log x- \frac{2c}{x}$ where $c= \ln(2)^{-1}$. 
Since $\frac{2c}{q^2}+\frac{2c}{q^3}+\dots \leq 2$ and since $q \geq \gcd(n,q-1)$, we may continue the inequality above
\begin{multline*}
\geq  
\frac{\frac{1}{2}n(n-1)\log q + \sum_{k=2}^n (k \log q -\frac{2c}{q^k}) - \log q}{12(n-1)} -2  \\
=
\frac{\log q}{12}(n+1) - \frac{2+\log q}{12(n-1)} -2.
\end{multline*}
\end{example}

\begin{proof}[Proof of Proposition \ref{P:slnzl ccs}]

The localised ring $\F R:= \B Z_{(p_1,\dots,p_k)}$ has exactly $k$ prime ideals generated by $p_1,\dots,p_k \in \B Z$.
We claim that reduction modulo $\ell$ gives rise to an epimorphism $\pi \colon \OP{SL}(n,\F R) \to \OP{SL}(n,\B Z/\ell)$.
To see this, for any $A \in \OP{SL}(n,\B Z/\ell)$ choose a matrix $\hat{A}$ with entries in $\B Z$ such that $\hat{A}\!\! \mod \ell =A$.
Set $u=\det(\hat{A})$.
Then $u=1\!\! \mod \ell$ so $u \in \F R^\times$.
Therefore $\hat{A} \cdot \OP{diag}(u^{-1},1,\dots,1) \in \OP{SL}(n, \F R)$  is a preimage of $A$.
Lemma \ref{L:finite gen quotient not strongly bounded}\ref{I:finite gen quotient not strongly bounded:2} and Theorem \ref{T:slnr main theorem:2} give $\Delta(\OP{SL}(n,\B Z/\ell)) \leq 12k(n-1)$.

Propositions \ref{P:slnr bounded r local} and  \ref{P:S conjugation generates:revised} imply that a matrix $A \in \SL(n,\ZZ/\ell)$ with $A \mod p_i$ non-scalar normally generates $\OP{SL}(n,\B Z/\ell)$.
Proposition \ref{P:minimal ccs} gives the lower bound on the size of the conjugacy class of $A$.
\end{proof}

\bibliography{bibliography}
\bibliographystyle{plain}

\end{document}